\numberwithin{equation}{section}
\newcommand{{\R}}{\mathbb R}
\newtheorem{theorem}{Theorem}[section]
\newtheorem{corollary}[theorem]{Corollary}
\newtheorem{remark}[theorem]{\it \rmfamily Remark}
\newtheorem{example}[theorem]{\it \rmfamily Examples}
\newtheorem{lemma}[theorem]{Lemma}
\newtheorem{hypothesis}{\it \rmfamily Hypothesis}
\newtheorem{proposition}[theorem]{Proposition}
\newcommand{\bth}{\begin{theorem}}
\newcommand{\bpr}{\begin{proposition}}
\newcommand{\epr}{\end{proposition}}
\def\P{{\mathbb P}}
\def\R{{\mathbb R}}
\def\Q{{\mathbb Q}}
\def\lan{{\langle}}
\def\ran{{\rangle}}
\def\hh{{\vskip 0.5 mm \noindent }}
\def\qed{\hfill \hbox{\hskip 6pt\vrule
width6pt height6pt depth1pt  \hskip1pt}
\smallskip}
\begin{document}

\title {
 {\bf   Davie's type uniqueness 
for a class of SDEs with jumps }
}

\author{  Enrico Priola
  \footnote{ E-mail:  enrico.priola@unito.it. 
}
 \\ \\ { Dipartimento di Matematica ``Giuseppe Peano''} \\  
{Universit\`a di Torino}
 \\ { via Carlo Alberto 10,   Torino,  Italy} }

\date{}

\maketitle

\noindent {\bf Abstract:} A  result of A.M. Davie [Int. Math. Res. Not. 2007] 
states that a multidimensional stochastic equation $dX_t = b(t, X_t)\,dt + dW_t$, $X_0=x$, driven by a Wiener process  $W= (W_t)$ with a coefficient  $b$ which is only bounded and measurable has a unique solution  for almost all choices of the driving Wiener path. 
We consider a similar problem when $W$ is replaced by a L\'evy process $L= (L_t)$ and $b$
is $\beta$-H\"older continuous   in the  space variable,
$ \beta \in (0,1)$. 
We 
assume that $L_1$
has a finite 
moment of order $\theta$, for some ${\theta}>0$. Using also a new c\`adl\`ag regularity result for strong solutions, we prove that  strong existence and uniqueness for the SDE together with  $L^p$-Lipschitz continuity of the strong  solution with respect to  $x $ imply a Davie's type uniqueness result for almost all choices of the L\'evy path.
We apply this  result to 
a class of SDEs driven by non-degenerate $\alpha$-stable L\'evy processes, $\alpha \in (0,2)$ and $\beta > 1 - \alpha/2$.

\vspace{2.7 mm}

\noindent {\bf Keywords:}  stochastic differential
equations - L\'evy processes - path-by-path uniqueness - H\"older
continuous drift.

\vspace{2.7 mm}

 \noindent {\bf Mathematics  Subject Classification (2010):} 
60H10, 60J75, 34F05.

\section{Introduction}

In \cite{D} A.M. Davie has proved 
that  a  SDE $dX_t = b(t, X_t)\,dt + dW_t$, $X_0=x \in \R^d$, driven by a Wiener process  $W$ and having a coefficient  $b$ which is only bounded and measurable has a unique solution  for almost all choices of the driving Wiener path. This type of uniqueness is also called 
{\sl path-by-path uniqueness}. 
In other words, 
 adding  a single  path of a Wiener process  $W = (W_t)$ $=
  (W_t)_{t \ge 0}$  regularizes a singular  ODE whose right-hand side $b$ is only  bounded and measurable.  

 We consider a similar uniqueness problem   for SDEs driven by L\'evy noises  with  H\"older continuous    drift term $b$, i.e.,
we deal  with 
\begin{equation} \label{SDE}
 X_{t}(\omega) = x + \int_{s}^{t}b\left(r, X_{r}(\omega)\right)  dr \, + \, L_{t}(\omega) - L_s(\omega), \;\; \; t \in [s,T], 
 \quad
\end{equation}
where  $T>0,$ $s \in [0,T]$,  $x \in {\mathbb R}^d,$ $d \ge 1$,  $b: [0,T] \times {\mathbb R}^d \to {\mathbb R}^d $ is measurable, bounded and
 $\beta$-H\"older continuous in the $x$-variable, uniformly in $t$, $\beta \in (0,1]$.
Moreover   $L=
 (L_t) $ is a 
 $d$-dimensional   L\'evy process defined on a  probability space $(\Omega, {\cal F}, P)$ and $\omega \in \Omega$ (see Section 2; recall that $L_0 =0$, $P$-a.s).
Suppose that   $E [ |L_1|^{\theta} ] < \infty$ for some $\theta >0$
(cf. Hypothesis \ref{zero}). Assuming  that, for any $x \in {\mathbb R}^d$, $s \in [0,T]$,
  strong existence and uniqueness hold for \eqref{SDE}  together with  $L^p$-Lipschitz continuity of the strong solution $(X_t^{s,x})$ with respect to  
$x$, i.e.,  
\begin{equation} \label{nonva} 
\sup_{s \in [0,T]} E \big [\displaystyle{ \sup_{s \le r \le T}} |X_r^{s,x} - X^{s,y}_r|^p \big] \le C \,
 |x-y|^p,\;\;\; x,\, y \in {\mathbb R}^d, \;\; p \in [2, \infty)
\end{equation} 
(cf. Hypothesis \ref{primo} and Section 2) we prove   the following 
result (cf. Theorem \ref{d32})
\begin{theorem} \label{main1}
Assume Hypotheses \ref{primo} and \ref{zero}.
There exists an  event $\Omega' \in {\cal F}$ with $P(\Omega') =1$ such that for any $\omega \in \Omega'$, $x \in {\mathbb R}^d$, the integral equation
\begin{equation} \label{davi}
 f(t) = x + \int_{0}^{t}b\left(r, f(r) + L_{r}(\omega)\right)  dr,\;\;\; t \in [0,T], 
\end{equation} 
has exactly one solution $f$ in $C([0,T];{\mathbb R}^d)$. 
\end{theorem} 
The assumptions and the uniqueness property are clear when $\beta=1$ (the Lipschitz case). When $\beta \in (0,1)$
the result is a special case of assertion (v) in Theorem \ref{d32} which also considers $s \not =0$. It turns out that   
 $f(t) = \phi(0,t,x,\omega) - L_t(\omega) $, $t \in [0,T]$, where 
$( \phi(s,t,x, \cdot))$ is a particular strong solution to \eqref{SDE}.
In Section 6 we will apply the previous theorem to  
a class of SDEs driven by non-degenerate $\alpha$-stable type L\'evy processes, $\alpha \in (0,2)$,
assuming as in \cite{Pr10} that $\beta \in \big(1- \frac{\alpha}{2},1 \big)$.
 Note that 
we can also treat locally H\"older  drifts $b(x)$ 
by a localization procedure (see Corollaries \ref{unb} and \ref{cons}).
 These uniqueness results seem to be new even in dimension one. For instance,  one  can consider  
$$
 dX_t = \sqrt{|X_t|} \, dt + dL_t^{(\alpha)}, \;\; X_0 =x \in \R,
$$
with a symmetric 
 $\alpha$-stable process $L^{(\alpha)} = (L_t^{(\alpha)})$, $\alpha >1$,
and prove that for almost all $\omega \in \Omega$
there exists at most one  
solution for \eqref{davi} with $b(r,x) = \sqrt{|x|}$ and $L = L^{(\alpha)}$.

As already mentioned 
 when $L = W$ is a standard Wiener process, Theorem \ref{main1}  is a special case of Theorem 1.1 in \cite{D}. 
 Recall that  Davie's uniqueness  is  stronger then  the usual pathwise uniqueness  considered in the literature on SDEs   (cf. Remark \ref{patho} and see also \cite{F}).  Pathwise uniqueness deals with solutions which are  adapted stochastic processes and does not consider solutions corresponding to   single paths $(L_t(\omega))_{t \in [0,T]}$.  
 When  $L = W$ 
several results on strong existence and pathwise uniqueness are known for the SDE \eqref{SDE} with very irregular drift $b$: the seminal paper  \cite{Ver}  deals with  $b$ as in the Davie's result;  further recent  results  consider   $b$ which is  only locally in some  $L^p$-spaces (see also \cite{GM}, \cite{KR05} and \cite{FF}).

When 
$L$   is a stable type L\'evy process,  the SDE \eqref{SDE}  with  a H\"older continuous and bounded  drift 
$b$  and  its associated integro-differential generator ${\cal L}_b$ (cf.  \eqref{dee1}) has  received a lot of attention
(see, for instance, \cite{Tanaka}, \cite{Pr10}, \cite{Si},  \cite{SVZ},
 \cite{BDMZ}, \cite{Pr14},
\cite{CSZ} 
and the references therein). On this respect in   Theorem 3.2  of  \cite{Tanaka} the authors proved  that  when $d=1$ 
 and $L$ is a symmetric 
 $\alpha$-stable process, $\alpha \in (0,1)$, pathwise uniqueness 
 may fail      even with a
   $\beta$-H\"older continuous $b$    if
  $\alpha + \beta <1$.   
 
Let us come back to Davie's theorem.
The proof in \cite{D} is self-contained but very  technical;
it relies on explicit computations with Gaussian kernels. An alternative approach to the Davie uniqueness result   has been proposed in 
\cite{Sh} (see in particular Theorems 1.1 and 3.1 in \cite{Sh}).
 This approach uses the flow property of strong solutions of SDEs driven by the Wiener process.
 Beside \cite{D} 
our work   has been inspired by 
 Theorem 3.1 in \cite{Sh} which  deals  with drifts $b$  possibly unbounded in time and such that $b(t, \cdot)$ is H\"older continuous. We mention that applications of  Davie's uniqueness to  Euler approximations for \eqref{SDE} are given  in Section 4 of \cite{D}.

In our proof we use  $L^p$-estimates  \eqref{nonva} which are well-known when $L = W$  (they can be easily deduced from  Section 2  in \cite{FGP}). They are even true for more general drifts $b$   (i.e., $b \in L^{q}(0,T; L^p(\R^d; \R^d))$,  $d/p  + 2/q <1$, $p \ge 2$, $q>2$, see formula (5.9) and  Proposition 5.2 in \cite{FF}). Moreover, when $L$ is a symmetric non-degenerate  $\alpha$-stable process, $b(t,x)=b(x)$,  $\alpha \ge 1$ and $\beta \in (1- \frac{\alpha}{2},1]$,  such estimates  follow   by    Theorem 4.3 in \cite{Pr10}   (see  Theorem  \ref{uno11} for a more general case).

 By the     $L^p$-estimates  \eqref{nonva},  passing through different modifications (see Sections 3 and 4), 
we finally  obtain a   suitable  strong solution  $\phi(s,t,x, \omega)$ (see Theorem \ref{d32}) which  solves  \eqref{SDE} for any $\omega \in \Omega'$, for  some almost sure event  $\Omega'$ which is independent on $s,$ $t$ and $x$.  Such   solution $\phi$ is    used to prove uniqueness of \eqref{davi} (see the proof of (v) of Theorem \ref{d32}).
 We also   establish 
 c\`adl\`ag regularity of $\phi$ with respect to  $s$, uniformly in $t \in [0,T]$ and  $x$, when $x$ varies in compact sets of $\R^d$. This result seems to be new even when $d=1$ and $b$ is Lipschitz continuous  if $L$ is not the Wiener process $W$
(when $L=W$,  the continuous dependence on  $s$, uniformly in $x$, 
has been proved in Section 2 of  \cite{IS} for SDEs with  Lipschitz coefficients). We also prove the continuous dependence of $\phi (s,t, x, \omega)$ with respect to $x$ and the flow property, for any $\omega \in \Omega'$ (see assertions (iii) and (iv) in Theorem \ref{d32}).
There  are   recent papers  on the flow property for solutions to SDEs with jumps (see, for instance, \cite{Pr14}, \cite{LM}, \cite{CSZ} and the references therein). However they do not prove the previous assertions on $\phi$. 
  
Remark that when  $L=W$ and $b(t, \cdot)$ is H\"older continuous as in \eqref{SDE}, proving the existence of  a  regular strong solution like $\phi$
 is   easier.
 Indeed in such case one can use  the well-known Kolmogorov-Chentsov continuity test to get a continuous dependence on $(s,t,x)$. More precisely, when $L=W$, we can apply  the Zvonkin method of \cite{Ver} or the related It\^o-Tanaka trick   of \cite{FGP} and, using    a suitable regular solution $u(t,x)$ of a related Kolmogorov equation (cf. Section 6.2), find that  the process  $\big (u(t, X_t^x)\big)$ solves 
an auxiliary  SDE with Lipschitz continuous coefficients. 
On this auxiliary equation one can perform the Kolmogorov-Chentsov  test as in \cite{Ku84}  and finally obtain the required regular modification of the strong solution.
To get our regular  strong solution $\phi$ we do not pass through 
 an auxiliary  SDE but work directly on \eqref{SDE} using  first a result  in \cite{IS} and then   a  c\`adl\`ag criterion given in \cite{BF}. We apply   this criterion     to a suitable stochastic process with values in a space of  continuous functions defined on $\R^d$ (see Theorem \ref{modi12}). This approach could  be also useful to study regularity properties of solutions to SDEs with multiplicative noise.

In Section 6  we apply Theorem \ref{d32} to a class of SDEs driven by non-degenerate $\alpha$-stable type L\'evy processes, 
 using also results in  \cite{Pr10}  and \cite{Pr14}.
In particular  we prove a Davie's type uniqueness result for \eqref{SDE} when $L$ is a standard rotationally invariant $\alpha$-stable process, $\alpha \in (0,2) $ and $\beta \in (1- \frac{\alpha}{2},1]$. The generator of $L$ is the well-known fractional Laplacian $ -(- \triangle)^{\alpha/2}$.
 To cover  the case $\alpha \in (0,1)$ we also need an analytic   result proved   in \cite{Si} (cf. Remark 5.5 in \cite{Pr14}).
 When  $\alpha \in [1,2)$ and $\beta \in (1- \frac{\alpha}{2},1]$ we can treat more general non-degenerate $\alpha$-stable type processes  like relativistic and truncated stable processes and  some  temperated stable processes (cf. \cite{Pr14} with the references therein and see Examples \ref{233}).
  When 
$\alpha \in [1,2)$  we can also consider  the   singular  $\alpha$-stable  process $L = (L_t)$,   $L_t = (L^1_t, \ldots, L^d_t) $, $t \ge 0$, 
where $L^1$,
 $\ldots, L^d$ are independent
 one-dimensional symmetric $\alpha$-stable
 processes; well-posedness of SDEs driven by this process  has recently received particular attention    (see, for instance, \cite{basschen}, \cite{Pr10}, \cite{Za2},   \cite{Pr14}, \cite{CSZ}).

\section {Notations and assumptions }

We fix basic  notations.  
 We refer to \cite{sato}, \cite{Ku}, \cite{K}  and \cite{A} for more 
details on L\'evy processes with values in ${\mathbb R}^d$. 
By   $\lan x, y \ran $ (or $x\cdot y$)  we denote   the euclidean inner
product between $x$ and $y \in {\mathbb R}^d$, for  $d \ge 1$;
  further 
$|x|$ $ = (\lan x,x\ran)^{1/2}$. If  $H \subset {\mathbb R}^d$
 we denote by $1_H$ its indicator function. The
  Borel $\sigma$-algebra of a Borel set  $C \subset {\mathbb R}^k$, $k \ge 1$, is  indicated by
   ${\cal B}(C)$. Similarly if $(S,d)$ is a metric space we denote  its Borel $\sigma$-algebra by ${\cal B}(S)$.
We consider a complete probability space 
  $(\Omega, {\cal F}, P)$. The expectation with respect to $P$ is indicated with $E.$  
 If ${\cal G} \subset {\cal F}$ is a $\sigma$-algebra,  a  random variable $X: \Omega \to S$  with values in a metric space $(S,d)$ which is measurable from $(\Omega, {\cal G})$ into $(S, {\cal B}(S))$
 is called ${\cal G}$-measurable.
Similarly a function $l : [0,T] \times \Omega \to S$ is ${\cal B}([0,T]) \times {\cal F}$-measurable  if $l$ is measurable with respect to the product $\sigma$-algebra  ${\cal B}([0,T]) \times {\cal F}$. 

In the sequel we often need to specify the possible  dependence of  events of probability one from
 some parameters. Recall that
a set   $\Omega' \subset \Omega$ is  an {\it  almost sure event} if $\Omega' \in {\cal F}$ and $P(\Omega')=1$. To stress that   $\Omega'$ possibly depends also on a parameter $\lambda$ we write ${\Omega}_{\lambda}'$ (the almost sure event $\Omega'_{\lambda}$ may change from one proposition to another);  for instance the notation $\Omega_{s,x}$ 
means that the almost sure event $\Omega_{s,x}$ possibly depends also on $s$ and $x$. 
 We say that a property involving   random variables
 holds on an almost sure event $\Omega'$ to indicate  that such property  holds for any $\omega \in \Omega'$ (i.e.,  such property holds $P$-a.s.).

A $d$-dimensional stochastic process $L=(L_t)$ $= (L_t)_{t \ge 0}$,  $d \ge 1$, defined on $(\Omega, {\cal F}, P)$
 is a  {\it  L\'evy  process}
  if    it  has independent and  stationary  increments,      c\`adl\`ag  paths (i.e., $P$-a.s.,  each mapping $t \mapsto L_t (\omega)$ is c\`adl\`ag from $[0, \infty)$ into ${\mathbb R}^d$; we denote by $L_{s-}(\omega)$ the left-limit in $s>0$)
 and $L_0=0$, $P$-a.s..

Similarly to Chapter II in \cite{Ku84} and  Chapter V in \cite{K} we define for $0 \le s < t < \infty$ the $\sigma$-algebra ${\cal F}_{s,t}^L$ as the completion of the $\sigma$-algebra generated by the random variables $L_r - L_s$, $r \in [s,t]$. We also set ${\cal F}_{0,t}^L = {\cal F}^L_t. $ Since $L$ has independent increments we have that $L_v - L_u$ is independent of ${\cal F}_{u}^L$ for $0 \le  u< v$.
 Note that  $(\Omega, {\cal F}, ({\cal F}_t^L)_{t \ge 0}, P)$ is    an example of 
  stochastic basis
   which satisfies
  the usual assumptions (see
   \cite[page 72]{A}). 
    Given a L\'evy process $L$ there exists a unique function $\psi : {{\mathbb R}^d}  \to {\mathbb C}$ such that 
 $$
 E [e^{i \langle h, L_t\rangle}] = e^{- t
\psi(h)},\,\;  h \in {\mathbb R}^d, \; t \ge 0;
$$
$\psi$ is called the {\it exponent}
 of $L $. The {L\'evy-Khintchine formula } for $\psi$  states that
 \begin{gather} \label{levii}
 \psi(h)= \frac{1}{2}\langle Q h,h  \rangle   - i \langle a, h\rangle
- \int_{{\mathbb R}^d} \! \! \big(  e^{i \langle h,y \rangle }  - 1 -  { i \langle h,y
\rangle} \, {1}_{\{ |y| \le 1\}} \, (y) \big ) \nu (dy), 
\end{gather}
 $h \in {\mathbb R}^d,$ where  $Q$ is a symmetric non-negative definite $d \times d$-matrix, $a
\in {\mathbb R}^d$ and $\nu$ is a $\sigma$-finite (Borel)
measure on ${\mathbb R}^d$, such that  
 ${ \int_{{\mathbb R}^d} (1 \wedge |y|^2 ) \, \nu(dy)}$
$ <\infty,$  $\nu (\{ 0\})=0$ ($1 \wedge |y|^2$ $= \min(1, |y|^2)$);
 $\nu$
 is the {\it  L\'evy  measure} (or intensity measure) of $L.$ 
 The triplet  $(Q, \nu, a)$  uniquely identifies the law of $L$ (see Proposition 9.8 in \cite{sato} or Corollary 2.4.21 in \cite{A}).
It is called {\sl generating triplet} (or characteristics) of the L\'evy process $L$. 

\smallskip
 Given two stochastic processes $X= (X_t)_{t \in [0,T]}$
and $Y = (Y_t)_{t \in [0,T]}$ defined on $(\Omega, {\cal F}, P)$ and with values in a metric space $(S,d)$, we say that $X$ is a {\it modification or version} of $Y$ if for any $t \in [0,T]$, $X_t = Y_t$, $P$-a.s.; if in addition both $X$ and $Y$ have c\`adl\`ag paths then, $P (X_t = Y_t,\; t \in [0,T])
=$ $P (X_t = Y_t,\; \text{for any} \,t \in [0,T])=1.$   

\smallskip Let  $L = (L_t)$ be a $d$-dimensional L\'evy process defined on a { complete probability space}  $(\Omega, {\cal F}, P)$, let $s \in [0,T]$  and $x \in \R^d$ and consider the SDE  
\begin{equation}
\label{SDE2}
dX_t = b(t,X_t)dt + dL_t,\;\;\; s \le  t \le T,\;\;\; X_s =x,
\end{equation} 
 with $b : [0,T] \times \R^d \to \R^d$ which is a locally bounded  Borel function. 

 According to  \cite{Ku84}, \cite{Ku} and \cite{situ} we say that an $\R^d$-valued  stochastic process $U^{s,x}$ $=(U_t^{s,x}) =$ $(U_t^{s,x})_{t \in [s,T]}$
 defined on $(\Omega, {\cal F}, P)$
is a {\it strong solution} to \eqref{SDE2} starting from $x$ at time $s$ if,   for any $t \in [s,T]$, 
the random variable  $U_t^{s,x}: \Omega \to \R^d$ 
 is ${\cal F}_{s,t}^L$-measurable; further we require that there exists an almost sure event $\Omega_{s,x}$  (possibly depending also on $s$ and $x$ but independent of $t$) such that the following conditions hold for any $\omega \in \Omega_{s,x}$:
  (i) the map:
  $t \mapsto U_t^{s,x}(\omega)$ is c\`adl\`ag on $[s,T]$;
  (ii)  
  we have
\begin{equation} \label{SDE3} 
 {U}^{s,x}_t (\omega) =  x + \int_s^t b(r, {U}^{s,x}_r(\omega)) dr + L_{t}(\omega) - L_s(\omega), \;\; t \in   [s , T];
\end{equation}
(iii) 
 the path $t \mapsto L_t(\omega)$ is c\`adl\`ag and $L_0(\omega)=0$.

Given a strong solution $U^{s,x}$ we set
for any $0 \le t \le s$, $U_t^{s,x} =x$ on $\Omega$.

\smallskip 
Let us recall some  function spaces used in the paper.
 We consider  $C_{b}(\mathbb{R}
 ^{d};\mathbb{R}^{k})$, for integers
  $k,\,d\geq1$, as the Banach space  of all continuous and bounded functions
 $g:\mathbb{R}^{d}\rightarrow\mathbb{R}^{k}$  endowed with the
 supremum norm $\| g\|_0$ $= \| g\|_{C_b}$ $ = \sup_{x \in {\mathbb R}^d}|g(x)|,$ $g \in
  C_{b}(\mathbb{R}
 ^{d};\mathbb{R}^{k}).$
 Moreover, $C_{b}^{0,\beta}(\mathbb{R}
 ^{d};\mathbb{R}^{k})$, $\beta \in (0,1]$,
   is the subspace of all $\beta$-H\"older continuous
   functions $g$, i.e., $g$ verifies
$$
 \begin{array}{l}
 [g]_{C^{0,\beta}_b} = [ g]_{\beta}:=\sup_{x\neq x'\in\mathbb{R}^{d}}
 {(|g(x)-g(x')|}\, {|x-x'|^{-\beta}})<\infty
\end{array}
$$
(when $\beta=1$, $g$  is   Lipschitz continuous). 
 If $\beta =0$ we set $C_{b}^{0,0}(\mathbb{R}
 ^{d};\mathbb{R}^{k})$  $= C_{b}(\mathbb{R}
 ^{d};\mathbb{R}^{k})$.
If $\beta \in (0,1)$ we also write $C_{b}^{\beta}(\mathbb{R}
 ^{d};\mathbb{R}^{k}) = C_{b}^{0,\beta}(\mathbb{R}
 ^{d};\mathbb{R}^{k})$; note that 
 $C_{b}^{0,\beta}(\mathbb{R}
 ^{d};\mathbb{R}^{k})$ is a Banach space with the norm $\| \cdot \|_{C^{0,\beta}_b}
  = \| \cdot \|_{\beta}$ $ = \| \cdot \|_0 + [\cdot ]_{\beta}
 $, $\beta \in (0,1].$
 If $\R^k=\R$, we set
 $C_{b}^{0,\beta}({\mathbb{R}}^{d};{\mathbb{R}^k})
 = C_{b}^{0,\beta}({\mathbb{R}}^{d})$ (a similar convention is also used for other function spaces).
 A function $g \in  C_{b}^{}({\mathbb{R}^d};{\mathbb{R}^k}) $ belongs to 
 $C_{b}^{1}({\mathbb{R}}^{d};{\mathbb R}^k) $ if it is differentiable on $\R^d$ and its 
 Fr\'echet derivative $ Dg \in C_{b}({\mathbb{R}}^{d}; \mathbb{R}^{dk})$.
If $\beta \in (0,1)$, a function $g \in   
 C_{b}^{1}({\mathbb{R}}^{d};{\mathbb{R}^k})$ belongs to 
$C_{b}^{1 + \beta}({\mathbb{R}}^{d};{\mathbb{R}^k})$ if $Dg \in C_{b}^{\beta}({\mathbb{R}}^{d}; \mathbb{R}^{dk})$. 
The space $C_{b}^{{1+ \beta}}  
 ({\mathbb{R}}^{d}; \mathbb{R}^{k} )$ is a Banach space  endowed with the norm
 $\| g\|_{1 + \beta}$ $= \| g\|_{C^{1+\beta}_b}$
 $= \| g\|_0$ $+ [Dg]_{\beta}$, $g \in 
 C_{b}^{{1+ \beta}}  
 ({\mathbb{R}}^{d}; \mathbb{R}^{k} )$.  $C^{\infty}_b(\R^d; \R^k)$ is the space of all infinitely differentiable functions from $\R^d$ into $\R^k$ with all bounded derivatives. Finally $g \in C^{\infty}_b(\R^d)$ belongs to
 $ C^{\infty}_0(\R^d)  $ if $g$ has compact support.
 Given a  bounded open set $B \subset \R^d$ we can define similar Banach spaces $C^{\beta}(B)$ and $C^{1+ \beta}(B)$ with norms $\| \cdot \|_{C^{\beta}(B)}$ and  $\| \cdot \|_{C^{1+ \beta}(B)}$, $\beta \in (0,1)$.

We usually require that the drift $b$   belongs to $ L^{\infty}(0, T; C_{b}^{0,\beta}(\R^d; \R^d))$, $\beta \in [0,1]$. This means that 
 $b: [0,T] \times {\mathbb R}^d \to {\mathbb R}^d $ is Borel measurable and bounded, $b(t, \cdot ) \in  C_{b}^{0,\beta}(\R^d; \R^d)$, $t \in [0,T]$, and
   $ [b]_{\beta, T} = \sup_{t \in [0,T]} [b(t, \cdot)]_{C^{0, \beta}_b} < \infty.$

   Set $\| b\|_{\beta,T} = [ b]_{\beta, T}$ $+ \| b\|_0$, 
$\|b\|_0 $ $= \sup_{t \in [0,T], x \in \R^d} |b(t,x)|$
 if $\beta \in (0,1]$ and $\| b\|_{0,T} =  \| b\|_0$, $\beta =0$. 
 Note that
 $(L^{\infty}(0, T; C_{b}^{0, \beta}(\R^d; \R^d))), \| \cdot\|_{\beta,T})$ is a  Banach space.  
 We will  also use
\begin{equation} \label{goo}
{G_0} = C([0,T]; {\mathbb R}^d)
\end{equation}
 to denote 
the  separable Banach space  
consisting of all continuous functions $f: [0,T] \to \R^d$, endowed with the usual supremum norm $\| \cdot\|_{G_0}$. 

Let us formulate our assumptions on  \eqref{SDE} when
  $b \in L^{\infty}(0,T ; C^{0, \beta}_b(
\R^d; \R^d))
$, $\beta \in [0,1]$.  Note that, possibly changing $b(t,x) $ with $b(t,x ) + a$,
to study  the SDE \eqref{SDE} we may always assume that 
in the generating triplet $(Q, \nu,a)$ we have
\begin{equation} \label{d67}
a=0.
\end{equation}
In \eqref{SDE} we  deal with  a
 L\'evy process $L$  defined on $(\Omega, {\cal F}, P)$ and $b \in L^{\infty}(0,T ; $ $C^{0, \beta}_b(
\R^d; \R^d))$
 which satisfy
 \begin{hypothesis} \label{primo} {\em 
 (i) For any $s \in [0,T]$  and $x \in {\mathbb R}^d$
 on $(\Omega, {\cal F}, P)$ 
there exists a strong solution $(U_{t}^{s,x})_{t \in [0,T]}$  to \eqref{SDE2}.

\hh  (ii) Let $s \in [0,T]$.
 Given  any two 
strong
solutions $(U_t^{s,x})_{t \in [0,T]}$
 and $(U_t^{s,y})_{t \in [0,T]}$
 defined on  $(\Omega, {\cal F}, P)$ which both solve   \eqref{SDE2} 
with respect to    $L$ and 
 $b$
 (starting from $x$ and $y \in \R^d$, respectively, at time $s$)
we have, 
for any $p \ge  2$, 
 \begin{equation} \label{ciao221}
 \sup_{s \in [0,T]} E \big [ \displaystyle{ \sup_{s \le t \le T}} \, |\, 
U_t^{s,x} \; - \, U_t^{s,y} |^p \big] 
\le C(T) \,
 |x-y|^p,\;\;\; x,\, y \in {\mathbb R}^d,
 \end{equation}
 with $C(T) \!= \!C \big((\nu, Q, 0)$,  $\| b \|_{\beta,T},
 d, \beta, p, T \big) \!>0\!$   independent of $s$, $x$ and $y$. \qed
}
\end{hypothesis}

\noindent The previous hypothesis holds clearly for any L\'evy process $L$ if $\beta =1$ (the Lipschitz case). Next we  consider    the L\'evy  measure $\nu$ associated to the large {jump parts of $L$}. 
\begin{hypothesis} \label{zero} {\em
There exists ${\theta} >0$ such that 
$\int_{ \{ |x| >1 \} } |x|^{{\theta}} \nu (dx) < \infty.$ \qed
}
\end{hypothesis}
\begin{remark} \label{dw} {\em By Theorems 25.3  and 25.18 in \cite{sato} the following three conditions are equivalent:

\hh (a) $\int_{\{ |x| >1 \} } |x|^{{\theta}} \nu (dx)< \infty$ for some ${\theta} >0$;  \hh (b) $E [|L_t|^{{\theta}}] <  \infty$ for some $t >  0$; 
 \hh (c)  $E [ \sup_{s \in [0,t]} |L_s|^{{\theta}}] <  \infty$ for any $t > 0$.

 Note also that 
$\int_{\{ |x| >1 \} } |x|^{{\theta}} \nu (dx)< \infty$ holds for some ${\theta} >0$ then
$\int_{\{ |x| >1 \} } |x|^{{\theta}'} \nu (dx)< \infty$  for any  ${\theta}' \in (0,{\theta}]$. } 
 \end{remark}

\begin{remark} \label{patho} {\em  We present here for the sake of completeness
some general concepts about solutions 
of SDEs  (cf. \cite{Si} for more details). We will not use these notions in the sequel.  Let  the initial time   $s=0$. 
  A weak solution to \eqref{SDE}  with initial condition $x \in \R^d$ is
 a tuple   $(
\Omega, $ $ {\mathcal F},
 ({\cal F}_t)_{t \ge 0}, P, L,$ $ X) $, where $(
\Omega, {\mathcal F},
 ({\cal F}_t)_{t \ge 0}, P )$ is a stochastic basis
  on which it is defined a   L\'evy process $L$  
  and
 a c\`adl\`ag $({\mathcal F}_{t})$-adapted ${\mathbb R}^d$-valued
process $X = (X_t)  $  which solves \eqref{SDE}  $P$-a.s.. 
A weak  solution $X$ which is $( {\cal F}_t^L)$-adapted 
is called strong  solution.
One say that pathwise uniqueness holds for \eqref{SDE} if given two weak solutions $X$ and $Y$ (starting from  $x \in {\mathbb R}^d$) and defined on the same stochastic basis (with respect to the same   $L$) then $P$-a.s. we have $X_t = Y_t$, for any $t \in [0,T]$.
} 
\end{remark}

\section {Preliminary results on  strong solutions}

 Consider \eqref{SDE2} with $b \in L^{\infty}(0,T; C_b^{0, \beta}(\R^d; \R^d))$, $\beta \in [0,1]$, and
suppose that $L$  defined on $(\Omega, {\cal F}, P)$ and $b$
 satisfy
 Hypothesis \ref{primo}.  
 
Let  $s \in [0,T]$, $x\in {\mathbb R}^d$. We start with a 
 strong solution $({\tilde X}_t^{s,x})_{t \in [0,T]}$ to \eqref{SDE2} defined on $(\Omega, {\cal F}, P)$
and  introduce the  $d$-dimensional process 
 $  {\tilde Y}^{s,x} = (  {\tilde Y}^{s,x}_t)_{t \in [0,T]}$,     
\begin{equation}
\label{yy1}
  {\tilde Y}_t^{s,x} = {\tilde X}_t^{s,x} - (L_t - L_s),\;\; t \ge s.
\end{equation} 
Note that on some almost sure event $\Omega_{s,x}$ (independent of $t$)  we have 
\begin{equation}
\label{yry}
  {\tilde Y}_t^{s,x} = x + \int_s^t b(r,   {\tilde Y}_r^{s,x} + (L_r - L_s)) dr, \;\; t \ge s,
\end{equation} 
 and  $  {\tilde Y}_t^{s,x} = x$ on $\Omega $ if $t\le s$.  It follows that  $(  {\tilde Y}_t^{s,x})_{t \in [0,T]}$ have {\it   continuous paths}.  

Let us fix $s \in [0,T]$ and $x \in \R^d$. We modify the process ${\tilde Y}^{s,x}$ only on  $\Omega \setminus \Omega_{s,x}$ by setting    ${\tilde Y}_t^{s,x}(\omega) =x$, for $t \in [0,T]$, if $\omega \not \in \Omega_{s,x}$ (we still denote by ${\tilde Y}^{s,x}$ such new process).

We find that 
 ${\tilde Y}^{s,x}_{\cdot } (\omega)$ $ \in {G_0} = C([0,T]; {\mathbb R}^d)$, for any $\omega \in \Omega$. 
 Moreover (cf. \eqref{goo}) it is easy to check that  
\begin{equation} \label{ancora}
{\tilde Y}^{s,x} = {\tilde Y}^{s,x}_{\cdot} \;\; \text{ is a random variable with values in 
${G_0} $}.  
\end{equation}
Now, for each fixed $s \in [0,T]$, we will construct a suitable modification of the random field $({\tilde Y^{s,x}})_{x \in {\mathbb R}^d}$  with values in $G_0$.  We need the following  special case of 
Theorem 1.1 of \cite{IS}. It 
 is  
 a generalized 
 Garsia-Rodemich-Rumsey type lemma. 
 \begin{theorem}
 \label{grr} (\cite{IS}) Let $(M,\rho)$ be a separable metric space
 and $(\Omega, {\cal F}, P)$ be a probability space. Let
 $\psi : \Omega \times {\mathbb R}^d \to M$ be a ${\cal F} \times {\cal B}(\R^d)$-measurable map such that $\psi (\omega, \cdot )$ is continuous on $\R^d$, for each $\omega \in \Omega$, and there exists $c>0$ and $p > 2d $ for which
$
E[(\rho(\psi (\cdot, x) ,\psi (\cdot, y))^p]$  $\le c |x-y|^p,$ $  x,y \in {\mathbb R}^d. 
$
 Then, for any $\omega \in \Omega$,  $x,y \in {\mathbb R}^d$,
\begin{equation}
\label{yy}
\rho(\psi (\omega, x) ,\psi (\omega, y)) \le Y(\omega) |x-y|^{1 - \frac{2d}{p}} \, [(|x| \vee |y|)^{\frac{2d + 1}{p}} \vee 1],
\end{equation}  
where  $Y:  \Omega \to [0, \infty]$ is the following $p$-integrable  random variable:
$$
 Y(\omega) =   \,\Big(\int_{{\mathbb R}^d} \int_{{\mathbb R}^d}  \frac{ (\rho(\psi (\omega, x) ,\psi (\omega, y))^p}  {|x-y|^p}
  \;    f(x)  f(y) dx dy\Big)^{1/p},\;\;\; \omega \in \Omega,
$$
with $f(x)  = c(d,p) \big( [|x|^d \, [(\log(|x|) \vee 0)^2] \, \vee 1 \big)^{-1}$, $x \not = 0$, for some constant $c(d,p)>0$.  
\end{theorem}
In Theorem 1.1 of \cite{IS}   $f(x)$ is just defined as $ \big( [|x|^d \, [(\log(|x|) \vee 0)^2] \, \vee 1 \big)^{-1} $. Moreover 
$Y(\omega) = c_3 (\int_{{\mathbb R}^d} \int_{{\mathbb R}^d}  \frac{ (\rho(\psi (\omega, x) ,\psi (\omega, y))^p}  {|x-y|^p}$ $
   f(x)  f(y) dx dy)^{1/p}$.

\begin{lemma} \label{fi}  Consider \eqref{SDE2} with $b \in L^{\infty}(0,T; C_b^{0, \beta}(\R^d; \R^d))$, $\beta \in [0,1]$, and
suppose that $L$  defined on $(\Omega, {\cal F}, P)$ and $b$
 satisfy
 Hypothesis \ref{primo}.  
 Let us fix $s \in [0,T]$  and consider the random field $ {\tilde   
 Y}^{s} = ({\tilde Y}^{s,x})_{x \in {\mathbb R}^d}$ with values in   
 ${G_0}$ (see \eqref{ancora}$)$. We have:

\hh (i) There exists a continuous version $Y^s =({ Y}^{s,x})_{x \in {\mathbb R}^d}$ with values in ${G_0}$ (i.e., for any $x \in {\mathbb R}^d,$ $ Y^{s,x}=   \tilde Y^{s,x}$ in $G_0$ on some almost sure event).

\hh (ii)  For any  $p >2d$ there exists a  random variable $U_{s,p}$ with values in $[0, \infty]$ such that, for any $\omega \in \Omega$, $x, y \in {\mathbb R}^d$,
\begin{equation}
\label{est23}
\|   Y^{s,x}(\omega)   -   Y^{s,y} (\omega)  \|_{G_0} \le U_{s,p}(\omega) \, [(|x| \vee |y|)^{\frac{2d + 1}{p}} \vee 1] \,    |x-y|^{1- 2d/p}.
\end{equation} 
Moreover, with  the same constant $C(T)$ appearing in \eqref{ciao221},
\begin{equation} \label{ri4}
\begin{array}{l}
\sup_{s \in [0,T]} \, E [U_{s,p}^p] \le C(d) \, C(T) < \infty
\end{array}
\end{equation} 
 where $C(d) = \big (\int_{{\mathbb R}^d}
    f(x) dx \big)^2$
(hence $U_{s,p}$  is finite on some almost sure event possibly depending on $s$ and $p$).

\hh (iii) On some almost sure event $\Omega_s'$ (independent of $t$ and $x$) we have
\begin{equation}
\label{yrye}
  {Y}_t^{s,x} = x + \int_s^t b(r,   {Y}_r^{s,x} + (L_r - L_s)) dr, \;\; t \ge s,\; x\in {\mathbb R}^d
\end{equation}
 (where $ {Y}_t^{s,x}(\omega) =  ({Y}_{\cdot}^{s,x}(\omega))(t)$, $t \in [0,T])$. 
\end{lemma}
\begin{proof} {\bf (i)} Using \eqref{ciao221}
 we can apply the   Kolmogorov-Chentsov  continuity test
 as in \cite{Ka}, page 57, and  obtain a continuous version $  Y^s$ of $\tilde Y^s$. The classical proof given in \cite{Ka}  uses the Borel-Cantelli lemma; by such proof it is easy to  show that
  an analogous of \eqref{ciao221} holds for $  Y^s$, i.e., for $p \ge 2,$  $x,y \in {\mathbb R}^d,$ 
\begin{equation}
\label{kol23}
\begin{array}{l}
\sup_{s \in [0,T]} E [ \|  Y^{s,x} -   Y^{s,y}\|_{{G_0}}^p ]
= \sup_{s \in [0,T]} E [ \|  \tilde Y^{s,x} -   \tilde Y^{s,y}\|_{{G_0}}^p ]
\le C(T) |x-y|^p.  
\end{array}
\end{equation} 
{\bf (ii)}  As in Theorem \ref{grr} we consider 
the random variables  
\begin{equation*}
U_{s,p} (\omega) =  \Big(\int_{{\mathbb R}^d} \int_{{\mathbb R}^d} \Big( \frac{\|   Y^{s,x}(\omega)   -   Y^{s,y} (\omega)  \|_{G_0}}{|x-y|}\Big)^p \;    f(x)f(y) dx dy\Big)^{1/p},
\end{equation*}
$\omega \in \Omega,$ $p>2d$ and $s \in [0,T]$. By \eqref{kol23} and
 Theorem \ref{grr}  we  obtain \eqref{est23} and \eqref{ri4}.

\hh {\bf (iii)} We start from equation \eqref{yry} involving the process $(\tilde Y^{s,x})$. Since for some almost sure event $\Omega_{s,x}' \subset \Omega_{s,x}$, we have $ Y_t^{s,x} (\omega)= $ $  \tilde Y^{s,x}_t(\omega)$, $\omega \in \Omega_{s,x}'$, $t \in [0,T]$, we obtain from \eqref{yry} 
$$
 {Y}_t^{s,x} (\omega) = x + \int_s^t b(r,   {Y}_r^{s,x} (\omega) + (L_r (\omega) - L_s (\omega))) dr,
$$
for any  $s \in [t,T]$, $x \in \Q^d$,  $\omega \in 
 \Omega_s' = \bigcap_{x \in  \Q^d } \, \Omega_{s,x}'$. Note also that by (i) the function:  $x \mapsto Y^{s,x}(\omega)$ is continuous for all $\omega \in \Omega$. Take now $x \in \R^d$ and let $(x_n) \subset \Q^d$ be a sequence converging to $x$. It follows from the continuity of $b(r, \cdot)$ and the dominated convergence theorem that, for any $t \ge s$, on $ \Omega_s'$ we have:
\begin{gather*}
{Y}_t^{s,x} = \lim_{n \to \infty} {Y}_t^{s,x_n} =
\lim_{n \to \infty}  {x_n}  + \lim_{n \to \infty}  \int_s^t b(r,   {Y}_r^{s,x_n} + (L_r - L_s)) dr
\\
= x + \int_s^t b(r,   {Y}_r^{s,x} + (L_r - L_s)) dr
\end{gather*}
and this shows the assertion.
\end{proof}
Let $s \in [0,T]$. According to the previous result starting from $Y^s = (Y^{s,x})_{x \in \R^d}$ we  can define   random variables $X_t^{s,x} : \Omega \to {\mathbb R}^d$ as follows: $X_t^{s,x} = x$   if $t \le s$ and
\begin{equation}
\label{yy12}
X_t^{s,x} = Y_t^{s,x} + (L_t - L_s),\;\;\; s,t \in [0,T],\; x \in {\mathbb R}^d, \; s \le t.
\end{equation} 
By the properties of $Y^{s,x}$ 
we get   $P ({\tilde X}^{s,x}_t = {X}^{s,x}_t ,\; t \in [0,T]) =1$, for any $x \in {\mathbb R}^d$ (cf. \eqref{yy1}).
 Moreover, using also  \eqref{yrye}, we find that   for some almost sure event $\Omega_{s}'$ (independent of $x$ and $t$) the map:
$t \mapsto {X}_t^{s,x}(\omega)$ is c\`adl\`ag  on $[0,T]$, for any $\omega \in \Omega_s'$, $x \in \R^d$, and on
$\Omega_s'$ we have
 \begin{equation} \label{ssd}
{X}^{s,x}_t =  x + \int_s^t b(r, {X}^{s,x}_r) dr + L_{t} - L_s,
\; s \le t \le T,\; x \in \R^d.
\end{equation}
 Thus $({X}^{s,x}_t)_{t \in [0,T]}$ is a  particular {\it strong solution} to \eqref{SDE2}. 
By Lemma \ref{fi} we also have, for any $s \in [0,T]$, $x \in {\mathbb R}^d$,  
on $\Omega$  
\begin{equation} \label{doro}
\lim_{y \to x} \sup_{t \in [0,T]}|  
X_t^{s,x} - X_t^{s,y} | =0.
\end{equation}
We can prove the following  flow property.

\begin{lemma} \label{fll}  Under the same assumptions of Lemma \ref{fi}  consider the strong solution   $(X^{s,x}_t)_{t \in [0,T]}$ defined in \eqref{yy12}.  
Let $0 \le s <u \le  T$. There exists an almost sure event $\Omega_{s,u}$ (independent of $t \in [u,T]$ and $x \in \R^d$)  such that 
for $\omega \in \Omega_{s,u}$, $x \in \R^d$, we have 
\begin {equation} \label{co4}
X_t^{s,x } (\omega) =  
 X_t^{u,  \, X_{u}^{s,x}(\omega)} \,(\omega) ,\;\;\; t \in [u,T], \;\; x \in {\mathbb R}^d.
\end{equation}
\end{lemma}
\begin{proof}  Let us fix $s, u \in [0,T]$, $s < u$, and $x \in \R^d$.  We introduce the process $(V_t^x)_{0 \le t \le T}$ on $(\Omega, {\cal F}, P)$ with values in ${\mathbb R}^d$: 
$$
\displaystyle{ V_t^x(\omega)= \begin{cases} 
 X_t^{s,x}(\omega) \;\;\; \;\;\; \text{for} \;\; 0\le t \le u,
\\ 
 X_t^{u,  \, X_{u}^{s,x}(\omega)} \,(\omega)
 \;\;\; \text{for} \;\;  u < t \le T
\end{cases}, \;\;\; \omega \in \Omega.} 
$$
In order to prove \eqref{co4} we will show that 
 $(V_t^x)$ is strong solution to \eqref{SDE2} 
 for $ t\ge s$.  
Then by uniqueness we will get the assertion.
 
It is easy to prove that $(V_t^x)$ has c\`adl\`ag paths.
More precisely, by \eqref{yrye} on some almost sure event $\Omega_{s}' \cap \Omega'_u$ (independent of $x$) we have that 
 $t \mapsto V_t^x(\omega)$ is c\`adl\`ag on $[0,T]$
(note also that, for any $\omega \in \Omega_{s}' \cap \Omega'_u$, $z \in {\mathbb R}^d,$ $\lim_{t \to u^+} X_{t}^{u,z}(\omega) =$ $z$).

Moreover, for any $x \in {\mathbb R}^d$ and $t \ge s$, the random variable $V_t^x$ is ${\cal F}_{s,t}^L$-measurable. The assertion is clear if $t \le u$. Let us consider the case  when $t>u$. 
First $X_{u}^{s,x}$ is  ${\cal F}_{s,t}^L$-measurable.
Define $F_{t,u} (z,\omega) = X_t^{u,  \, z}(\omega)$, $z \in {\mathbb R}^d$, $\omega \in \Omega$. The mapping $F_{t,u}$ is clearly ${\cal B}({\mathbb R}^d) \times {\cal F}_{s,t}^L$-measurable on ${\mathbb R}^d \times \Omega$
  and 
 $F_{t,u}(\cdot , \omega)$ is continuous on ${\mathbb R}^d$, for any $\omega \in \Omega$, by \eqref{doro}. It follows  that also the map: $\omega \mapsto  F_{t,u} (X_{u}^{s,x}(\omega), \omega )$ is ${\cal F}_{s,t}^L$-measurable.

It is clear that $(V_t^x)$ solves \eqref{ssd} on  $\Omega_{s}'$ when $s \le t \le u$ (recall    
 \eqref{yrye}). Let us consider the case when $t \ge u$. 
According to \eqref{ssd} we know  that on 
 $\Omega_{u}'$ we have 
\begin{equation} \label{ssd2}
{X}^{u, {X}^{s,x}_u}_t =  {X}^{s,x}_u  + \int_u^t b(r, 
{X}^{u, {X}^{s,x}_u}_r ) dr + L_{t} - L_u, \;\; t \ge u.
\end{equation} 
Hence  on  $\Omega_{u}' \cap \Omega_s'$ we have for $t \ge u$
\begin{gather*}
V_t^x = {X}^{u, {X}^{s,x}_u}_t =  x + \int_s^u b(r, {X}^{s,x}_r) dr + L_{u} - L_s
\\
+ \int_u^t b(r, 
{X}^{u, {X}^{s,x}_u}_r ) dr + L_{t} - L_u
 = x  + \int_s^t b(r, 
{V}^x_r ) dr + L_{t} - L_s.
\end{gather*}
It follows that  $(V_t^x)$ solves \eqref{ssd} on  $\Omega_{s}' \cap \Omega'_u$ when $s \le t \le T$. By Hypothesis \ref{primo} we infer that, for any $x \in {\mathbb R}^d$, on some almost sure event $\Omega_{s,u,x}$ we have  that  $V^{x}_t = {X}^{s,x}_t$, $t \in [s,T]$. 
In  particular we get 
$V^{x}_t = {X}^{s,x}_t$, $t \in [u,T]$ and this proves \eqref{co4} at least on an almost sure event $\Omega_{s,u,x}$.

To remove the dependence on $x$ in the almost sure event, we note that the mapping:  
 $ x  \mapsto V^{x}_{t}(\omega)  $
is   continuous from  ${\mathbb R}^d$ into ${\mathbb R}^d$,
for any $\omega \in \Omega$, $t \in [0,T]$ (see \eqref{doro}).  Arguing as in the final part of the proof of Lemma \ref{fi} we obtain that 
 $X^{s,x}_t (\omega) = V^{x}_t(\omega),$ for $ t \in [u,T],$   $x\in {\mathbb R}^d$ and $ \omega \in 
 \Omega_{s,u} = \bigcap_{x \in \Q^d } \Omega_{s,u,x}$. This proves \eqref{co4}. 
 \end{proof}
Following \cite{royden} page 169 (see also Problem 48 in \cite{royden}) we introduce  the space
  $ C({\mathbb R}^d; {G_0})$
 consisting of all continuous functions from ${\mathbb R}^d$ into ${G_0}= C([0,T]; {\mathbb R}^d)$ endowed with the compact-open topology (or the topology of the uniform convergence on compact sets).  
This is a complete metric space  endowed with the following metric: 
 \begin{equation} \label{metr}
d_0(f,g) = \sum_{N \ge 1} \frac{1}{2^N} \frac{\sup_{|x|\le N} \|f(x) - g(x) \|_{G_0} }{1+ \sup_{|x|\le N} \|f(x) - g(x) \|_{G_0} },\;\;\; f,g \in C({\mathbb R}^d; {G_0}).
\end{equation}
 It is well-know that $ C({\mathbb R}^d; {G_0})$ is also  \textit{separable} (see, for instance, \cite{khan};  on the other hand $C_b({\mathbb R}^d; {G_0})$ is not separable). We will also consider  the following  projections  
\begin{equation} \label{pro2} 
 \pi_x : C({\mathbb R}^d; {G_0}) \to {G_0}, \;\;\ \pi_x (f) = f(x) \in G_0, \;\; x \in {\mathbb R}^d, \;\; 
 f \in C({\mathbb R}^d; {G_0})
\end{equation}
 (each $\pi_x$ is a continuous map).
According to Lemma \ref{fi} for any $s \in [0,T]$  the random field $(Y^{s,x})_{x \in {\mathbb R}^d}$ has continuous paths. 
It is not difficult to prove that, 
 for any $s \in [0,T]$, the mapping:
\begin{equation}
\label{cte}
 \omega  \mapsto  Y^s (\omega) = Y^{s,  \, \cdot \,} (\omega)
\end{equation} 
is measurable from $(\Omega, {\cal F}, \P) $ with values in $C({\mathbb R}^d; {G_0})$. Indeed thanks to the  separability of $C({\mathbb R}^d; {G_0})$ to check the  measurability  it is enough to prove that counter-images of balls 
 $B_r(f_0) = \{ f \in C({\mathbb R}^d; {G_0})  : \, 
  \sum_{N \ge 1} \frac{1}{2^N} \frac{\sup_{\{ |x|\le N , x \in \Q^d\} } \|f(x) - f_0(x) \|_{G_0} }{1+ \sup_{\{ |x|\le N , x \in \Q^d\}} \|f(x) - f_0(x) \|_{G_0} } <r \}$, $r>0, $ $f_0 \in C({\mathbb R}^d; {G_0})$, are events in $\Omega$.  

\smallskip
In the sequel we will set  $Y = (Y^s)_{s \in [0,T]}$ to denote the previous stochastic
process with values in $C({\mathbb R}^d; {G_0})$ and defined on $(\Omega, {\cal F}, P)$.

\section{
A version  of the  solution  which    is c\`adl\`ag with respect to the initial time $s$ }

 In Theorem \ref{modi12} we  will prove the existence of a c\`adl\`ag modification $Z$ of the process $Y= (Y^s)_{s \in [0,T]}$ with values in $C({\mathbb R}^d; {G_0})$ (cf. \eqref{cte}).  
 In particular $Z$
 is
   a  modification of $Y$ which is c\`adl\`ag in $s$ uniformly in $x$, when $x$ varies on compact sets of ${\mathbb R}^d$. In Lemma \ref{le1} we will study important properties of $Z.$
Before discussing on c\`adl\`ag modifications
we  recall a standard  definition. 

A process $X = (X_t)_{t \in [0,T]}$ defined on $(\Omega, {\cal F}, P)$ with values in a  metric  space $(S,d)$ is
 {\it stochastically continuous  (or continuous in probability)}   if for any $t_0 \in [0,T]$, $X_t$ converges to $X_{t_0}$
in probability (see \cite{GS} for more details).
  
Important results on c\`adl\`ag modifications for stochastic processes were  given by  Gikhman and Skorokhod (see Section III.4 in \cite{GS}). We will use a recent  result given in Theorem 4.2 of \cite{BF}. In contrast with \cite{GS} the proof of this theorem does not require  the separability of the stochastic process.  It is stated in \cite{BF} for stochastic processes $(X_t)$ when $t \in [0,1]$. However a simple rescaling argument shows that it holds when  $t \in [0,T]$, for any $T>0$.

\begin{theorem} \label{be} (\cite{BF}) Let $X = (X_t)_{t \in [0,T]}$
be a stochastically continuous  process defined on a complete probability space and with values in a complete metric space $(S,d)$.
Let $0 \le s < t < u \le T$ and define 
$\triangle (s,t,u) =  d(X_s, X_t) \wedge d(X_t, X_u) 
$. 
A sufficient assumption in order  that   $X$ has a modification with c\`adl\`ag paths is the following one:
there exist non-negative real functions  $\delta $ and $x_0$ ($\delta $ is   non-decreasing and continuous on $[0,T]$, $\delta(0)=0$, and $x_0$ is decreasing and integrable on $(0,T]$) such that the following conditions hold, for any $
0 \le s < t < u \le T$, $M>0$, 
\begin{gather} \label{bez1}
E [\triangle (s,t,u)  1_{\triangle (s,t,u)  \ge M}]
 \le \delta (u-s) \, \int_0^{P (\triangle (s,t,u) \ge M)}
 x_0 (r)dr, 
\\ 
\label{bez2}
 \int_0^1 \Big (  u^{-1} \int_0^u x_0 (r)dr \Big) \, \frac{\delta (u)}{u} du < \infty.
\end{gather} 
\end{theorem}
The next result follows easily (cf. Section III.4 in \cite{GS}).
\begin{corollary} \label{be1}  Let $X = (X_t)_{t \in [0,T]}$
be a stochastically continuous  process with values in a complete metric space $(S_,d)$. A sufficient condition in order that  $X$ has a  c\`adl\`ag modification is the following one:  there exists $q>1/2$ and  $r>0$
such that, for any $
0 \le s < t < u \le T$, 
  we have
\begin{gather} \label{bez11}
E \big [d(X_s, X_t)^q \, \cdot  \, d(X_t, X_u)^q  ]
 \le 
 C |u-s|^{1+r}. 
 \end{gather} 
\end{corollary}
\begin{proof} 
In order to apply Theorem \ref{be} we introduce $x_0(h) = \frac{2q -1}{2q}\, h^{-1/2q} $, $h \in (0,T]$.
 Let us fix $0 \le s < t < u$ and $M>0$.
Noting that for $a,b \ge 0$ we have $a \wedge b \le \sqrt{a} \, \sqrt{b}$. We find by the H\"older inequality 
\begin{gather*}
 E [\triangle (s,t,u)  1_{\triangle (s,t,u)  \ge M}]
\le 
( E [\triangle (s,t,u)^{2q}] )^{1/2q}  \,  (P (\triangle (s,t,u)  \ge M ))^{\frac{2q-1}{2q}}
\\
\le E \big [d(X_s, X_t)^q \, \cdot  \, d(X_t, X_u)^q \big] 
\big)^{1/2q} \, \int_0^{P (\triangle (s,t,u) \ge M)}
 x_0 (r)dr \\ \le C^{1/2q} \, |u-s|^{(1+ r)/2q} \, \int_0^{P (\triangle (s,t,u) \ge M)}
 x_0 (r)dr.
\end{gather*}
Setting $\delta (h) = h^{(1+ r)/2q}$, $h \in [0,T]$, we see that $\int_0^T   {\delta (u)}\, {u^{-1 - \frac{1}{2q}}} du 
< \infty$ 
 is equivalent to \eqref{bez2};  we get the assertion.
\end{proof}
We now prove the stochastic continuity of $Y$.
\begin{lemma}  \label{modi1a} 
Consider \eqref{SDE2} with $b \in L^{\infty}(0,T; C_b^{0, \beta}(\R^d; \R^d))$, $\beta \in (0,1]$, and
suppose that $L$  and $b$
 satisfy
 Hypotheses \ref{primo} and \ref{zero}. 
 Then  
 the  process $Y= (Y^s)$ with values in 
$C({\mathbb R}^d; {G_0})$ (see (\ref{cte})) is continuous in probability. 
 \end{lemma}
\begin{proof} 
 Let us fix $s \in [0,T]$. We have to prove that
\begin {equation} \label{te25}
\lim_{s' \to s} P \Big( \sup_{|x|\le N}\sup_{t \in [0,T]}| Y_{t}^{s,x} -  Y_{t}^{s',x}| > r \Big) = 0, \;\; \text{for any $r>0$, $N \ge 1$.}
\end{equation}
Indeed this is equivalent to $\lim_{s' \to s} P \big( 
d_0 (Y^{s} ,  Y^{s'}) > r \big) = 0,$ $r>0$. 
 To this purpose it is enough to check both the left and the right continuity in \eqref{te25}. Let us check the right continuity in $s$ (assuming $s \in [0,T)$). The proof of the left-continuity in $s$ can be done in a  similar way.
Since $C^{0, \beta}_b(\R^d; \R^d) \subset 
 C^{0,\beta'}_b(\R^d; \R^d)$ for $0 < \beta' \le \beta \le 1$ we may suppose that $\beta $ is sufficiently small; we will assume (cf. Hypothesis \ref{zero})
\begin{equation} \label{bet}
 \beta (2d + 1)\,  < 2d {\theta}.
\end{equation}  
 Let $(s_n) \subset ]s,T]$ with  $s_n \to s$. We have to prove  
 that for  fixed $N \ge 1$, $\delta >0$,
 \begin {equation} \label{te2}
\lim_{n \to \infty} P \Big(\sup_{|x|\le N} \sup_{t \in [0,T]}| Y_{t}^{s,x} -  Y_{t}^{s_n,x}| > \delta \Big) = 0.
\end{equation}
If we show that 
\begin{equation} \label{de33}
E \Big [\sup_{0 \le t \le T} \sup_{|x|\le N}| Y_{t}^{s,x} -  Y_{t}^{{s_n},x}| \Big]   \to 0
 \;\; \text{ as $n \to {\infty}$.}
\end{equation} 
then \eqref{te2} follows. Let us fix $n \ge 1$ and consider the random variable $J_{t,x,n, s} =| Y_{t}^{s,x} -  Y_{t}^{{s_n},x}|$.
If $t \le s$
 we find $J_{t,x,n, s} =0$. If ${s} \le t \le  s_n $  then, for any $x \in {\mathbb R}^d$,  on some almost sure event $\Omega_{s,s_n}$
 (independent of $x$ and $t$; see  \eqref{yrye})
$$
 J_{t,x,n, s} =  \Big|\int_{{s}}^{t} b(r,Y_r^{{s_n} , x} + (L_r - L_{{s}}))dr  \Big| \le \| b\|_0  |t-{s}| \le \| b\|_0  |s -{s_n}|.
$$
Hence in order to get \eqref{de33}  we  need to prove that  
\begin{equation} \label{de3}
E \Big [\sup_{s_n \le t \le T} \sup_{|x|\le N}| Y_{t}^{s,x} -  Y_{t}^{{s_n},x}| \Big]   \to 0
 \;\; \text{ as $n \to {\infty}$.}
\end{equation} 
Let $t \ge {s_n}$. We have on   $\Omega_{s,s_n}$ 
\begin{gather} \label{dir2}
\sup_{|x| \le N} | Y_{t}^{s,x} -  Y_{t}^{{s_n},x}|  \le \sup_{|x| \le N} \Big |\int_{s}^{t} b(r,X_r^{s , x})dr - \int_{{s_n}}^{t} b(r,X_r^{{s_n} , x})dr \Big|
\\ 
\nonumber \le 2 |s-{s_n}| \, \| b\|_{0} +    
  \sup_{|x| \le N}  \int_{{s_n}}^{t} |b(r,X_r^{s,x} )  -  b(r,X_r^{{s_n},x} )  |
  dr.  
\end{gather}
By Lemma \ref{fll} on some almost sure event $\Omega'_{s,s_n} \subset \Omega_{s, s_n}$ (independent of $x$ and  $r$) we have for $r \in [s_n, T]$
\begin{gather*}
\sup_{|x| \le N} \,  | b(r,X_r^{s,x} )  -  b(r,X_r^{{s_n},x} )| =  
 \sup_{|x| \le N} \, | b(r, X_r^{{s_n}, X_{{s_n}}^{s,x}})    - b(r, X_r^{{s_n},x})  |  
\\
 \le [b]_{\beta,T} \sup_{|x| \le N} \, \sup_{r \in [0,T]}| X_r^{{s_n}, X_{{s_n}}^{s,x}}    -  X_r^{{s_n},x}  |^{\beta}
 =  [b]_{\beta,T} \sup_{|x| \le N}\|  Y^{s_n, X_{{s_n}}^{s,x}}   -  Y^{s_n,x}   \|_{G_0}^{\beta}.
\end{gather*}
By Lemma \ref{fi} with $p= 4d$, setting  $U_{s'}
 = U_{s',p}$, $s' \in [0,T]$, we get 
 \begin{gather} \label{deb2b}
\sup_{|x| \le N} \,  | b(r,X_r^{s,x} )  -  b(r,X_r^{{s_n},x} )|
\\ \nonumber 
\le [b]_{\beta, T} \,  [(|x| \vee |X_{{s_n}}^{s,x}|)^{\frac{2d + 1}{4d}} \vee 1]^{\beta}\,  U_{s_n}^{\beta} \, \sup_{|x| \le N} \, |x-X_{{s_n}}^{s,x}|^{{\beta/2}}.
\end{gather}
 Noting that, for $|x| \le N$, $n \ge 1$, 
 $|X_{{s_n}}^{s,x}| $ $ \le  N + 2T \| b\|_0 + |L_{s_n} - L_s|$
 we obtain on $ \Omega'_{s,s_n}$
\begin{gather} \label{deb2}
\sup_{|x| \le N} \,  | b(r,X_r^{s,x} )  -  b(r,X_r^{{s_n},x} )|
 \le [b]_{\beta, T} \,   V_{s, s_n, N}^{\beta} \, \sup_{|x| \le N} \, |x-X_{{s_n}}^{s,x}|^{{\beta/2}},
\end{gather}
 $r \in [s_n,T]$,
where we have introduced the random variables 
\begin{equation} \label{vvv}
V_{s, s', N} = \Big[ N^{ \frac{ 2d + 1 }{4d} } + (2T \| b\|_0)^{\frac{2d + 1 }{4d}} + |L_{s'} - L_s|^{\frac{2d + 1 }{4d}} \Big]\,  U_{s' },
\end{equation}
 $0 \le s< s' \le T$.
By Remark \ref{dw} and \eqref{bet} we know that, for any $n \ge 1$,
$$
E [|L_{s_n} - L_s|^{\frac{\beta (2d + 1) }{2d}}] 
= E [ |L_{{s_n} - s} |^{\frac{\beta (2d + 1) }{2d}} ]
\le E [ \sup_{s \in [0,T]} |L_s|^{\frac{\beta (2d + 1) }{2d} }]
< \infty, 
$$
since $E [ \sup_{r \in [0,T]} |L_r|^{{\theta}}] < \infty.$
Using also that 
 $\sup_{r \in [0,T]} E [U_{r,p}^{2\beta}] = k'  < \infty $ (see \eqref{ri4})  we  obtain by the Cauchy-Schwarz inequality  
\begin{equation}
\label{sup2}
\sup_{0 \le s < s' \le T} E [V_{s,s', N}^{\beta}] = k_0  < \infty
\end{equation}  
($k_0$ also depends on $N$).  Let us  revert  to
 \eqref{deb2}. Since
\begin{equation} \label{do2}
|X_{{s_n}}^{s,x} -x| \le \int_{s}^{{s_n}} | b(r,X_r^{s,x} )| dr + 
 |L_{{s_n}} - L_s| \le \| b \|_0 |s-s_n| +  |L_{{s_n}} - L_s|, 
\end{equation}
 for any $x \in {\mathbb R}^d$, $n \ge 1,$  we obtain for $r \in [s_n, T]$
\begin{equation} \label{deb24}
\sup_{|x| \le N} \,  | b(r,X_r^{s,x} )  -  b(r,X_r^{{s_n},x} )|
\le [b]_{\beta, T}  \,  V_{s,s_n, N}^{\beta} \, 
  ( \| b\|_0 \, |s- s_n |  + |L_{{s_n}} - L_s|)^{\beta/2}
\end{equation}
and so (cf. \eqref{dir2})
$$
  \sup_{|x| \le N}  \int_{{s_n}}^{t} |b(r,X_r^{s,x} )  -  b(r,X_r^{{s_n},x} )  | dr 
\le T  [b]_{\beta, T}  \,  V_{s,s_n, N}^{\beta}  
  ( \| b\|_0  |s- s_n |  + |L_{{s_n}} - L_s|)^{\beta/2}.
$$
Let us define the random variables  $Z_n = \| b\|_0 \, |s- s_n | + |L_{{s_n}} - L_s|
$. By the stochastic continuity of $L$ we know that
\begin{equation}
 \label{lle}
\lim_{n \to \infty}  P \big( Z_n   > \delta \big) =0, \;\;\; \delta>0.
\end{equation} 
 Using  \eqref{dir2} 
on an almost sure event $\Omega_{s,s_n}'$, for any $\delta >0$, we have
\begin{gather*}
\sup_{s_n \le t \le T } \sup_{|x| \le N} | Y_{t}^{s,x} -  Y_{t}^{{s_n},x}|  
\\ \le 2 |s-{s_n}| \, \| b\|_{0}   
 +  \, \big(1_{ \{  Z_n  \le \delta \}}   +  1_{ \{  Z_n  > \delta \}} \big)  \cdot  \sup_{|x| \le N}  \int_{{s_n}}^{t} |b(r,X_r^{s,x} )  -  b(r,X_r^{{s_n},x} )  | dr 
 \\
\le   T \, 1_{ \{  Z_n  \le  \delta   \}} 
 [b]_{\beta, T}  \,  V_{s,s_n, N}^{\beta} \, \, \delta^{ \beta/2 } 
\, +  
 \, 2T \,  \| b \|_0   1_{ \{  Z_n  >  \delta   \}} +
    2 |s-{s_n}| \, \| b\|_{0}. 
\end{gather*}
Applying the expectation and using \eqref{sup2}
  we arrive at 
\begin{gather*}
 E [\sup_{s_n \le t \le T } \sup_{|x| \le N} | Y_{t}^{s,x} -  Y_{t}^{{s_n},x}| ] 
\\ \le 2 |s-{s_n}| \, \| b\|_{0}  + 
 k_0  T \, 
 [b]_{\beta, T}   \, \delta^{ \beta/2 } 
\, +  
 \, 2T \,  \| b \|_0   P  (  Z_n  >  \delta   ). 
\end{gather*} 
Now, using  \eqref{lle}, we obtain easily \eqref{de3} and this completes the proof. 
\end{proof}

In the next result we need the 
  L\'evy-It\^o formula.
 To this purpose we  recall the definition of Poisson random measure $N$: 
$
N((0,t] \times H) $ $ = \sum_{0 < s \le t} 1_{H} (\triangle L_s) 
$
 for any Borel set $H$ in ${\mathbb R}^d \setminus \{ 0 \}$; 
 $\triangle L _s$ $ = L_s - L_{s-}$ denotes the jump size of $L$
 at time $s > 0.$
 The  L\'evy-It\^o decomposition of the  given  L\'evy process $L$ on $(\Omega, {\cal F}, P)$ with  generating triplet $(\nu, Q, 0)$ (see  Section 19 in \cite{sato}
or Theorem 2.4.16 in 
\cite{A}) 
asserts that
there exists a $Q$-Wiener 
 process $B = (B_t)$ on $(\Omega, {\cal F}, P)$ independent of $N$
with covariance
matrix  $Q$ (cf. \eqref{levii})   
  such that on some almost sure event $\Omega'$ we have
\begin{equation} \label{itl}
 L_t =  A_t + B_t + C_t,
  \;\;\; t \ge 0, \;\;\; \text{where}
\end{equation}  
 \begin{equation} \label{ito1}
 A_t =
 \int_0^t \int_{\{ |x| \le 1\} } x \tilde N(ds, dx), 
 \;\;\; 
 C_t = \int_0^t \int_{\{ |x| > 1 \} } x  N(ds, dx);
 \end{equation} 
 $\tilde N$ is the compensated Poisson measure (i.e., $\tilde N (dt,  dx)$ $ = N(dt, dx)-  dt \nu(dx)$).

\begin{theorem}  \label{modi12}
Under the same assumptions of Lemma \ref{modi1a}
 consider the  process $Y = (Y^s)$ with values in 
$C({\mathbb R}^d; {G_0})$ (see (\ref{cte})). 
There exists a modification $Z = (Z^s)$ of $Y$  with c\`adl\`ag paths. 
 \end{theorem}
\begin{proof}  To prove the assertion we will  apply Corollary \ref{be1}. We already know by Lemma \ref{modi1a} that $Y$ is continuous in probability. 

In the proof we will use the fact  that 
 $\int_{\{ |x| >1 \}} |x|^{{\theta}} \nu (dx) < \infty$ for some ${\theta}  \in (0, 1) $.  This is not a restrictive according to Remark \ref{dw}. 
 We proceed in  some steps.

\hh \textit{Step I.} We establish simple moment estimates for the L\'evy process $L$, using the Ito-L\'evy decomposition \eqref{ito1}.

Using basic properties of the martingales $(A_t)$ and $(B_t)$ we obtain 
\begin{equation} \label{unoa}
 E |B_t|^2 = C_Q t, \;\;\; E |A_t|^2 =  t \int_{\{ |x| \le 1 \}} |x|^2  \nu (dx), \;\;\; t \ge 0
\end{equation} 
Now we concentrate   on the compound Poisson process $C= (C_t)$; on $\Omega$ we have
 \begin{equation*}
  |C_t|^{{\theta}} =  \Big |   \sum_{0 < s \le t }  \triangle L_s    \, 1_{ \{ |\triangle L_s| >1 
 \}} \Big|^{{\theta}}  
 \le     \sum_{0 < s \le t }  |\triangle L_s|^{{\theta}} \,   1_{ \{ |\triangle L_s| >1 
 \} },  
 \end{equation*}
 since the random sum is finite for any $\omega \in \Omega$ and ${\theta} \le 1$. Let $f_0(x) = 1_{\{|x| >1 \}}(x)\,  |x|^{{\theta}}$, $x \in {\mathbb R}^d$; using a well-know result (cf. pages 145 and 150 in \cite{K} or Section 2.3.2 in \cite{A})  we get 
\begin{equation*}
\begin{array}{l}
E \big [ \sum_{0 < s \le t }  |\triangle L_s|^{{\theta}} \,   1_{ \{ |\triangle L_s| >1 
 \} } \big] = E \big [  \int_0^t \int_{\{ |x| > 1 \} } |x|^{{\theta}}  N(ds, dx) \big]\\ = \int_{{\mathbb R}^d} f_0(x) \nu (dx) = \int_{ \{ |x| >1 \}} |x|^{{\theta}} \nu (dx) 
\end{array}
\end{equation*}
and  so
 \begin{equation} \label{jui}
 E |C_t|^{{\theta}} \le  t \int_{\{ |x| >1 \}} |x|^{{\theta}} \nu (dx) 
 = c_0 \, t,\;\;\; t \ge 0. 
 \end{equation}
 \textit{Step II.}   Let $0 \le s< s' \le T$.
  Similarly to  the proof of Lemma \ref{modi1a} in this step we 
 establish   estimates for the random variable $J_{t,x, s,s' } = $ $ | Y_{t}^{s,x} -  Y_{t}^{{s'},x}|$.

If $t \le s$
we have $J_{t,x, s,s'} =0$, $x \in {\mathbb R}^d$. If ${s} \le t \le  s' $  then,  for any $x \in {\mathbb R}^d$, on some  almost sure event  $\Omega_{s,s'}$ (independent of $t$ and $x$)  we find
$$
 | Y_{t}^{s,x} -  Y_{t}^{{s'},x}|  \le \| b\|_0  |t-{s}| \le \| b\|_0  |s -{s'}|.
$$
Let $t \ge {s'}$ and  $N \ge 1$. We have (cf. \eqref{dir2}) 
\begin{gather} \label{dir2a}
\sup_{|x| \le N} | Y_{t}^{s,x} -  Y_{t}^{{s'},x}|  \le 2 |s-{s'}| \, \| b\|_{0} +    
 \sup_{|x| \le N} \int_{{s'}}^{t} | b(r,X_r^{s,x} )  -  b(r,X_r^{{s'},x} )|
  dr.  
\end{gather}
Moreover, there exists an almost sure event $\Omega'_{s,s'} \subset \Omega_{s,s'}$ such that on $\Omega_{s,s'}'$ we have for $r \in [s', T]$
\begin{gather*}
\sup_{|x| \le N} \,  | b(r,X_r^{s,x} )  -  b(r,X_r^{{s'},x} )| =  
 \sup_{|x| \le N} \, | b(r, X_r^{{s'}, X_{{s'}}^{s,x}})    - b(r, X_r^{{s'},x})  |  
\\
\le   [b]_{\beta,T} \sup_{|x| \le N}\|  Y^{s', X_{{s'}}^{s,x}}   -  Y^{s',x}   \|_{G_0}^{\beta}.
\end{gather*}
Now we use Lemma \ref{fi} with $p \ge  32d$ 
{\it to be fixed} 
 and get,  for any $r \in [s',T]$ on $\Omega_{s,s'}'$
(cf. \eqref{deb2b} and \eqref{deb2}) 
\begin{gather*} 
\sup_{|x| \le N} \,  | b(r,X_r^{s,x} )  -  b(r,X_r^{{s'},x} )|
\\ \nonumber 
\le [b]_{\beta, T} \,  [(|x| \vee |X_{{s'}}^{s,x}|)^{\frac{2d + 1}{p}} \vee 1]^{\beta}\,  U_{s',p}^{\beta} \, \sup_{|x| \le N} \, |x-X_{{s'}}^{s,x}|^{{\beta (1- \frac{2d}{p})}}
\end{gather*}
and so
\begin{gather} \label{deb23}
\sup_{|x| \le N} \,  | b(r,X_r^{s,x} )  -  b(r,X_r^{{s'},x} )|
 \le [b]_{\beta, T} \,   V_{s, s', N,p}^{\beta} \, \sup_{|x| \le N} \, |x-X_{{s'}}^{s,x}|^{\beta (1- \frac{2d}{p})},
\\ \nonumber 
V_{s, s', N,p} = \Big[ N^{ \frac{ 2d + 1 }{p} } + (2T \| b\|_0)^{\frac{2d + 1 }{p}} + |L_{s'} - L_s|^{\frac{2d + 1 }{p}} \Big]\,  U_{s' ,p},
 \end{gather}
 Coming back to \eqref{dir2a} we find for  $t \ge {s'}$
 on  $ \Omega_{s,s'}'$ 
\begin{gather*}
\sup_{s' \le t \le T }  \sup_{|x| \le N} | Y_{t}^{s,x} -  Y_{t}^{{s'},x}|  
\le 2 |s-{s'}| \, \| b\|_{0}   
\\ +  T \, 1_{ \big \{  \sup_{|x| \le N} |X_{{s'}}^{s,x} -x| \le 
c_0 |s-s'|^{1/8}   \big \}} 
 [b]_{\beta, T}  \, V_{s, s', N,p}^{\beta} \, \sup_{|x| \le N} \, |x-X_{{s'}}^{s,x}|^{ \beta (1- \frac{2d}{p}) }.
\\+  
 \, 2T \,  \| b \|_0   1_{ \big  \{  \sup_{|x| \le N} |X_{{s'}}^{s,x} -x| > c_0 |s-s'|^{1/8}   \big \}}, 
\end{gather*}
 with $c_0 >0$ such that 
 $c_0 \rho^{1/8} - \| b\|_0\, \rho$ $ \ge \rho^{1/8}, $ for any $ \rho \in [0,T]. $ 
 We obtain  
 on   $\Omega_{s,s'}'$ 
\begin{gather} \label{dh}
 \sup_{|x| \le N} \sup_{t \in [0,T]}| Y_{t}^{s,x} -  Y_{t}^{{s'},x}|  = \sup_{|x| \le N} \| Y^{s,x} -  Y^{{s'},x}\|_{G_0} 
\\
\nonumber \le C_1 |s-{s'}| \,    
+  C_1  \, 
   V_{s, s', N,p}^{\beta} \, |s-s'|^{\frac{\beta}{8} (1- \frac{2d}{p}) }
+  
 C_1    1_{ \big \{  \sup_{|x| \le N} |X_{{s'}}^{s,x} -x| > c_0 |s-s'|^{1/8}   \big \}},
 \end{gather}
 with $C_1 = 2(T \vee 1 ) \| b\|_{\beta, T} \, c_0^{\beta}$.
Since, for any $x \in {\mathbb R}^d$, 
 $|X_{{s'}}^{s,x} -x| 
 $ $ \le |s'-s| \|b\|_0 $ $ +  |L_{{s'}} - L_s|$
  and, moreover,   
 $c_0 |s-s'|^{1/8} - \| b\|_0 |s-s'|$ 
 $\ge |s-s'|^{1/8}$, we find on $\Omega_{s,s'}'$ 
\begin{gather} \label{mis}
\sup_{|x| \le N} \| Y^{s,x} -  Y^{{s'},x}\|_{G_0} 
 \\ \nonumber
\le  C_1 \big (  |s-{s'}| \,   
+  
   V_{s, s', N,p}^{\beta} \, |s-s'|^{\frac{\beta}{8} (1- \frac{2d}{p}) }
  +    1_{ \{    |L_s'- L_s| >   |s-s'|^{1/8}   \}}  \big).
\end{gather}
Note that $C_1$
 is independent of $s$, $s'$ and $N$.

\smallskip
\hh \textit{Step III.} Using \eqref{mis} we provide an estimate for 
$
d_0 (Y^s, Y^{s'}) 
$ (cf. \eqref{metr}) when  $0 \le s < s' \le T$.

We have (see \eqref{deb23})
$$
V_{s, s', N,p}^{\beta} \le  \Big[ N^{ \frac{\beta ( 2d + 1) }{p} } + (2T \| b\|_0)^{\frac{\beta (2d + 1) }{p}} + |L_{s'} - L_s|^{\frac{\beta (2d + 1) }{p}} \Big]\,  U_{s',p }^{\beta}
$$
and so 
\begin{gather*}
d_0 (Y^s, Y^{s'}) = \sum_{N \ge 1} \frac{1}{2^N} 
\frac{\sup_{|x|\le N} \|Y^{s,x} -  Y^{{s'},x} \|_{G_0} }{1+ \sup_{|x|\le N} \|Y^{s,x} -  Y^{{s'},x} \|_{G_0} } 
\\
 \le C_1 |s-s'| + C_1 1_{ \{    |L_s'- L_s| >   |s-s'|^{1/8}   \}}
\\
+ C_1 U_{s',p }^{\beta} \, |s-s'|^{\frac{\beta}{8} (1- \frac{2d}{p})}
 \sum_{N \ge 1} \frac{1}{2^N} 
\Big[ N^{ \frac{\beta ( 2d + 1) }{p} } + (2T \| b\|_0)^{\frac{\beta (2d + 1) }{p}} + |L_{s'} - L_s|^{\frac{\beta (2d + 1) }{p}} \Big]
\\
 \le C_3  \Big ( |s-s'| +  1_{ \{    |L_s'- L_s| >   |s-s'|^{1/8}   \}}
 +
 U_{s',p }^{\beta} \, |s-s'|^{\frac{\beta}{8} (1- \frac{2d}{p})}
 \big( 1  + |L_{s'} - L_s|^{\frac{\beta (2d + 1) }{p}} \big)
 \Big),
\end{gather*}
where $C_3 = C_3 (\beta, T,  \| b\|_{\beta,T},  d,p) >0$. Recall that $p \ge 32d $ has  to be fixed.

\hh \textit {Step IV.}  
Let now  $0 \le s_1< s_2 < s_3 \le T$ and set 
$$
\rho = s_3 - s_1.
$$
We will apply Corollary \ref{be1} with $q=  8/ \beta $.   
 Let us  fix $ p \ge 32 d$ (i.e., $1- \frac{2d}{p} \ge 15/16$)  such that $\frac{8 (2d +1)}{p} < \frac{{\theta}}{4}$   
and introduce the random variable
$$
 Z = 1 + \sup_{s \in [0,T]} |L_s|^{\frac{8 (2d + 1) }{p}},
$$
Clearly we have that $|L_{s'} - L_s|^{\frac{8 (2d + 1) }{p}}
 \le 2 Z$, $0 \le s < s' \le T$. Moreover  by Remark \ref{dw}  we know that
$E [Z^4] < \infty.$  Using Step III and the previous estimates  we will check condition \eqref{bez11}.  
In the sequel we denote by $C_k$ or $c_k$ positive constants which may depend on $\beta, T,  \| b\|_{\beta,T}, {\theta}$ and  $d$ 
but are independent of $s_1$, $s_2$ and $s_3$.
 We have 
\begin{gather*}
\Gamma = E \Big [ \Big ( d_0  (Y^{s_1} , Y^{{s_2}} )  \cdot 
  d_0 ( Y^{s_2} ,  Y^{{s_3}} ) \Big)^{8/\beta} \Big]
  \\ \le 
C_4 E \Big[  \Big ( |s_3 -s_1|^{8/\beta} +  1_{ \{    |L_{s_2}- L_{s_1}| >   |s_2 -s_1|^{1/8}   \}}  +
 Z \, U_{s_2,p }^{8} \, |s_3-s_1|^{1- \frac{2d}{p} }
  \Big) \, 
  \\
 \cdot \, 
\Big ( |s_3 -s_1|^{8/\beta} +  1_{ \{    |L_{s_3}- L_{s_2}| >   |s_3 -s_2|^{1/8}   \}}
 +
Z\,  U_{s_3,p }^{8} \, |s_3-s_1|^{1- \frac{2d}{p} }
  \Big) \Big]. 
\end{gather*}
We denote by $c_2 \ge 1$ a constant such that 
 $t^{8/\beta} \le c_2 t^{1 - \frac{2d}{p}}  $, $t \in [0,T]$. 
We obtain $(\rho = s_3 - s_1)$
\begin{gather*}
\Gamma \le 
c_2^2 \, C_4 E \Big[  \Big ( \rho^{1- \frac{2d}{p}} +  1_{ \{    |L_{s_2}- L_{s_1}| >    |s_2 -s_1|^{1/8}   \}}  +
 Z \, U_{s_2,p }^{8} \, \rho^{1- \frac{2d}{p} }
  \Big) \, 
  \\
 \cdot \, 
\Big ( \rho^{1- \frac{2d}{p}} +  1_{ \{    |L_{s_3}- L_{s_2}| >   |s_3 -s_2|^{1/8}   \}}
 +
Z\,  U_{s_3,p }^{8} \, \rho^{1- \frac{2d}{p} }
  \Big) \Big] 
  \\ 
  \nonumber \le C_5 ( \Gamma_1 + \Gamma_2 + \Gamma_3 + \Gamma_4),
\end{gather*}
where $\Gamma_1 =  E [ 1_{ \{    |L_{s_2}- L_{s_1}| >    |s_2 -s_1|^{1/8}   \}}  \cdot  1_{ \{    |L_{s_3}- L_{s_2}| >    |s_3 -s_2|^{1/8}   \}} ]$,
\begin{equation*}
\Gamma_2 =  \rho^{1- \frac{2d}{p}} [ P ( |L_{s_3}- L_{s_2}| >    |s_3 -s_2|^{1/8} +  P ( |L_{s_2}- L_{s_1}| >    |s_2 -s_1|^{1/8} )], 
\end{equation*}
$$
\Gamma_3 = \rho^{1- \frac{2d}{p}}  E [1_{ \{    |L_{s_3}- L_{s_2}| >   |s_3 -s_2|^{1/8}   \}}
 Z\,  U_{s_2,p }^{8}  + 
1_{ \{    |L_{s_2}- L_{s_1}| >   |s_2 -s_1|^{1/8}   \}}
Z\,  U_{s_3,p }^{8} ], 
$$
\begin{gather*}
 \Gamma_4  = \rho^{2 (1- \frac{2d}{p})}
    + \rho^{2 (1- \frac{2d}{p})} E [Z  U_{s_2,p }^{8} + 
     Z U_{s_3,p }^{8} + Z^2 U_{s_2,p }^{8} U_{s_3,p }^{8} ].
\end{gather*}
It is not difficult to treat $\Gamma_4$. Indeed we can use 
the Cauchy-Schwarz inequality and 
\begin{equation} \label{stimo} 
 \sup_{s \in [0,T]} E [U_{s,p}^{p}] = k'  < \infty 
\end{equation} (see \eqref{ri4}) in  order to  control the expectation in $\Gamma_4$. For instance, we have
\begin{equation} \label{nizza}
 E [Z^2 U_{s_2,p }^{8} U_{s_3,p }^{8}] \le
E [Z^4]^{1/2}  (\sup_{s \in [0,T]} E [U_{s,p }^{32}])^{1/2}
< \infty, \end{equation}
since $E [Z^4] < \infty$ and $ p \ge 32 d$. We obtain
\begin{equation} \label{one}
 \Gamma_4 \le C_6 \rho^{2 (1- \frac{2d}{p})} =
  C_6 |s_3 -s_1|^{2 (1- \frac{2d}{p})} \le C_6 |s_3 -s_1|^{30/16}.
\end{equation} 
 To estimates the other terms we need to control $ P ( |L_{s}| >    |s|^{1/8} ) $, $s \ge 0.$
To this purpose we use Step I.
We have 
\begin{gather*}
 P ( |L_{s}| >    s^{1/8} ) \le
  P ( |B_{s}| >    s^{1/8}/3 ) + 
   P ( |A_{s}| >    s^{1/8}/3 )
 +  P ( |C_{s}| >    s^{1/8}/3 ).
\end{gather*}
By Chebychev inequality we get for $s \ge 0$
\begin{gather} \label{vaii}
 P ( |L_{s}| >    s^{1/8} ) \le
 \frac{9}{s^{1/4}} E [|B_{s}|^2 + |A_s|^2]   + 
    \frac{3^{{\theta}}}{s^{{\theta}/8}} E [|C_s|^{{\theta}}] 
\le c_3  (s^{3/4} + s^{1- \frac{{\theta}}{8}}).    
\end{gather}
Using \eqref{vaii} and \eqref{stimo} we can estimate $\Gamma_2$
and $\Gamma_3$. For instance, since the increments of $L$  are independent and  stationary, we find   
\begin{gather*}
\Gamma_2 \le  \rho^{1- \frac{2d}{p}} [ P ( |L_{s_3 - s_2}| >    |s_3 -s_2|^{1/8}) +  P ( |L_{s_2 -s_1}| >    |s_2 -s_1|^{1/8} )] \\ \le 2 c_3 \rho^{1- \frac{2d}{p}} (\rho^{3/4} + \rho^{1- \frac{{\theta}}{8}}). 
\end{gather*}
We can proceed similary for $\Gamma_3$ (see also \eqref{nizza}):  
\begin{gather*}
\Gamma_3 \le  \rho^{1- \frac{2d}{p}} (E [Z^4])^{1/4}
(\sup_{s \in [0,T]} E [U_{s,p }^{32}])^{1/4} 
 \, \Big [ (P ( |L_{s_3 - s_2}| >    |s_3 -s_2|^{1/8})^{1/2} 
\\ +  (P ( |L_{s_2 -s_1}| >    |s_2 -s_1|^{1/8} ))^{1/2} \Big] 
\le C_8  \rho^{1- \frac{2d}{p}} (\rho^{3/8} + \rho^{\frac{1}{2} (1- \frac{{\theta}}{8}) }). 
\end{gather*}
Note that $(1- \frac{2d}{p}) + {3/8} >5/4 $  and  
 $(1- \frac{2d}{p})+ \frac{1}{2} (1- \frac{{\theta}}{8}) >5/4$.
 We get 
\begin{equation} \label{two}
 \Gamma_2 + \Gamma_3 \le C_9 \rho^{\frac{5}{4}} =
  C_9 |s_3 -s_1|^{5/4}.
\end{equation} 
Finally we consider  
\begin{gather} \label{three}
 \Gamma_1  \le 
 (P ( |L_{s_3 - s_2}| >    |s_3 -s_2|^{1/8}) \, 
 \cdot \, (P ( |L_{s_2 - s_1}| >    |s_2 -s_1|^{1/8})
\\ \nonumber 
\le  2c_3  (\rho^{3/2} + \rho^{2(1- \frac{{\theta}}{8})}) 
 \le  c_4 |s_3- s_1|^{3/2}.
\end{gather} 
Collecting together estimates \eqref{one}, \eqref{two} and \eqref{three} we arrive at 
$$
E \Big [ \Big ( d_0  (Y^{s_1} , Y^{{s_2}} )  \cdot 
  d_0 ( Y^{s_2} ,  Y^{{s_3}} ) \Big)^{8/\beta} \Big]
\le C_{0} |s_3- s_1|^{5/4}
$$
and this finishes the proof.
\end{proof}
Taking into account  Theorem  \ref{modi12} 
 and using  the projections $\pi_x$ (see \eqref{pro2}),
in the sequel we write, for $x \in {\mathbb R}^d$, $s,t \in [0,T]$, 
\begin{equation} \label{dobp}
Z^s = (Z^{s,x})_{x \in {\mathbb R}^d}, \;\;\; \text{with} \;  
\pi_x (Z^s)= Z^{s,x} \in G_0.
\end{equation}
Recall that on some almost sure event $\Omega_s$,  $Y^{s,x} 
 = Z^{s,x}$, $s \in [0,T]$, $x \in \R^d$
(cf. \eqref{cte}).  
\begin{lemma}
\label{le1} Under the same assumptions of Lemma  \ref{modi1a}
 consider the c\`adl\`ag process  $Z$ with values in $C(\R^d; G_0)$ of Theorem \ref{modi12}. The following statements hold:
\hh (i) There exists an almost sure event $\Omega_1$
 (independent of $s,t$ and $x$)
 such that for any $\omega \in {\Omega}_1$, we have
 that $t \mapsto L_t (\omega)$ is c\`adl\`ag, $L_0(\omega)=0$ and 
 $s \mapsto Z^s(\omega)$ is c\`adl\`ag; further, for any $\omega \in \Omega_1$, 
$$
  \;\; Z^{s,x}_t(\omega) = x + \int_s^{t} b(r, Z^{s,x}_r(\omega)  + L_{r}(\omega) - L_s(\omega))dr, \; s,t \in [0,T], \; s \le t, \,x \in {\mathbb R}^d.
$$
Moreover,  for $s \le t $, the r.v. $Z^{s,x}_t$ is ${\cal F}_{s,t}^L$-measurable (if $t \le s $, $Z^{s,x}_t =x$).

\hh (ii) There exists an almost sure event ${\Omega}_2$ and  a  ${\cal B}([0,T]) \times {\cal F}$-measurable function  $V_{n}: [0,T] \times \Omega \to [0, \infty]$, such that $\int_0^T V_{n}(s,\omega)ds < \infty$, for any integer $n >2d$, $\omega \in {\Omega}_2$, and, further, the following inequality holds on ${\Omega}_2$ 
\begin{equation} 
\label{tor34}
\sup_{t \in [0,T]} |Z_t^{s,x} - Z_t^{s,y}|  
\le  |x-y|^{\frac{n - 2d}{n}} [(|x| \vee |y|)^{\frac{2d + 1}{n}} \vee 1] \, V_{n}(s, \cdot),  \; x,y \in {\mathbb R}^d, \, s \in [0,T].
\end{equation} 
(iii) There exists an almost sure event ${\Omega}_3$ such 
that for any $\omega \in \Omega_3$ we have
\begin{equation}
 \label{flo2}
 Z_t^{s,x} (\omega) + L_u(\omega) - L_s(\omega)
  = Z_t^{u, \, Z_u^{s,x}(\omega) + L_u(\omega) - L_s(\omega)}\, (\omega), \,
\end{equation} 
for any $s,u,t \in [0,T]$, $0 \le s < u \le T$, $x \in {\mathbb R}^d$.
\end{lemma}
\begin {proof}

\hh {\bf (i)}
 On some almost sure event  $ \Omega_{s}'$ (independent of $t$  and $x$) we know that $(Y_t^{s,x})$ verifies the SDE \eqref{yrye}
 for any $x \in {\mathbb R}^d$ and $t \in [s,T]$. Moreover  
 $Y_t^{s,x} = x$, $t<s$. 

On the other hand  on some almost sure event  $\Omega_s$ we have $Y^{s,x}$ $ = \pi_x(Y^{s}) = \pi_x(Z^{s})$, for any $x \in {\mathbb R}^d$, see \eqref{dobp}.  Using   $(Z^s)$,
we can rewrite  \eqref{yrye} on the event  ${\Omega}_1 = \bigcap_{r \in \Q \cap [0,T]} (\Omega_{r}' \cap \Omega_r)$ as follows:
  \begin {equation} \label{d113}
[\pi_x(Z^{s})]_t = x + \int_s^t b(r,  [\pi_x(Z^{s})]_r + (L_r - L_s)) dr, 
\end{equation}
for any  $s \in \Q \cap [0,T]$, $t \in [s,T]$, $x \in \R^d.$ 

Note  that by Theorem \ref{modi12} for any $\omega \in \Omega$ and any sequence $s_n \to s^+$ we have  $d_0 ( Z^{s}  (\omega),  Z^{s_n} (\omega))$ $\to 0$ as $n \to \infty$. Take now $s \in [0,T)$ and let $(s_n) \subset \Q \cap [0,T]$ be a sequence monotonically decreasing to $s$. By the dominated convergence theorem and the right-continuity of $L$ we have on $\Omega_1$, for any $t > s$,  $x \in \R^d,$  
\begin{gather*}
[\pi_x(Z^{s})]_t = \lim_{n \to \infty} [\pi_x(Z^{s_n})]_t
= 
x + \lim_{n \to \infty}  \int_s^t 1_{\{ r > s_n \}} \, b(r,  [\pi_x(Z^{s_n})]_r + (L_r - L_{s_n})) dr
\\ 
= x + \int_s^t b(r,  [\pi_x(Z^{s})]_r + (L_r - L_s)) dr
\end{gather*}
and we get the assertion.

\hh {\bf (ii)} Since on  
 $\Omega$ we have $Y^{s,x}$ $ = \pi_x(Y^{s})$ we obtain by \eqref{ciao221} and \eqref{yy12},   for any $p \ge 2$,  
\begin{gather}
\label{cia} 
\sup_{s \in [0,T]}  \, E[\sup_{ s \le t \le T} |X_t^{s,x} - X_t^{s,y}|^p] 
= \sup_{s \in [0,T]}  \, E[\sup_{ 0 \le t \le T} |Y_t^{s,x} - Y_t^{s,y}|^p] 
\\
\nonumber 
= \sup_{s \in [0,T]}  \, E[\| \pi_x (Z^{s})  -  \pi_y (Z^{s}) \|^p_{G_0}] 
\le C(T) \,
 |x-y|^p, \;\;\;  x,\, y \in {\mathbb R}^d.
\end{gather}  
Let $s \in [0,T]$   and consider the random field $( \pi_x (Z^{s}))_{x \in {\mathbb R}^d}$ with values in ${G_0}$.    
Applying Theorem \ref{grr} with $\psi (x,\omega) = \pi_x (Z^{s})(\omega)$ we obtain from \eqref{cia} for $p > 2d$ similarly  to \eqref{est23}:
there exists a   $V_{p}(s, \omega ) \in [0, \infty]$ such that, for any  $\omega \in \!\Omega$, $x, y \in {\mathbb R}^d$, 
$s \in [0,T]$,
\begin{equation}
\label{est231}
\|  \pi_x (Z^{s})(\omega)   -  \pi_y(Z^{s}) (\omega)  \|_{G_0} \le [(|x| \vee |y|)^{\frac{2d + 1}{p}} \vee 1] \,   V_{p}(s,\omega) |x-y|^{1- 2d/p}, 
\end{equation} 
with 
$V_{p} (s,\omega) =  \Big(\int_{{\mathbb R}^d} \int_{{\mathbb R}^d} \Big( \frac{\| \pi_x(Z^{s}) (\omega)     -  \pi_y(Z^{s}) (\omega)    \|_{G_0}}{|x-y|}\Big)^p \;    f(x) f(y) dxdy\Big)^{1/p},$ $ \omega \in \Omega,$
$s \in [0,T]$ ($f$ is defined in Theorem \ref{grr}). Since the map: $(s,x, \omega)$ $ 
\mapsto \pi_x(Z^{s}) (\omega)$ is ${\cal B}([0,T] \times {\mathbb R}^{d} ) \times {\cal F}$-measurable with values in $G_0$, it follows that the real map: 
$$
(s,x,y, \omega) \mapsto {\| \pi_x(Z^{s}) (\omega)    -  \pi_y(Z^{s}) (\omega)    \|_{G_0}}{|x-y|^{-1}} \, 1_{\{ x \not = y\}}
$$ is ${\cal B}([0,T] \times {\mathbb R}^{2d} ) \times {\cal F}$-measurable. By the Fubini theorem
 we deduce  that also $V_p : [0,T] \times \Omega \to [0, \infty]$ is  ${\cal B}([0,T]) \times {\cal F}$-measurable.     
 Hence we can consider the  
 random variable $\omega \mapsto \int_0^T V_{p}(s,\omega)  ds $ (with values in $[0, \infty]$).
Since, with  the same constant $C(T)$ appearing   in \eqref{ciao221}, 
\begin{equation} \label{ri41}
\sup_{s \in [0,T]} \, E [ |V_{p}(s, \cdot)|^p] \le C(d)\cdot C(T),
\end{equation}  
  we  find
 $ E \Big[ \Big ( \int_0^T V_{p}(s, \cdot)  ds \Big)^p \Big] \le T^{{p-1}}
    \int_0^T E [ (V_{p}(s, \cdot))^p ] ds  \le T^{{2p-1}} c(d) C(T)
 < \infty.$
 It follows that, for any $p >2d ,$
  there exists an almost sure event $\Omega_p$ such that  
\begin{equation}
\label{dsw}
 \int_0^T V_{p}(s,\omega)  ds < \infty, \;\;\; \omega \in \Omega_p.
\end{equation} 
 Let $p=n$.
  We find, for any $n > 2d $, $\int_0^T V_{n}(s,\omega)  ds < \infty,$ when $ \omega$ $ \in  {\Omega}_2 $ $ = \bigcap_{n > 2d}\, \Omega_n$.

 Writing \eqref{est231} for $\omega \in \Omega_2$ and $n > 2d$
 we find the assertion.

\hh \hh {\bf (iii)} First note that the statement of Lemma \ref{fll}
can be rewritten in term of the process $Y^{s,x}$ (see \eqref{yy12}) as follows: for any $0 \le s < u \le T$
there exists an almost sure event ${\Omega}_{s, u}$ (independent of $t$ and $x$) such 
that, for any $\omega \in \Omega_{s,u}$, we have
\begin{equation}
 \label{flora}
 Y_t^{s,x} (\omega) + L_u(\omega) - L_s(\omega)
  = Y_t^{u, \, Y_u^{s,x}(\omega) + L_u(\omega) - L_s(\omega)}\, (\omega),  
  \; \text{for   $t \in [u,T],  x \in {\mathbb R}^d.$}
\end{equation} 
Since $(Z^{s})$ is a modification of $(Y^{s})$ (see Theorem \ref{modi12})  we know that 
on some almost sure event $\Omega_{s,u}'' \subset \Omega_{s,u}$ identity \eqref{flora} holds when $(Y^{s,x})$ is replaced by $(Z^{s,x})$.

 Let us fix $u \in (0,T]$. We know that \eqref{flora} holds for  $(Z^{s,x})$ when $t \in [u,T]$, $x \in \R^d$ and $s \in [0,u) \cap \Q$ if $\omega \in 
\Omega_{u} = \cap _{s \in  [0,u) \cap \Q} (\Omega_{s,u}'' \cap \Omega_1)$. 
 Using that $(Z^{s,x})$ with values in $G_0$ is in particular right-continuous in $s$, uniformly in $x$, when $x$ varies in compact sets of $\R^d$, it easy to check that \eqref{flo2}
 holds, for any  $0 \le s < u \le T$, $x \in {\mathbb R}^d$,
$ t \in [u,T]$, when $\omega \in  \Omega_u$. 

Let us define ${\Omega}_3 = \bigcap_{u \in \Q \cap [0,T)} \Omega_{u} $; fix any $s, u_0 \in [0,T]$, $x \in \R^d$, with $0 \le s < u_0 \le T$; we consider $\omega \in \Omega_3$ and prove that \eqref{flo2} holds for any $t \in [u_0,T]$.
 
 If $t =u_0$ the  assertion holds.
 Let us suppose that $t \in (u_0, T]$.
 we can find a sequence $(u_j) \in (u_0,t) \cap \Q$ such that $u_j \to u_0^+$. Since for any $j \ge 1$ we have 
\begin{equation}
 \label{flo2r}
 Z_t^{s,x} (\omega) + L_{u_j}(\omega) - L_s(\omega)
  = Z_t^{u_j, \, Z_{u_j}^{s,x}(\omega) + L_{u_j}(\omega) - L_s(\omega)}\, (\omega), \,
\end{equation}  
we can pass to the limit as $j \to \infty $ in both sides of the previous formula (taking also into account that $Z_{u_j}^{s,x}(\omega) + L_{u_j}(\omega) - L_s(\omega)$ belongs to a 
compact set $K_{x,s,\omega} \subset \R^d$ for any $j \ge 1$) 
and find that \eqref{flo2r} holds when $u_j$ is replaced by $u_0$. 
 The proof of \eqref{flo2} is complete. 
\end{proof}

\section{A Davie's type uniqueness  result}

Assertion (v) of the next theorem gives a Davie's type uniqueness result for SDE \eqref{SDE}. The other assertions  collect results of Section 4 (see in particular Theorem \ref{modi12} and Lemma \ref{le1}). These  are  used to prove the  uniqueness property (v). We refer to 
Corollaries \ref{unb} and \ref{cons} for the case when $b(t, \cdot)$ is only locally H\"older continuous.

\smallskip
{\sl We stress that all the next statements (i)-(v) hold when $\omega $
 belongs to an almost sure event $\Omega'$ (independent of $s$, $t \in [0,T]$ and $x \in {\mathbb R}^d$).}
\begin{theorem} \label{d32} Let us consider the SDE \eqref{SDE} 
with $b \in L^{\infty}(0,T; C_b^{0, \beta}(\R^d; \R^d))$, $\beta \in (0,1],$   
and suppose that $L$ and $b$ 
 satisfy
 Hypotheses \ref{primo} 
  and  \ref{zero}. 
 Then there exists  a  function  $\phi (s,t,x, \omega)$, 
\begin{equation} \label{nuova}
\phi : [0,T] \times [0,T] \times {\mathbb R}^d \times \Omega \to {\mathbb R}^d,
\end{equation}
which is ${\cal B}([0,T] \times [0,T] \times {\mathbb R}^d) \times   {\cal F} $-measurable
and 
such that $\big ( \phi(s,t,x , \cdot ) \big)_{t \in [0,T]}$
is a strong solution of \eqref{SDE} starting from $x$ at time $s.$
 Moreover, there exists an almost sure event $\Omega'$ 
  such that 
{\sl the following assertions hold for any $\omega \in \Omega'$.}
 
\hh \hh 
(i) For any $x \in {\mathbb R}^d$,  the mapping: 
 $s \mapsto \phi (s,t,x, \omega)$ is c\`adl\`ag on $[0,T]$  (uniformly in $t$ and $x$), i.e., 
  let $s \in (0,T)$ and consider sequences $(s_k) $ and $(r_n)$ such that  $s_k \to s^-$ and $r_n \to s^+$; we have, for any $M>0$,
\begin{gather} \label{sd}
 \lim_{n \to \infty} \sup_{|x| \le M}\sup_{t \in [0,T]} |\phi (r_n,t,x, \omega) - \phi (s,t,x, \omega) | =0,
\\ \nonumber  
\lim_{k \to \infty} \sup_{|x| \le M}\sup_{t \in [0,T]} |\phi (s_k,t,x, \omega) - \phi (s-,t,x, \omega) | =0
\end{gather}
(similar conditions hold   when $s =0$ and  $s=T$). 
\hh (ii) For any $x \in {\mathbb R}^d$, $s \in [0,T]$,  
 $ \phi (s,t,x, \omega) =x $ if $\, 0 \le t \le s$, and 
\begin{equation} \label{d566}
 \phi (s,t,x, \omega) = x + \int_{s}^{t}b\left(r,  \phi (s,r,x, \omega) \right) dr + L_t(\omega) - L_s(\omega),\;\;\; t \in [s,T].
\end{equation} 

\hh (iii) For any $s \in [0,T]$, the function $x \mapsto \phi (s,t,x, \omega)$ is continuous in $x$ uniformly in $t$. Moreover, 
 for any  integer $n > 2d$, there exists a ${\cal B}([0,T])\times {\cal F}$-measurable function  $V_n : [0,T] \times \Omega  \to [0, \infty]$ such that $\int_0^T V_n(s , \omega) ds < \infty$ and 
\begin{gather} 
\label{toro}
\sup_{t \in [0,T]} |\phi (s,t,x, \omega)  - \phi (s,t,y, \omega)| 
\\ \nonumber 
\le V_n(s, \omega) \,  |x-y|^{\frac{n - 2d}{n}} [(|x| \vee |y|)^{\frac{2d + 1}{n}} \vee 1],\;\;\; x,y 
\in {\mathbb R}^d, \, n > 2d , \; s \in [0,T].  
\end{gather} 
(iv) For any $0 \le s <  r \le t \le T$, $x \in {\mathbb R}^d$, we have 
\begin{equation} \label{fl2}
 \phi (s ,t,x, \omega) = \phi (r ,t, \phi (s ,r,x, \omega), \omega).
\end{equation} 
(v) 
  Let $s_0 \in [0,T)$, $\tau = \tau(\omega) \in (s_0, T]$  and  $x \in {\mathbb R}^d$. If  a measurable  function  
 $g:  [s_0,\tau ) \to  {\mathbb R}^d$ solves the integral equation
\begin{equation} \label{integral} 
g(t) =  x + \int_{s_0}^{t}b\left(r, g(r)\right)  dr 
 +  L_{t} (\omega) - L_{s_0}(\omega),\;\;\; t \in [s_0,\tau), 
\end{equation}
then we have $g(r) = \phi(s_0,r,x, \omega)$, for $r \in [s_0,\tau)$. 
\end{theorem}
\begin{proof}  Let us consider the process $Z= (Z^s)_{s \in [0,T]}$ of Theorem \ref{modi12} with values in $C({\mathbb R}^d; {G_0})$. 
 Recall the notation $Z^{s,x}_{t} = \pi_x(Z^s)(t)$ (see \eqref{pro2}).
We define for $\omega \in \Omega, \; s,t \in [0,T], \; x \in {\mathbb R}^d$:   
\begin{equation} \label{d114}
\phi (s,t,x,\omega) = Z^{s,x}_{t}(\omega) + L_{t}(\omega) - L_s(\omega),\;\;\; \text{if}  \,\, s \le t,
\end{equation}
and $\phi (s,t,x,\omega) =  x$ if  $s > t$.
The   fact that, for any $0 \le s< t \le T$, $x \in {\mathbb R}^d$,  the random variable $\phi(s,t,x, \cdot)$ is ${\cal F}_{s,t}^L$-measurable follow from Theorem \ref{modi12} and (i) in Lemma \ref{le1}.
 We also define 
$$
\Omega' = \Omega_1 \cap \Omega_2 \cap \Omega_3,
$$
where the almost sure events $\Omega_k$, $k=1,2,3$, are considered in Lemma \ref{le1}. 

\hh Assertions  {\bf (i), (ii), (iii), (iv) } follow directly from Theorem \ref{modi12} and Lemma \ref{le1}.  
More precisely, (i) and (ii) follow from the first assertion of Lemma \ref{le1} since $(Z^s)$ takes  values in $C({\mathbb R}^d; G_0)$ with c\`adl\`ag paths.
 Assertions (iii)  and (iv) follow  respectively from  the second and third assertion of Lemma \ref{le1}.  

\hh \hh {\bf (v)}
Let $\omega \in \Omega'$ be fixed  and let    $g : [s_0,\tau[ \to {\mathbb R}^d$ be a  solution to the integral equation \eqref{integral} corresponding to $\omega$.
 Let us fix $t \in (s_0, \tau)$.

We introduce  an  auxiliary function $f: [s_0, t] \to {\mathbb R}^d$ which is similar  to  the one used in proof  of Theorem 3.1 in \cite{Sh},
 \begin{equation} \label{fun2}
f(s) = \phi(s,t, g(s), \omega),\;\;\; s \in [s_0,t].
\end{equation}
We will  show
 that $f$ is 
constant on $[s_0,t]$. Once this is proved   we can deduce that 
 $f(t) = f(s_0)$ and so  we find
 $ g(t) $ $= \phi(s_0,t, x, \omega)$
 which  shows the assertion  since $t$ is arbitrary.
 In the  sequel  we proceed in three  steps.

\hh \textit{Step I. } We  establish some  estimates  for 
 $| g(r)  -  \phi(u,r, g(u), \omega) | $
when $s_0 \le u \le r \le t$.

Since
$$
g(r) = x + \int_{s_0}^u b(p, g(p)) dp + (L_u(\omega) - L_{s_0}(\omega)) +   \int_u^r b(p, g(p)) dp + (L_r(\omega) - L_u
(\omega)),$$ 
we obtain
\begin{gather*}
\nonumber | g(r)  -  \phi(u,r, g(u), \omega) |  \le \Big | g(u) + \int_u^r b(p, g(p))dp  + ( L_r(\omega) -L_u(\omega)  ) - g(u) \\  -  \int_u^r b(p, 
 \phi(u,p, g(u), \omega))dp  - ( L_r(\omega) -L_u(\omega) ) \Big|
\\ \nonumber
\le    \int_u^r | b(p,g(p)) -  b(p, 
 \phi(u,p, g(u), \omega))| dp \le 2 \| b\|_0 \, |r-u|.
\end{gather*}
Now using the H\"older continuity
 of $b$:
\begin{gather}  \label{gg}
| g(r)  -  \phi(u,r, g(u), \omega) |    
\le    \int_u^r | b(p,g(p)) -  b(p, 
 \phi(u,p, g(u), \omega))| dp  
\\ \nonumber \le [b]_{\beta, T}  \int_u^r  | g(p) -   
 \phi(u,p, g(u), \omega)|^{\beta}dp 
\\ \nonumber
\le  \, 
 (2 \| b\|_0)^{\beta} \, [b]_{\beta,T} 
  \, \int_u^r 
 |p-u|^{\beta}  \, dp
 \le (2 \| b\|_0)^{\beta} \, [b]_{\beta,T}  \,   |r-u|^{1+ \beta}.
\end{gather}
\textit{II Step.} We prove that $f$ defined in \eqref{fun2} is continuous on $[s_0,t]$.

\smallskip
We first show that it is right-continuous on $[s_0,t)$.
Let us fix $s \in [s_0,t)$  and consider a sequence  $(s_n)$ such that  $s_n \to s^+$. We prove that $f(s_n) \to f(s)$
as $n \to \infty.$
 Note that 
$|g(r)| \le M_0$, $r \in [s_0,\tau)$, where $M_0 =|x| + T \| b\|_0 + C(\omega)$. We have
 \begin{gather*}
|f(s_n) - f(s)| \le |\phi (s_n,t, g(s_n), \omega) - \phi (s,t,g(s_n), \omega)  |\\  +
|\phi (s,t, g(s_n), \omega) - \phi (s,t,g(s), \omega) |
  \le  J_n + I_n, 
\end{gather*}  
 {where} $I_n =  |\phi (s,t, g(s_n), \omega) - \phi (s,t,g(s), \omega)|$ and 
 $$J_n =  \sup_{|x| \le M_0}\sup_{t \in [0,T]} |\phi (s_n,t,x, \omega) - \phi (s,t,x, \omega) |.
 $$
Since $g(s_n ) \to g(s)$ by the right continuity of $g$ we obtain that $\lim_{n \to \infty} I_n =0$ thanks to \eqref{toro}. Moreover  $\lim_{n \to \infty} J_n =0$  thanks to \eqref{sd}.

Let us  show that $f$  is left-continuous on $(s_0,t]$.
We fix $s \in (s_0,t]$  and consider a sequence  $(s_k) \subset (s_0,s)$ such that  $s_k \to s$. We prove that $f(s_k) \to f(s)$
as $k \to \infty.$
 Using the flow property (iv) we find  
\begin{gather*}
|f(s_k) - f(s)| =  |\phi (s_k,t, g(s_k), \omega) - \phi (s,t,g(s), \omega)  |\\  
= |\phi (s,t, \phi (s_k,s, g(s_k), \omega) , \omega) - \phi (s,t,g(s), \omega)| .
\end{gather*}
By I Step  we know that  
\begin{equation} \label{dani}
|\phi (s_k,s, g(s_k), \omega)  -   g(s)| 
\le  2 \| b\|_0 |s_k -s|^{}
\end{equation} which tends to $0$
 as $k \to \infty$.  Using \eqref{dani} and the continuity property (iii)
we obtain  the claim since
$$
 \lim_{k \to \infty} |\phi (s,t, \phi (s_k,s, g(s_k), \omega), \omega) - \phi (s,t,g(s), \omega) | =0.
$$
\textit{III Step.} We prove that $f$ is constant on $[s_0,t]$.

We will use the following well known lemma (see, for instance, pages 239-240 in \cite{Y}): 
 {\sl Let $S$ be a real Banach space and consider a continuous mapping $F: [a,b] \subset \R \to S$, $b>a$.  Suppose that for any $h \in (a,b]$ there exists the left derivative 
\begin{equation}
 \label{left}
 \frac{d^- F}{dh}(h) = \lim_{h' \to h^-} \frac{F(h' ) - F(h) }{h'-h} 
\end{equation} 
and this derivative is identically zero   on $(a,b]$. Then $F$ is constant.
}

Note that by considering continuous linear functionals on $S$ one  may reduce  the proof of the  lemma to the one of a 
 real analysis  result.

To apply  the previous lemma with $[s_0,t] = [a,b]$ we first 
extend our function $f$ to $[s_0, \infty)$ by setting 
 $f(r) = f(t)$ for $r \ge t$. 
 Then set $S = L^1([0,t] ; {\mathbb R}^d)$ and define $F : [s_0,t] \to S$  as follows:
$
 F(h) = f(\cdot + h) \in S$, $  h \in [s_0,t],
$ i.e., 
$
F(h)(r) = f(r+h), \; 
r \in [0,t]. 
$

  If we prove that the mapping $F$ is constant then we deduce
(taking $h=s_0$ and $h=t$) that $f(s_0 + \cdot) = f(t + \cdot) = f(t)$ in $S$. However, since $f$ is continuous this  implies that $f$ is constant and finishes the proof.

The continuity of $F$, i.e., for any $h \in [s_0,t]$, we have
$$
\lim_{h' \to h} \| F(h) - F(h') \|_S = \lim_{h' \to h} \int_{0}^t |f(r+h) - f(r+h')|dr =0,
$$
 is clear, using the continuity of $f$. Let us prove that the left derivative of $F$ is identically  zero on $(s_0,t]$.
 
 Using the flow property (iv)  we find, for  $h, h' \in [s_0, t]$, $h' < h$ 
  and $0 \le r \le t -h$,
\begin{gather} \label{droo}
|f(r + h) - f(r+h')|  
\\ \nonumber = | \phi(r+h,t, g(r+h), \omega)  -  \phi(r+h,t,  \phi(r+h',r+h, g(r+ h'), \omega) , \omega) |.
\end{gather}
Using \eqref{droo} and changing variable we obtain  (recall that $f(r) = f(t)$, $r \ge t$)  
 \begin{gather} 
\label{torta}
\int_{0}^t |f(r+h ) - f(r+h')| dr  
\\ 
\nonumber = 
\int_{ 0}^{t-h} 
| \phi(r+h,t, g(r+h), \omega)  -  \phi(r+h,t,  \phi(r+h',r+h, g(r+ h'), \omega) , \omega) |dr
\\ \nonumber
+ \int_{t-h}^{t-h'} |f(t ) - f(r+h')| dr 
\\ \nonumber
= \int_{  h}^{t} 
  | \phi(p,t, g(p), \omega)  -  \phi(p,t,  \phi(p+h'-h,p, g(p+ h'-h), \omega) , \omega) |dp 
 \\ \nonumber
+\int_{t-h}^{t-h'} |f(t ) - f(r+h')| dr. 
\end{gather}
In order to estimate $\| F(h) - F(h') \|_S $
let us denote by $\lambda_f$ the modulus of continuity of $f$. Since in the last integral $t- h +h' \le  r+h' \le t$ we have the estimate
$$
\int_{t-h}^{t-h'} |f(t ) - f(r+h')| dr \le |h-h'|\, \lambda_f(|h-h'|)
$$
 and $\lim_{r \to 0^+} \lambda_f(r) =0$.
Taking into account that 
there exists a  constant $N_0 = N_0(x,T, \| b\|_0, \omega) \ge 1$ such that  
$$
 |g(r) | +  |\phi(r,u, g(r), \omega)| \le N_0, \;\;\; s_0 \le r \le u \le T, 
$$
we find for $p \in [h,t]$, $n > 2d$
 (see \eqref{toro} and \eqref{gg})
\begin{gather*} | \phi(p,t, g(p), \omega)  -  \phi(p,t,  \phi(p+h'-h,p, g(p+ h'-h), \omega) , \omega) | 
 \\ \le V_n(p, \omega) \,  | 
 g(p) -  \phi(p+h'-h,p, g(p+ h'-h), \omega) |^{\frac{n - 2d}{n}}  N_0^{\frac{2d + 1}{n}}
\\ \le (2 \| b\|_0)^{\beta ({\frac{n - 2d}{n}})} \, [b]_{\beta,T}^{{\frac{n - 2d}{n}}}  
\, \,  V_n(p, \omega) \,   | h'-h|^{(1+ \beta) ({\frac{n - 2d}{n}})}\,    N_0^{\frac{2d + 1}{n}}.  
\end{gather*}
 Recall that $V_n(p, \omega ) \in [0, \infty]$ but  $\int_0^T V_n(p , \omega) dp < \infty$.
Using the previous inequality and \eqref{torta} we obtain 
for  $h, h' \in [s_0, t],$ $ h' < h$
\begin{gather} \label{dii}
\int_{0}^t |f(r+h ) - f(r+h')| dr  \\  \nonumber \le 
C_0  | h'-h|^{(1+ \beta) ({\frac{n - 2d}{n}})} 
\int_{ 0}^{T} 
  V_n(p, \omega) dp 
 + |h-h'|\, \lambda_f(|h-h'|),  
\end{gather} 
where $C_0 = C_0 (\beta, \|b\|_{\beta, T}, \omega, T,x, n,d) >0$.
 Now we choose $n$  large enough such that $(1+ \beta) ({\frac{n - 2d}{n}}) >1$. Dividing by $|h-h'| $ and passing to the limit as $h' \to h^-$ in \eqref{dii}  we find 
$$ 
 \lim_{h' \to h^-} \frac{1}{{|h-h'|} } { \| F(h) - F(h') \|_{L^1 ([0,t]; {\mathbb R}^d)} } =0. 
$$
This shows that there exists the left derivative of $F$ in each $h \in (s_0,t]$ and this derivative is identically  zero on $(s_0,t]$. By the lemma mentioned at the beginning of III Step  we obtain that $F$  is constant. Thus $f$ is constant on $[s_0,t]$ and this finishes the proof.
\end{proof} 
\begin{remark} \label{uniq}
{\em  Note that if $g: [s_0,\tau] \to \R^d$, $\tau = \tau(\omega)\in (s_0, T]$, solves \eqref{integral} on $[s_0, \tau]$ then  we  have  $g(\tau) = \phi(s_0,\tau,x, \omega)$, $\omega \in \Omega'$. Indeed applying (v) on $[s_0, \tau)$ we can use that
$\int_{s_0}^{\tau}b\left(r, g(r)\right)  dr $ $= 
 \int_{s_0}^{\tau}b\left(r, \phi(s_0,r,x, \omega) \right)  dr. $
}
\end{remark}

\begin{remark} \label{uniq1}
{\em  It is a natural question if one can 
 improve \eqref{toro} in Theorem  \ref{d32}.
 A possible stronger assertion could be  the following one:
for each $\alpha \in (0,1)$ and  $N \in \R$ one can find $C(\alpha, T, N, \omega) < \infty$ such that, for any $x,y \in \R^d$, $|x|,$ $|y| <N$,
\begin{gather} 
\label{toro2} \sup_{s \in [0,T]}
\sup_{t \in [s,T]} |\phi (s,t,x, \omega)  - \phi (s,t,y, \omega)| 
\le C(\alpha, T, N, \omega) |x-y|^{\alpha}, \;\; \omega \in  \Omega'.
\end{gather}
This condition is stated as property  4 in Proposition 2.3 of \cite{Sh} for SDEs \eqref{SDE} when $L$ is a Wiener process and  $b \in L^q ([0,T]; L^p(\R^d))$, $d/p + 2/q <1$.

Assuming  $b \in L^{\infty}(0,T; C_b^{0, \beta}(\R^d; \R^d))$
we do not expect that \eqref{toro2} holds in general    when  $L$ and $b$  satisfy Hypotheses  \ref{primo} 
  and  \ref{zero}. Indeed a basic strategy to get \eqref{toro2} when $L$ is a Wiener process is to use the  Kolmogorov-Chentsov  test to obtain a H\"older continuous dependence on $(s,t,x)$;  one cannot use this approach when $L$ is a discontinuous process. 
  Finally  note that the proof of \eqref{toro2} given in \cite{Sh} is not complete
(\eqref{toro2} does not follow directly
 from estimate (4)   in page 5 of \cite{Sh} applying the   Kolmogorov-Chentsov  test). 
}
\end{remark}

Now we present two corollaries of  Theorem \ref{d32} which deal with  SDEs (\ref{SDE})  with possibly unbounded  $b$.

When $b: [0,T] \times {\mathbb R}^d \to {\mathbb R}^d$ is  measurable  and satisfies, for any 
 $\eta \in C_0^{\infty} (\R^d)$,  $b \cdot \eta \in L^{\infty}(0, T; C_{b}^{0, \beta}(\R^d; \R^d))$  we say that  $ b \in L^{\infty}(0, T; C_{loc}^{0, \beta}(\R^d; \R^d))$.
By a   localization procedure we get
\begin{corollary} \label{unb} 
Let  $b \in L^{\infty}(0,T; C_{loc}^{0, \beta}(\R^d; \R^d))$, $\beta \in (0,1],$ and suppose that, for any $\eta \in C_0^{\infty} (\R^d)$, the L\'evy process $L$  and $b \cdot \eta$ satisfy Hypotheses \ref{primo} and \ref{zero}. 

Then there exists an almost sure event $\Omega''$ such that, for any $\omega'' \in \Omega''$, $x \in \R^d$, 
$s_0 \in [0,T)$ and $\tau = \tau(\omega'') \in (s_0, T]$, if $g_1 , g_2 :
 [s_0,\tau ) \to  {\mathbb R}^d$ are c\`adl\`ag solutions of 
\eqref{integral} when $\omega = \omega''$, starting from $x$, then
$g_1(r) = g_2(r)$, $r \in [s_0, \tau)$.
\end{corollary}
\begin{proof} Let $\varphi \in C_0^{\infty}(\R^d)$ be such that $\varphi =1$ on $\{|x|\le 1 \}$ and $\varphi(x) =0$ if $|x|>2$.
 Set $b_n(t,x) = b(t,x) \varphi(\frac{x}{n})$, $t \in [0,T]$, $x\in \R^d$ and $n \ge 1$. Consider for each $n $ an almost sure event $\Omega'_n$ related to $b_n \in L^{\infty}(0,T; C_b^{0, \beta}(\R^d; \R^d))$  
by Theorem \ref{d32};   set 
$\Omega'' = \cap_{n \ge 1} \Omega_n'$. 
 Suppose that 
 $g_1, g_2$ are solutions of \eqref{integral} for a fixed $\omega'' \in \Omega''$.
 Let
 $\tau_k^{(n)} = \tau_k^{(n)}(\omega'')= \inf \{ t \in [s_0,\tau) \, :\, |g_k (t)| \ge n \}$, $k =1,2$ (if $|g_k(s)|< n$, for any $s \in [s_0,\tau )$ then we
 set $\tau_k^{(n)}=\tau$). Define $\tau^{(n)} = \tau_1^{(n)} \wedge \tau_2^{(n)} $ and note that on $\Omega''$ $\tau^{(n)} \uparrow \tau$ as $n \to \infty$. Since on 
 $[s_0, \tau^{(n)}(\omega''))$ both $g_1$ and $g_2$ solve an equation like \eqref{integral} with $b$ replaced by $b_n$ and $\omega = \omega''$
 we can apply  (v)  of Theorem \ref{d32}
 and conclude that $g_1 = g_2$ on $[s_0, \tau^{(n)}(\omega''))$. Since this holds for any $n \ge 1$ we  get that 
$g_1 = g_2$ on $[s_0,\tau (\omega'') )$. 
 \end{proof}

Next we   construct  $\omega$ by $\omega$   strong solutions to \eqref{SDE} when $b$ is possibly unbounded. 
 To simplify we deal with the initial time  $s=0$.
\begin{corollary} \label{cons} 
 Suppose that $L$ and $b$ verify the assumptions of Corollary \ref{unb}. Moreover assume that 
\begin{equation} \label{linear1}
|b(t,x) | \le C (1+ |x|),\;\; x \in \R^d, \; t \in [0,T],
\end{equation} 
for some constant $C>0$. Let  $x \in \R^d$ and $s=0$. Then there exists a (unique) strong solution to \eqref{SDE} starting from $x$.
\end{corollary}
\begin{proof}
 We know that  $t \mapsto L_t(\omega)$ is c\`adl\`ag for any $\omega \in \Omega'$, where   $\Omega'$ is an almost sure event. 
When  $\omega \in \Omega'$  a standard argument based on 
the Ascoli-Arzela theorem
shows that there exists a continuous solution $v=$ $v(\cdot, \omega)$ to $
 v(t) = x + \int_0^t b(s,v(s) + L_s(\omega)) ds
 $ on $[0,T]$.
We define  $v(t, \omega) =0$, if $\omega \not \in \Omega'$, $t \in [0,T]$.
By using the function $\varphi$ as in the proof of Corollary \ref{unb} we  introduce  $b_n(t,x) = b(t,x) \varphi(\frac{x}{n})$, $t \in [0,T]$, $x\in \R^d$ and $n \ge 1$. According to Theorem \ref{d32}  for each $n $ there exists  a  function $\phi_n$ as in \eqref{nuova} and
an almost sure event $\Omega'_n$ corresponding to $b_n$ such that assertions (i)-(v) hold.   
Set 
$\Omega''$ $ = (\cap_{n \ge 1} \Omega_n') \cap \Omega'$. 

Define  $g(t, \omega) = v(t, \omega) + L_t(\omega)$, $t \in [0,T]$, $\omega \in \Omega,$ and 
set 
 $\tau^{(n)} = \tau^{(n)}(\omega)= $ $\inf \{ t \in [0,T) \, :\, |g (t, \omega)| \ge n \}$  (if $|g(s, \omega)|< n$, for any $s \in [0,T )$ then we
 set $\tau^{(n)}(\omega)= T$).  Note that on $\Omega''$ we have $\tau^{(n)} \uparrow T$ as $n \to \infty$.

 Let $\omega \in \Omega''$ and  $n \ge 1$. Since on  $[0, \tau^{(n)}(\omega))$  $g(\cdot, \omega)$   solves an equation like \eqref{integral} with $s_0=0$ and $b$ replaced by $b_{n+k}$, $k \ge 0,$  
we  can apply  (v)  of Theorem \ref{d32} and get that $g(t, \omega) 
= \phi_{n + k} (0,t,x, \omega)$, for any $t \in [0, \tau^{(n)}(\omega))$,  $k \ge 0.$ Since $\tau^{(n)} \uparrow T$ we deduce that, uniformly on compact sets of $[0,T)$, for any $\omega \in \Omega''$, we have  
$
\lim_{n \to \infty} \phi_{n } (0,t,x, \omega)$ $ = g(t, \omega).
$
It follows that $g(t, \cdot)$ is ${\cal F}_t^L$-measurable,  for any $t \in [0,T)$. By setting $g(T, \omega)$ $ =   x + \int_{0}^{T}b\left(r, g(r, \omega)\right)  dr $ 
 $+  L_{T} (\omega)$, we get that 
 $(g(t, \cdot))$ is a strong solution on $[0,T].$
\end{proof}
\begin{remark}{\em The previous condition \eqref{linear1} can be relaxed, by requiring that, for fixed $x \in \R^d$, $s=0$ and $\omega \in \Omega'$, there exists a continuous solution to  the integral equation 
$v(t)$ $ = x + \int_0^t b(s,v(s) + L_s(\omega)) ds$
  on $[0,T]$. The assertion about existence and uniqueness of a strong solution starting from $x$ remains true.}
\end{remark}

\section{Uniqueness for SDEs driven by 
stable  L\'evy processes}

In this section using also results from  \cite{Pr10} and  \cite{Pr14} we show that  Theorem \ref{d32} can be applied
 to a class of SDEs driven by non-degenerate $\alpha$-stable type   processes $L$. Let $s \ge 0$, we are considering
 \begin{equation}
 \label{66}
 X_{t}(\omega) = x + \int_{s}^{t}b\left( X_{u}(\omega) \right)  du \, + \, L_{t}(\omega) - L_s(\omega),
 \quad
\end{equation}
  $x \in {\mathbb R}^d, d \ge 1$, $ t \ge s,$ where $b \in 
  C_b^{0, \beta}({\mathbb R}^d ,{\mathbb R}^d )$, $\beta \in [0,1].$ 
 We  deal with   {\sl pure-jump L\'evy process} $L$ (without drift term), i.e., we assume that the  generating triplet is $(\nu, 0,0)$ (i.e., $Q=0$ and $a=0$ as in  \eqref{d67}).
To state our assumptions on $L$  we   use  the convolution
  semigroup $(P_t)$  associated to
$L$ (or to its L\'evy measure $\nu$) and acting on $C_b({\mathbb R}^d)$, i.e.,  $P_t: 
C_b({\mathbb R}^d) $ $\to C_b({\mathbb R}^d)$, $t \ge 0$, 
 \begin{gather*} 
P_t f(x) = E [f(x+ L_t)] = \int_{{\mathbb R}^d} f(x + z)\,  \mu_t (dz), \;\; t>0, \; f \in
C_b({\mathbb R}^d), \; x \in {\mathbb R}^d,
\end{gather*}
 where $\mu_t$ is the law of $L_t$, and
  $P_0 = I$ (cf. \cite{sato} or \cite{A}).
 The 
 generator $\mathcal L$ of $(P_t)$ is 
 \begin{equation} \label{lll}
 {\cal L}g(x) =
\int_{{\mathbb R}^d} \big(  g (x+y) -  g (x)
   - 1_{ \{  |y| \le 1 \} } \, \lan y , D g (x) \ran \big)
   \, \nu (dy),\;\; x \in {\mathbb R}^d,
 \end{equation}
with $g \in C_0^{\infty}(\R^d)$ (see  Section 6.7 in \cite{A} and Section 31 in  \cite{sato}).
 We now consider the Blumenthal-Getoor index $\alpha_{0} = \alpha_{0}(\nu)$ (see \cite{BG}):
\begin{equation}
\label{bgg}
\alpha_{0} = \inf \Big\{ \sigma >0 \;\; : \;\; { \int}_{\{ |x| \le 1  \, \}} |y|^{\sigma} \nu (dy) \Big \}< \infty;
\end{equation} 
 we always have $\alpha_{0} \in [0,2]$. In the sequel we  require that $\alpha_0 \in (0,2)$.
 Similarly to 
 \cite{Pr14} we make the following assumption on the L\'evy measure $\nu$.
\begin{hypothesis} \label{nondeg1}
{\em  Let $\alpha_0 \in (0,2).$ The convolution  semigroup $(P_t)$ verifies: $P_t (C_b({\mathbb R}^d)) \, \subset
C^1_b({\mathbb R}^d)$, ${t>0}$,
 and, moreover, there exists 
$c_{\alpha_{0}}= c_{\alpha_0}(\nu) >0$ 
  such that
 \begin{align} \label{grad}
 \sup_{x \in {\mathbb R}^d}| D P_t f(x)| \le {c_{\alpha_{0}}}\,{t^{-\frac{1}{ \alpha_{0} } }} \cdot \sup_{x \in {\mathbb R}^d}| f(x)|,\;\; \;
t \in (0,1], \; f \in C_b ({\mathbb R}^d). \qed
\end{align}
}
\end{hypothesis}
Note that Hypothesis \ref{nondeg1}  
implies both Hypotheses 1 and 2 in \cite{Pr14} (taking $\alpha = \alpha_0$). Indeed since  $\alpha_0 \in (0,2)$ we have  
    $
 \int_{\{ |x| \le 1  \, \}} |y|^{\sigma} \nu (dy) < \infty $, 
 for   $\sigma > \alpha_0$. 
  To check  the validity of the gradient estimate \eqref{grad}
we only  mention a
 criterion which is given in \cite{Pr14}; it is based on  Theorem 1.3 in \cite{SSW}.
\bth \label{pri1} Let $L$  be a pure-jump L\'evy process.  
 A sufficient condition in order that \eqref{grad}    holds 
 when $\alpha_0 $ replaced by  $\gamma \in (0,2)$
  is the following one:
 the L\'evy measure $\nu$ of $L$ verifies:
$
 \nu (B) \ge \nu_1(B), $ $ B \in {\cal B}({\mathbb R}^d),
$
where $\nu_1$ is a L\'evy measure on ${\mathbb R}^d$ such that its corresponding symbol
 $
\psi_1 (h)$ $ = - \int_{{\mathbb R}^d} \big(  e^{i \langle h,y \rangle }  - $ $
 1 - \, { i \langle h,y
\rangle} \, 1_{ \{ |y| \le 1 \} } \, (y) \big ) \nu_1 (dy),
$
satisfies, for some positive constants $c_1$, $c_2$  and  $M$,  
\begin{equation} \label{df44}
c_1 |x|^{\gamma} \le Re\, \psi_1 (x) \le c_2 |x|^{\gamma},\;\; \text{when} \; |x|>M.
\end{equation}
\end{theorem}

\begin{example} \label{233} {\em The next examples of $\alpha$-stable type L\'evy processes  are also considered in    \cite{Pr14}. It is easy to check that in each example 
$\alpha_0  = \alpha \in (0,2).$
Thanks to Theorem \ref{pri1} also  \eqref{grad} holds  in each example. 

\hh  Consider the following  L\'evy measure $\tilde \nu$: 
\begin{gather} \label{t66}
\tilde \nu (B) = \int_0^r  \frac{dt}{t^{1+ \alpha}} \int_{S} 1_{B} (t \xi)  \mu (d \xi), \;\; B \in {\cal B}({\mathbb R}^d)
\end{gather}
(cf. Example 1.5 of \cite{SSW} with the index $\beta $ of  \cite{SSW} which is equal to  $ \infty$). Here $r>0$  is fixed;  $\mu$ is a non-degenerate finite non-negative measure on ${\cal B}({\mathbb R}^d)$ with  support on the unit sphere $S$  (non-degeneracy  of $\mu$ is equivalent to say that  its support is not contained in a proper linear subspace of ${\mathbb R}^d$),  $\alpha \in (0,2)$. 
The L\'evy measure   $\tilde \nu$  verifies Hypothesis \ref{nondeg1} since its symbol $\tilde \psi$ verifies \eqref{df44} with $\gamma = \alpha$. This  was already remarked in page 1146 of \cite{SSW}. We only note that, if $h \not =0$, we have 
$$
 Re\, \tilde  \psi (h) =  \int_0^r  \frac{dt}{t^{1+ \alpha}} \int_{S} \Big[ 1 - \cos \Big ( \langle \frac{h}{|h|}, t |h| \,\xi \rangle \Big) \Big] \mu (d \xi).
$$
By changing variable $s =  t |h|$ after some computations one arrives at \eqref{df44}.

Moreover 
Hypothesis 
\ref{zero} holds.
Note that 
$\int_{\{ |x| > 1  \}} \! |y|^{\theta} \, \tilde \nu (dy)\! < \infty$,
 {$\theta \in (0, \alpha)$}.
 Using also  $\tilde \nu$ we find that { the next examples of L\'evy processes verify Hypotheses \ref{zero} and \ref{nondeg1}.}

\hh (i) \textit { $L $ is a non-degenerate 
symmetric $\alpha$-stable process} (see, for instance, \cite{sato} and the references therein).
 In this case $ \nu (B) = \int_0^{\infty}  \frac{dt}{t^{1+ \alpha}} \int_{S} 1_{B} (t \xi)  \mu (d \xi)$, $\, B \in {\cal B}({\mathbb R}^d)$, $\alpha \, \in (0,2)$, where $\mu$ is as in \eqref{t66}.
A standard rotationally invariant $\alpha$-stable process $L$  belongs to this class since its  L\'evy measure has  density  $\frac{c}{|x|^{d+ \alpha}}$ (with respect to the Lebesgue measure in ${\mathbb R}^d$).

 \hh (ii) \textit { $L $ is a   $\alpha$-stable temperated process 
 of special form.}
Here   
$$ \nu (B) =  \int_0^{\infty}  \frac{e^{-t} dt}{t^{1+ \alpha}} \int_{S} 1_{B} (t \xi)  \mu (d \xi), \;\, \; B \in {\cal B}({\mathbb R}^d),
$$
where $\mu$ is as in \eqref{t66}, $\alpha \in (0,2)$. 

Note that in (i) and (ii) we have $\nu (B) \ge e^{-1} \,\tilde \nu (B)$, $B \in {\cal B}({\mathbb R}^d)$, where $\tilde \nu$ is given in \eqref{t66} with $r=1$.

\hh  (iii)  \textit { $L $ is a truncated $\alpha$-stable process.}
 In this case $ \nu (B) = c \int_{\{ |x| \le 1\}}  \frac{1_{B}(x)}{|x|^{d+ \alpha}} \, dx $ $ B \in {\cal B}({\mathbb R}^d), \; \alpha \in (0,2).$

\hh  (iv) \textit{ $L $ is a relativistic $\alpha$-stable process} (cf. \cite{Ry} and see the references therein).
Here $\psi (h) = \big( |h|^2 + m^{\frac{2}{\alpha}} \big)^{\frac{\alpha}{2}} - m$,  for some $m>0$, $\alpha \in (0,2)$, $h \in {\mathbb R}^d$, and so \eqref{grad} holds.  Moreover by Lemma 2 in \cite{Ry} we know that 
 $\nu$ has the  density
$
 { C_{\alpha,d}}{|x|^{-d- \alpha}} \, e^{- m ^{1/\alpha} \, |x|} \, $ $\cdot \, \phi ( m ^{1/\alpha} \, |x|), $ $ x \not = 0,
 $
with $0 \le \phi (s) \le c_{\alpha,d,m} (s^{\frac{d-1+\alpha}{2} } +1)$, $s \ge 0$. Hence $\alpha = \alpha_0$ and 
 also Hypothesis 
\ref{zero} holds 
for any 
 $\theta >0.$
}
\end{example}

\subsection{Results on
strong existence and uniqueness 
by using  solutions of related Kolmogorov equations }

We first present 
 results on strong existence and uniqueness for \eqref{66} 
when $s=0$
which are  special cases of Lemma 5.2 and Theorem 5.3 in \cite{Pr14}. Then we study $L^p$-dependence from the initial condition $x$ following Theorem 4.3 in \cite{Pr10}.
Finally in Theorem \ref{uno11} we will consider the general case when $s \in [0,T]$.

All these theorems do not require the gradient estimates \eqref{grad}. However  they assume  the Blumenthal-Getoor index $\alpha_0 \in (0,2)$,
 $b \in C_b ({\mathbb R}^d, {\mathbb R}^d) $  and  classical  solvability of the following  
 Kolmogorov type equation:
\begin {equation} \label{eq1}
 {\lambda}  u (x)-
 {\mathcal L} u(x) \,  - \, Du(x) \, b(x) \, = \, b(x), \;\; {x \in {\mathbb R}^d},
\end{equation}
 where $b : {\mathbb R}^d \to {\mathbb R}^d$ is given in  \eqref{66},
  ${\cal L}$ in \eqref{lll}
 and
 $\lambda>0$; {the equation is intended componentwise,} i.e., $u : {\mathbb R}^d \to {\mathbb R}^d$ and, setting ${\cal L}_b = {\cal L} + b(x) \cdot D$,
   \begin{gather} \label{dee1}
 \lambda u_k(x) -
 {\cal L}_b u_k (x) =
  b_k(x), \;\;  k =1, \ldots, d, 
 \end{gather}
with $u(x)$ $ = (u_k(x))_{k=1, \ldots, d}$ and  $b(x) = (b_k(x))_{k=1, \ldots, d}$. The approach to get strong uniqueness 
passing through  solutions to \eqref{eq1} is similar to the one used in Section 2 of \cite{FGP} (see also \cite{Ver}).

 Remark
  that  ${\mathcal L}
 g (x)$
 in \eqref{lll} is  well defined even  for 
 $g \in C_b^{1+ \gamma} ({\mathbb R}^d)$ if
 $\alpha_0 < 1+ \gamma $ and  $\gamma \in [0,1)$ (cf. formula (13) in \cite{Pr14}).  
 Indeed when $|y| \le 1$  we can use the bound
 $
  | g(y + x) - g(x)
   -  \, y \cdot D g (x) |$ $
 \le  [ Dg]_{\gamma} \, |y|^{1+  \gamma},
  $ $ x \in {\mathbb R}^d.
$

\smallskip
  In addition
  $ {\mathcal L} g
 \in \,  C_b({\mathbb R}^d)$ when  $g \in  C^{1+ \gamma}_b ({\mathbb R}^d)$ and
 $1+ \gamma > \alpha_0$.
The next result is 
stated in 
Theorem 5.3 of \cite{Pr14} in a more general form which also shows  the   differentiability
 of solutions with  respect to  $x$
and the homeomorphism  property.
\begin{theorem}
\label{uno1}
 Let $L$ be any L\'evy process on $(\Omega,{\cal F}, P)$ with generating triplet  $(\nu, 0,0)$  such that $\alpha_0 = \alpha_0(\nu) \in (0,2)$
(see \eqref{bgg})
 and let $b \in C_b ({\mathbb R}^d, {\mathbb R}^d)$ in  \eqref{66}.
  Suppose 
 that, for some $\lambda >0$,  there exists
 $u= u_{\lambda} \in C^{1+ \gamma}_b ({\mathbb R}^d, {\mathbb R}^d)$, $\gamma \in (0,1)$ and $2 \gamma > {\alpha_0}$,
 which solves  
 \eqref{eq1}.
  Moreover,
  assume  
   $ {\| Du_{\lambda} \|_{0}  < 1/3} $. 
    
Then
 on $(\Omega, {\cal F}, P)$, for any $x \in {\mathbb R}^d$, there exists a pathwise unique  strong solution $(X_t^x)_{t \ge 0}$ to \eqref{66} when $s=0$.
\end{theorem}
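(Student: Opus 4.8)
The plan is to run a Zvonkin-type transformation that converts \eqref{66} into an SDE with Lipschitz coefficients driven by the same L\'evy process $L$. Set $\psi(x) = x + u_\lambda(x)$. Since $\|Du_\lambda\|_0 < 1/3$, the matrix $I + Du_\lambda(x)$ is everywhere invertible and $\psi$ is a homeomorphism of $\R^d$ onto itself; moreover $\psi - \mathrm{Id}$ and $\psi^{-1} - \mathrm{Id}$ are bounded, $\psi$ and $\psi^{-1}$ are globally Lipschitz, and, because $D\psi^{-1}(z) = (I + Du_\lambda(\psi^{-1}(z)))^{-1}$ depends on $z$ through the $\gamma$-H\"older map $Du_\lambda$ composed with the locally Lipschitz matrix inversion, one also gets $\psi^{-1} \in C^{1+\gamma}_b(\R^d;\R^d)$. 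First I would record these elementary facts, which only use $\|Du_\lambda\|_0 < 1/3$ and $u_\lambda \in C^{1+\gamma}_b$.

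The core of the argument is a two-sided change-of-variables formula. Since $Q=0$ and $1+\gamma > 2\gamma > \alpha_0$, $\mathcal L w$ is well defined and bounded for $w \in C^{1+\gamma}_b$ (as noted after \eqref{dee1}), and the It\^o--L\'evy formula applies to $w(X_t)$ whenever $X$ solves \eqref{66}. Applying it to $\psi(X_t)$ and using that $u_\lambda$ solves \eqref{eq1}, i.e. $\mathcal L u_\lambda + Du_\lambda\, b = \lambda u_\lambda - b$, all the drift contributions collapse and $Z_t := \psi(X_t)$ is seen to solve the transformed equation $dZ_t = \lambda (u_\lambda \circ \psi^{-1})(Z_{t-})\,dt + dL_t + \int_{\R^d\setminus\{0\}} H(Z_{t-}, y)\,\widetilde N(dt, dy)$, $Z_0 = \psi(x)$, where $H(z,y) = u_\lambda(\psi^{-1}(z)+y) - u_\lambda(\psi^{-1}(z))$ and $\widetilde N$ is the compensated Poisson measure of $L$. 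Conversely, if $Z$ solves this transformed SDE, then applying the It\^o--L\'evy formula to $\psi^{-1}(Z_t)$, and using the pointwise identity $\psi^{-1}\big(z + y + H(z,y)\big) = \psi^{-1}(z) + y$ together again with \eqref{eq1}, shows that $X_t := \psi^{-1}(Z_t)$ is a strong solution of \eqref{66} starting from $x$. Thus $X \mapsto \psi(X)$ is a bijection between strong solutions of \eqref{66} and strong solutions of the transformed equation, preserving the driving noise and the adaptedness to $({\cal F}^L_t)$.

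It then remains to treat the transformed equation by classical means. Its drift $\lambda\, u_\lambda \circ \psi^{-1}$ is bounded and globally Lipschitz; its jump coefficient satisfies $|H(z,y)| \le \|Du_\lambda\|_0\, |y|$ for $|y|\le 1$ and $|H(z,y)| \le 2\|u_\lambda\|_0$ for $|y|>1$, so $\sup_z \int_{\R^d\setminus\{0\}} |H(z,y)|^2\,\nu(dy) < \infty$, and, writing $H(z_1,y) - H(z_2,y)$ as an integral of $Du_\lambda(\psi^{-1}(z_2) + \cdot + y) - Du_\lambda(\psi^{-1}(z_2) + \cdot)$, one obtains $|H(z_1,y) - H(z_2,y)| \le \tfrac32 [Du_\lambda]_\gamma\, |y|^\gamma\, |z_1 - z_2|$ with $\int_{|y|\le 1} |y|^{2\gamma}\,\nu(dy) < \infty$, the last finiteness being exactly where $2\gamma > \alpha_0$ enters (cf. \eqref{bgg}). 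Hence, by the standard theory of SDEs with Lipschitz coefficients driven by L\'evy processes (see \cite{Ku}, \cite{situ}), the transformed equation has on $(\Omega,{\cal F},P)$, for every initial point, a pathwise unique strong solution $Z$. Setting $X^x_t = \psi^{-1}(Z_t)$ yields the desired strong solution of \eqref{66}; and if $X^{(1)}, X^{(2)}$ are strong solutions of \eqref{66} with $X^{(1)}_0 = X^{(2)}_0 = x$, then $\psi(X^{(1)})$ and $\psi(X^{(2)})$ both solve the transformed SDE with initial datum $\psi(x)$, hence coincide $P$-a.s., so that $X^{(1)} = X^{(2)}$ $P$-a.s.

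The main obstacle is the two-sided It\^o--L\'evy computation of the second paragraph, especially its reverse direction: one must track carefully the compensators of the small jumps and of the large jumps of $L$ (note that $\int_{|y|>1} y\,\nu(dy)$ need not converge, so the large jumps cannot be recentered) and check that, after inserting \eqref{eq1}, every term not of the form ``$b(X_s)\,ds + dL_t$'' cancels. This is also the point where the precise regularity $u_\lambda \in C^{1+\gamma}_b$ with $1+\gamma > \alpha_0$ is indispensable, ensuring $\mathcal L u_\lambda \in C_b$ and that both $\psi(X)$ and $\psi^{-1}(Z)$ are legitimate applications of the change-of-variables formula. Alternatively, since Theorem \ref{uno1} is a special case of Theorem 5.3 in \cite{Pr14}, one may simply invoke that reference for the construction step, but the transformation above is what makes the statement transparent.
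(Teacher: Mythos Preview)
Your proposal is correct and follows exactly the Zvonkin-type transformation used in \cite{Pr14}, which is precisely what the paper invokes (the theorem is stated as a special case of Theorem 5.3 in \cite{Pr14}, with no independent proof given here; the forward direction of your It\^o computation is what the paper records separately as Lemma \ref{dee}). One cosmetic point: $\psi^{-1}$ is not in $C^{1+\gamma}_b$ since it is unbounded---what you need (and what your argument actually uses) is that $\psi^{-1}-\mathrm{Id}=-u_\lambda\circ\psi^{-1}\in C_b$ and $D\psi^{-1}\in C^{\gamma}_b$, which suffices for the It\^o--L\'evy formula.
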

Next we formulate a special case of  Lemma 5.2 in \cite{Pr14}. 
It  uses  the stochastic integral
  against the compensated Poisson random measure  $\tilde N$
 (see, for instance, \cite{Ku}).
\begin{lemma} \label{dee} 
Under the same hypotheses of Theorem \ref{uno1} let $T>0$ and suppose that 
   $(X_t^x)_{t \in [0,T]}$  is  a strong  solution of
 \eqref{66} on $[0,T]$ when $s=0$  (starting from $x \in {\mathbb R}^d$),
 then, using $u_{\lambda}$ of Theorem \ref{uno1}, we have, $P$-a.s.,
 for any $t \in [0,T]$, 
\begin{gather} 
\label{iii}
 u_{\lambda} (X_t^x ) -  u_{\lambda} (x )
\\ \nonumber
=  x + L_t - X_t^x   + \lambda  \int_0^t  u_{\lambda}(X_s^x) ds
 +  \int_0^{t} \int_{{\mathbb R}^d \setminus \{ 0 \} } \! \! [  u_{\lambda}(X_{s-}^x + y) -  u_{\lambda}(X_{s-}^x)]
   \tilde N(ds, dy).
\end{gather} 
\end{lemma}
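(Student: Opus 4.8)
The plan is to apply the It\^o formula for jump processes to $t\mapsto u_\lambda(X_t^x)$ and then to substitute the Kolmogorov equation \eqref{eq1}. Since the generating triplet of $L$ is $(\nu,0,0)$, the strong solution $X^x=(X_t^x)$ is a semimartingale with no continuous martingale part: its finite-variation drift is $\int_0^t b(X_s^x)\,ds$, while its jumps coincide with those of $L$, so $\triangle X_s^x=\triangle L_s$ and the jumps of $X^x$ have compensator $ds\,\nu(dy)$. Writing $L$ through the L\'evy-It\^o decomposition \eqref{itl}--\eqref{ito1} (with $B\equiv0$ here), the change-of-variables formula I want to establish is, $P$-a.s. for all $t\in[0,T]$,
\begin{equation} \label{plan1}
u_\lambda(X_t^x)=u_\lambda(x)+\int_0^t\big[{\cal L}u_\lambda(X_s^x)+Du_\lambda(X_s^x)\,b(X_s^x)\big]ds+\int_0^t\!\!\int_{{\mathbb R}^d\setminus\{0\}}\big[u_\lambda(X_{s-}^x+y)-u_\lambda(X_{s-}^x)\big]\tilde N(ds,dy),
\end{equation}
with ${\cal L}$ as in \eqref{lll}. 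Here the drift integral makes sense because ${\cal L}u_\lambda\in C_b({\mathbb R}^d)$, which uses $\alpha_0<1+\gamma$ together with the estimate $|u_\lambda(x+y)-u_\lambda(x)-y\cdot Du_\lambda(x)|\le[Du_\lambda]_\gamma|y|^{1+\gamma}$ noted above; and the stochastic integral is a genuine $L^2$-martingale since its integrand is bounded by $2\|u_\lambda\|_0$ on $\{|y|>1\}$, where $\nu(\{|y|>1\})<\infty$, and by $\|Du_\lambda\|_0\,|y|$ on $\{|y|\le1\}$, where $\int_{\{|y|\le1\}}|y|^2\nu(dy)<\infty$.

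Granting \eqref{plan1}, the conclusion follows by a short computation. Reading \eqref{eq1} componentwise as in \eqref{dee1} gives ${\cal L}u_\lambda(z)+Du_\lambda(z)\,b(z)=\lambda u_\lambda(z)-b(z)$ for all $z\in{\mathbb R}^d$; evaluating at $z=X_s^x$ and integrating in $s$ yields
\begin{equation} \label{plan2}
\int_0^t\big[{\cal L}u_\lambda(X_s^x)+Du_\lambda(X_s^x)\,b(X_s^x)\big]ds=\lambda\int_0^t u_\lambda(X_s^x)\,ds-\int_0^t b(X_s^x)\,ds.
\end{equation}
Since $(X_t^x)$ solves \eqref{66} with $s=0$, one has $\int_0^t b(X_s^x)\,ds=X_t^x-x-L_t$, so the last term in \eqref{plan2} equals $x+L_t-X_t^x$; substituting into \eqref{plan1} gives precisely \eqref{iii}.

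The main obstacle is the rigorous justification of \eqref{plan1}, since $u_\lambda$ is only $C^{1+\gamma}_b$ and the classical It\^o formula for semimartingales with jumps is usually stated for $C^2$ functions. I would handle this by mollification, the same route as for the more general Lemma 5.2 of \cite{Pr14}, of which the present statement is a special case: let $u_n=u_\lambda*\rho_n$ with $(\rho_n)$ an approximate identity, apply the classical It\^o formula to $u_n\in C^\infty_b$, and let $n\to\infty$. One has $u_n\to u_\lambda$ and $Du_n\to Du_\lambda$ uniformly on ${\mathbb R}^d$ (the latter because $Du_\lambda$ is uniformly continuous), with $[Du_n]_\gamma\le[Du_\lambda]_\gamma$; hence ${\cal L}u_n\to{\cal L}u_\lambda$ pointwise by dominated convergence (dominating the $\{|y|\le1\}$ part by $[Du_\lambda]_\gamma|y|^{1+\gamma}$ and the $\{|y|>1\}$ part by $2\|u_\lambda\|_0$), the drift integrals converge by a further dominated convergence in $(s,\omega)$, and the stochastic integrals converge in $L^2$ by It\^o's isometry, using $\|u_n-u_\lambda\|_0\to0$, $\|D(u_n-u_\lambda)\|_0\to0$, $\nu(\{|y|>1\})<\infty$ and $\int_{\{|y|\le1\}}|y|^2\nu(dy)<\infty$. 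Passing to an a.s.-convergent subsequence and using the c\`adl\`ag paths of both sides then yields \eqref{iii} simultaneously for all $t\in[0,T]$, $P$-a.s.; the remaining steps \eqref{plan2} and the substitution of the SDE are purely algebraic.
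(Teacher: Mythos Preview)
Your proposal is correct and follows essentially the same approach as the paper: the paper's proof simply refers to Lemma 5.2 of \cite{Pr14}, noting that it is based on It\^o's formula for $u_\lambda(X_t^x)$ and that the hypothesis $2\gamma>\alpha_0$ ensures the weaker condition $1+\gamma>\alpha_0$ needed there. Your sketch supplies exactly the details behind that reference---the It\^o expansion \eqref{plan1}, the substitution of the Kolmogorov equation, and the mollification argument to cope with $u_\lambda\in C_b^{1+\gamma}$ rather than $C^2$---so there is no methodological difference.
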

\begin{proof} 
The assertion is stated in   Lemma 5.2 of \cite{Pr14} 
  for  weak solutions $(X_t^x)_{t \ge 0}$ with the  condition $1+ \gamma > {\alpha_0}$, $\gamma \in (0,1]$.
 Clearly 
  such lemma  works also for strong solutions $(X_t^x)_{t \in [0,T]}$ which solves \eqref{66} on $[0,T]$ (the proof is based on It\^o's formula for $ u_{\lambda}(X_t^x))$;
  further the condition $2 \gamma > {\alpha_0} $ of Theorem \ref{uno1} implies $1+ \gamma > {\alpha_0}$. 
\end{proof}
  To prove Davie's uniqueness for \eqref{66} 
we   need the following $L^p$-continuity of the solutions  w.r.t. initial conditions.
\begin{theorem} 
\label{sti1} Under the same hypotheses of Theorem \ref{uno1}    
 let $T>0$, $s=0$, and  
  consider  two strong solutions $(X_t^x)_{t \in [0,T]}$ and 
$(X_t^y)_{t \in [0,T]}$
  of \eqref{66} on $[0,T]$ which are defined on  $(\Omega, {\cal F}, P)$,  starting from $x$ and $y \in {\mathbb R}^d$ respectively. 
    For any  $t \in [0,T]$, $p \ge 2$,   we have 
 \begin{equation} \label{ciao223}
\begin{array} {l}
E\big[ \sup_{0 \le s \le t} |X_s^x - X^y_s|^p \big] \le C(t) \,
 |x-y|^p,
\end{array}
  \end{equation}
 with  $C(t) = C (t,  \nu, p,  \lambda, d,  \gamma, 
 \| u_{\lambda}\|_{C^{1+ \gamma}_b}) >0$  which is independent of $x$ and $y$; here $u_{\lambda}$ is as in Theorem \ref{uno1} (further $C (t,  \nu, p,  \lambda, d,  \gamma, \cdot )$ is increasing).
\end{theorem}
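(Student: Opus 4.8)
The plan is to run the It\^o--Tanaka/Zvonkin transformation induced by the solution $u_{\lambda}$ of \eqref{eq1}, exactly as in the strong--uniqueness argument, in order to turn \eqref{66} into an SDE whose drift is Lipschitz and whose jump coefficient has a square-$\nu$-integrable Lipschitz kernel, and then to invoke the classical $L^p$-dependence estimate for such equations. This is essentially the argument behind Theorem 4.3 in \cite{Pr10}.

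Set $\psi_{\lambda}(z) = z + u_{\lambda}(z)$. Since $\|Du_{\lambda}\|_{0} < 1/3$, $\psi_{\lambda} : \R^d \to \R^d$ is a bi-Lipschitz homeomorphism with $\tfrac{2}{3}|z-z'| \le |\psi_{\lambda}(z)-\psi_{\lambda}(z')| \le \tfrac{4}{3}|z-z'|$, so $\psi_{\lambda}^{-1}$ is Lipschitz with constant $\le 3/2$. Applying Lemma \ref{dee} to the strong solution $(X^x_t)$ and rearranging \eqref{iii}, the process $W^x_t := \psi_{\lambda}(X^x_t)$ satisfies
\[
W^x_t = \psi_{\lambda}(x) + L_t + \int_0^t \tilde b(W^x_s)\,ds + \int_0^t\!\!\int_{\R^d\setminus\{0\}} \tilde g(W^x_{s-},y)\,\tilde N(ds,dy),
\]
with $\tilde b := \lambda\,u_{\lambda}\circ\psi_{\lambda}^{-1}$ and $\tilde g(w,y) := u_{\lambda}(\psi_{\lambda}^{-1}(w)+y) - u_{\lambda}(\psi_{\lambda}^{-1}(w))$; the identical representation holds for $W^y_t := \psi_{\lambda}(X^y_t)$, driven by the same $L$. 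Subtracting, the $L_t$-terms cancel and $\xi_t := W^x_t - W^y_t$ solves
\[
\xi_t = \psi_{\lambda}(x)-\psi_{\lambda}(y) + \int_0^t\big[\tilde b(W^x_s)-\tilde b(W^y_s)\big]\,ds + \int_0^t\!\!\int\big[\tilde g(W^x_{s-},y)-\tilde g(W^y_{s-},y)\big]\,\tilde N(ds,dy).
\]

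The key regularity facts are that $\tilde b$ is globally Lipschitz with constant $\le \tfrac{3}{2}\lambda\|Du_{\lambda}\|_{0}$, and that, writing $D\psi_{\lambda} = I + Du_{\lambda}$ and using the mean value theorem twice,
\[
\tilde g(w,y)-\tilde g(w',y) = \int_0^1\big[Du_{\lambda}(\psi_{\lambda}^{-1}(w')+\tau\Delta+y) - Du_{\lambda}(\psi_{\lambda}^{-1}(w')+\tau\Delta)\big]\,d\tau\cdot\Delta,\qquad \Delta := \psi_{\lambda}^{-1}(w)-\psi_{\lambda}^{-1}(w'),
\]
whence $|\tilde g(w,y)-\tilde g(w',y)| \le \kappa(y)\,|w-w'|$ with $\kappa(y)$ comparable to $\min\{[Du_{\lambda}]_{\gamma}|y|^{\gamma},\,\|Du_{\lambda}\|_{0}\}$. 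Because $2\gamma > \alpha_0$ we have $\int_{\{|y|\le 1\}}|y|^{2\gamma}\nu(dy) < \infty$, and since $\nu(\{|y|>1\}) < \infty$ this yields $\int_{\R^d}\kappa(y)^q\,\nu(dy) < \infty$ for every $q\ge 2$ --- this is the sole structural role of the hypothesis $2\gamma>\alpha_0$ here. After the standard stopping-time localisation making all expectations finite, Kunita's $L^p$-inequality (the Burkholder--Davis--Gundy inequality for compensated Poisson integrals) applied to the martingale part and H\"older's inequality applied to the drift give, for $p\ge 2$ and $t\in[0,T]$,
\[
E\big[\sup_{s\le t}|\xi_s|^p\big] \le C_p\,|\psi_{\lambda}(x)-\psi_{\lambda}(y)|^p + C\int_0^t E\big[\sup_{r\le s}|\xi_r|^p\big]\,ds,
\]
where $C$ depends only on $p,d,\lambda,\gamma,\nu$ and $\|u_{\lambda}\|_{C^{1+\gamma}_b}$ (through $\tfrac{3}{2}\lambda\|Du_{\lambda}\|_{0}$, $(\int\kappa^2 d\nu)^{p/2}$, $\int\kappa^p d\nu$, $T^{p-1}$ and the Burkholder constant); Gronwall's lemma gives $E[\sup_{s\le t}|\xi_s|^p] \le C e^{Ct}|\psi_{\lambda}(x)-\psi_{\lambda}(y)|^p$. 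The bi-Lipschitz bounds $|X^x_t-X^y_t| \le \tfrac{3}{2}|\xi_t|$ and $|\psi_{\lambda}(x)-\psi_{\lambda}(y)| \le \tfrac{4}{3}|x-y|$ then yield \eqref{ciao223} with a constant $C(t)$ of the asserted form, increasing in $t$ because it arises from $Ce^{Ct}$.

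The main obstacle is the estimate of the jump-coefficient increment: it is essential to pass to the transformed variable $W=\psi_{\lambda}(X)$, so that the difference enters $\tilde g(w,y)-\tilde g(w',y)$ only through an increment of $Du_{\lambda}$ over a displacement of size $|y|$, multiplied by the \emph{full} difference $|w-w'|$. If one worked directly with $|X^x-X^y|$, the mere H\"older --- not Lipschitz --- regularity of $b$ and of $Du_{\lambda}$ would produce a factor $|X^x-X^y|^{\gamma}$ with $\gamma<1$, and the Gronwall step would fail to close with the homogeneity degree $p$ demanded by \eqref{ciao223}. A minor point is that $(X^x_t)$ itself need not have finite $p$-th moments --- by Hypothesis \ref{zero} only a $\theta$-th moment of $L$ is available --- but this is harmless, since every estimate involves only the difference $\xi$, whose jumps are controlled pathwise by $\kappa(y)|\xi_{s-}|$.
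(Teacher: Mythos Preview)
Your proof is correct and is essentially the same argument as the paper's, which also follows Theorem~4.3 of \cite{Pr10}: both use the Zvonkin-type identity of Lemma~\ref{dee}, subtract, estimate the drift by H\"older and the compensated jump integral by Kunita's $L^p$-inequality (splitting small and large jumps and using $2\gamma>\alpha_0$ for the small-jump part), and close with Gronwall. The only cosmetic difference is that you name the transformation $\psi_\lambda=I+u_\lambda$ and work with $W=\psi_\lambda(X)$, whereas the paper takes the difference directly from \eqref{iii} and absorbs the bi-Lipschitz constants into the factor $3/2$; your explicit mention of the stopping-time localisation is a welcome addition that the paper leaves implicit.
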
  
\begin{proof} The proof follows  the one of  (i) in  Theorem 4.3 of \cite{Pr10}. We  only give a sketch of the proof here.
 We set $X= X^x$, $Y = X^y$ and $u = u_{\lambda}$. 
 We have from Lemma \ref{dee}, $P$-a.s., using that 
$\| Du\|_0 \le 1/3 $,  $| X_t \,  - \, Y_t| \le \frac{3}{2} \big ( \Gamma_{1}(t) \, +
 \Gamma_{2}(t) \, + \Gamma_{3}(t)
 \, + \Gamma_{4} \big),$ {where}
 \begin{gather*}
  \Gamma_{1}(t) =  \Big|
 \int_0^t \! \!\int_{ \{ |z| > 1\} } [ u(X_{s-} + z) - u(X_{s-})
  - u(Y_{s-} + z) + u(Y_{s-})]
   \tilde N(ds, dz) \Big|,
\\
\Gamma_{2}(t) =  \lambda  \int_0^t   |u(X_s)- u(Y_s)| ds,
\\
\Gamma_{3}(t) =  \Big| \int_0^t \int_{\{ |z| \le  1\} } [ u(X_{s-} +
z) - u(X_{s-})
 - u(Y_{s-} + z) + u(Y_{s-}) ]
   \tilde N(ds, dz) \Big|,
\end{gather*}
$ \Gamma_{4}  \, = \, |u(x) - u(y)| + |x-y| \le \, \frac{4}{3}|\, x-y| $. Remark 
that, $P$-a.s.,
$$
\begin{array}{l}
\sup_{0 \le r \le t } |X_r \, - \, Y_r|^p \le {C_1} \, |x-y|^p +
 C_1\sum_{j=1}^3 \sup_{0 \le r \le t } \, \Gamma_j(r)^p.
\end{array}
$$
By the H\"older inequality,
 $
\sup_{0 \le r \le t } \Gamma_{2}(r)^p \le C_2  \, t^{p-1} \int_0^t
\sup_{0 \le s \le r } |X_s \, - \, Y_s|^p \, dr,
$
 where $C_2 = C_2 (p, \lambda, \| u_{\lambda}\|_{C^{1+ \gamma}_b})$. To estimate $\Gamma_{1}$ and $\Gamma_{3}$ we use 
 $L^p$-estimates for stochastic integrals against $\tilde N$
 (cf. \cite[Theorem  2.11]{Ku}
 or the proof of  Proposition 6.6.2 in \cite{A}). 

We find, since
 $ | u(X_{s-} + z)
  - u(Y_{s-} + z) + u(Y_{s-} ) - u(X_{s-}) |
 $ $\le \frac{2}{3} |X_{s-}
  -  Y_{s-}|,
  $  setting  $A = \{ |z| >1\}$,
\begin{gather*}
E[\, \sup_{0 \le r \le t } \Gamma_{1}(r)^p] 
\\ \le C_3
 E \Big [ \! \Big ( \int_{0}^{t} ds \int_{ A} | \!  u(X_{s-} + z)
  - u(Y_{s-} + z) + u(Y_{s-})- u(X_{s-}) |^2 \nu(dy) \Big )^{p/2}
  \Big ]
\\ + \, C_3
 E \, \int_{0}^{t} ds \int_{ A } | u(X_{s-} + z)
  - u(Y_{s-} + z) + u(Y_{s-})- u(X_{s-})|^p \nu(dy)
\\  \le \, C_4
\, (1+ t^{p/2
-1}) \, 
 \int_0^t E[
\sup_{0 \le r \le s } |X_r - Y_r|^p ] ds,
\end{gather*}
where $C_3 =
 \int_{\{ |z|>1 \} } \, \nu(dz) \,  +  \big(\int_{\{ |z|>1\} }
\nu(dz) \, \big)^{p/2} $.
 To treat  $\Gamma_{3}$ we need 
the hypothesis $2 \gamma >
{\alpha_0}$. By $L^p$-estimates of stochastic integrals and using Lemma 4.1 in \cite{Pr10}   we get
 \begin{gather*}
E[\sup_{0 \le r \le t } \Gamma_{3}(r)^p]
 \le C_5 \| u\|_{C^{1+ \gamma}_b }^p
 \, E \Big [
 \Big ( \int_0^t dr \int_{\{  |z| \le  1 \} } |X_{r} - Y_r|^2
 |z|^{2\gamma}
 \nu(dz) \Big )^{p/2} \Big ]
\\
 + \, C_5 \| u\|_{C^{1+ \gamma}_b }^p \; 
 \, E  \int_0^t |X_{r} - Y_r|^p dr \int_{\{  |z| \le  1 \} } 
 |z|^{\gamma p}
 \nu(dz).
\end{gather*}
Note that $\int_{\{ |z| \le  1 \} }  
|z|^{p \gamma}
   \nu(dz) < \infty  $,
 since $p \ge 2$ and $2 \gamma > {\alpha_0}$.
  Collecting the previous estimates, we arrive at
\begin{gather*}
E[ \sup_{0 \le r \le t} |X_r - Y_r|^p] \le C_6 \,
 |x-y|^p \,+\, C_6  \, (1+ t^{p -1})
 \, \int_0^t E[\sup_{0 \le
r \le s }  |X_r - Y_r|^p] \, ds,
\end{gather*} 
 $C_6 = C_6 (\nu $, $p$, $\lambda,$ $d, \gamma)>0$.
By the
  Gronwall lemma we obtain the assertion 
  with $C(t) = C_6 \exp \big( C_6  \, (1+ t^{p -1})\big)$.
\end{proof}
As a  consequence of the previous results we get 
\begin{theorem}
\label{uno11} Under the same hypotheses of Theorem \ref{uno1}    
 let $T>0$ and $s\in [0,T]$. Then, for any $x \in {\mathbb R}^d$, 
there exists a pathwise unique strong solution   ${\tilde X}^{s,x} 
=({\tilde X}^{s,x}_t)_{t \in [0,T]}$ to \eqref{66} on   
 $(\Omega, {\mathcal F}, P)$ (recall that ${\tilde X}_t^{s,x} =x$ for $t \le s$).
 Moreover  if ${ U}^{s,x}$ and ${ U}^{s,y}$ are two strong solutions on $[0,T]$ defined on $(\Omega, {\cal F}, P)$  and starting at $x$ and $y$, then  we have, for  $p \ge 2$,
 \begin{equation} \label{ciao22}
\sup_{s \in [0,T]}  \, E[\sup_{ s \le t \le T} |{U}_t^{s,x} - {U}_t^{s,y}|^p] 
\le C(T) \,
 |x-y|^p, \;\;\;  x,\, y \in {\mathbb R}^d,
  \end{equation}
where $C(T) = C (T,  \nu, p,  \lambda, d,  \gamma, 
 \| u_{\lambda}\|_{C^{1+ \gamma}_b}) >0$  as in \eqref{ciao223}.
 \end{theorem}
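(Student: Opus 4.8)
The plan is to reduce the general initial time $s\in[0,T]$ to the case $s=0$, which is already settled by Theorems \ref{uno1} and \ref{sti1}, by exploiting that both the drift in \eqref{66} and the driving noise are time-homogeneous. First I would fix $s\in[0,T]$ and introduce the shifted process $\hat L_r:=L_{s+r}-L_s$, $r\ge 0$, defined on the same $(\Omega,{\cal F},P)$. Since $L$ has stationary and independent increments and càdlàg paths with $L_0=0$, the same is true for $\hat L$; and from $E[e^{i\langle h,\hat L_r\rangle}]=E[e^{i\langle h,L_r\rangle}]=e^{-r\psi(h)}$ one sees that $\hat L$ is again a L\'evy process with the \emph{same} generating triplet $(\nu,0,0)$. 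In particular its Blumenthal--Getoor index equals $\alpha_0\in(0,2)$, the generator ${\cal L}$ is unchanged, so the Kolmogorov equation \eqref{eq1} and its solution $u_\lambda\in C^{1+\gamma}_b({\mathbb R}^d,{\mathbb R}^d)$ with $\|Du_\lambda\|_0<1/3$ are literally the same. I would also record the (completed) filtration identification ${\cal F}^{\hat L}_{0,r}={\cal F}^L_{s,s+r}$, which is immediate from the definitions in Section 2.

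For existence I would apply Theorem \ref{uno1} to the L\'evy process $\hat L$: for each $x$ there is a pathwise unique strong solution $(\hat X^x_r)_{r\ge 0}$ of $\hat X^x_r=x+\int_0^r b(\hat X^x_u)\,du+\hat L_r$. Restricting to $r\in[0,T-s]$ and setting ${\tilde X}^{s,x}_t:=\hat X^x_{t-s}$ for $t\in[s,T]$ and ${\tilde X}^{s,x}_t:=x$ for $t\le s$ gives, after a change of variable, a process solving \eqref{66} on $[s,T]$; the filtration identification above makes ${\tilde X}^{s,x}_t$ indeed ${\cal F}^L_{s,t}$-measurable, so it is a strong solution in the sense of Section 2. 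Conversely, any strong solution $U^{s,x}$ of \eqref{66} starting from $x$ at time $s$ yields, via the time shift $\hat U^x_r:=U^{s,x}_{s+r}$, a strong solution of the $\hat L$-driven equation on $[0,T-s]$; hence pathwise uniqueness for \eqref{66} at initial time $s$ follows from pathwise uniqueness for $\hat L$ (equivalently, it also follows from the estimate below taken with $y=x$).

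For the $L^p$-bound \eqref{ciao22}, given two strong solutions $U^{s,x},U^{s,y}$ on $(\Omega,{\cal F},P)$ I would apply Theorem \ref{sti1} to their time shifts $\hat U^x,\hat U^y$, which are strong solutions of the $\hat L$-driven equation on $[0,T-s]$. This gives $E[\sup_{0\le r\le T-s}|\hat U^x_r-\hat U^y_r|^p]\le C(T-s)\,|x-y|^p$, where $C(t)=C(t,\nu,p,\lambda,d,\gamma,\|u_\lambda\|_{C^{1+\gamma}_b})$; the crucial observation is that this constant depends on the driving noise only through its triplet $(\nu,0,0)$, so it is exactly the constant of \eqref{ciao223} for $L$. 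Since $C(t,\nu,p,\lambda,d,\gamma,\cdot)$ is nondecreasing in $t$ by the statement of Theorem \ref{sti1}, we have $C(T-s)\le C(T)$; undoing the shift gives $E[\sup_{s\le t\le T}|U^{s,x}_t-U^{s,y}_t|^p]\le C(T)\,|x-y|^p$ with a bound independent of $s$, and taking $\sup_{s\in[0,T]}$ yields \eqref{ciao22}.

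I do not expect a genuine analytic obstacle here: the argument is entirely a homogeneity/bookkeeping reduction. The points requiring care are verifying that $\hat L$ is a L\'evy process with an unchanged triplet, that ${\cal F}^{\hat L}_{0,r}={\cal F}^L_{s,s+r}$ converts ``strong solution for $\hat L$'' into ``strong solution for $L$ at time $s$'', and that the constant produced by Theorem \ref{sti1} is genuinely independent of $s$ and monotone in $t$, so that $C(T-s)$ may be replaced by $C(T)$ to obtain the $s$-uniform statement.
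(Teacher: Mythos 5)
Your proposal is correct and follows essentially the same route as the paper: the paper's proof also introduces the shifted L\'evy process $L^{(s)}_t=L_{s+t}-L_s$ (with the same generating triplet and with ${\cal F}_t^{L^{(s)}}={\cal F}_{s,t+s}^L$), obtains existence from Theorem \ref{uno1} applied to this process, gets uniqueness by time-shifting an arbitrary strong solution, and derives \eqref{ciao22} from \eqref{ciao223} using the monotonicity of $C(\cdot)$ so that the bound is uniform in $s$.
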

\begin{proof} 
 \underline{\it Existence.}  Let us fix $s \in [0,T]$ and consider  the
 new process  $L^{(s)} = (L^{(s)}_t)$ on $(\Omega, {\cal F}, P)$,  $L^{(s)}_t =L_{s+ t} - L_s$, $t \ge 0$. This  is a  L\'evy process 
  with the same generating triplet  of $L$
and is independent of ${\cal F}_s^L$ (see Proposition 10.7 in \cite{sato}).
According to Theorem \ref{uno1} there exists a unique strong solution 
to 
\begin{equation} \label{dedo}
X_{t} = x + \int_0^{t}   b(X_r) dr +  L_{t}^{(s)},\;\;\; t \ge 0,
\end{equation}
which we denote by  $(X_{t, L^{(s)}}^{x    })$
to stress its dependence on  $L^{(s)}$.
  Note that, for any $t \ge 0$,  $X_{t, L^{(s)}}^{x    }$ is measurable with respect to ${\cal F}_t^{L^{(s)}} = {\cal F}_{s,t +s}^L$.
 Let us define a new  process with c\`adl\`ag paths 
 $({\tilde X}^{s,x}_t)_{t \in [0,T]}$,
\begin{equation}
\label{strong1}
{\tilde X}^{s,x}_t =  X_{t-s, L^{(s)}}^{x    },\;\; \text{for} \;   s \le t \le T; \;\;\; {\tilde X}^{s,x}_t =x,\;\; 0\le t \le s.
\end{equation} 
Writing $V_t = {\tilde X}^{s,x}_t$, $t \in [0,T]$, to simplify notation, we note that
 $V_t$ is ${\cal F}_{s,t}^L$-measurable, $t \ge s$. Moreover it solves  equation \eqref{66}; indeed, for  $t \in [s,T]$, 
$$
V_t = X_{t-s, L^{(s)}}^{x    } = x + \int_0^{t-s}  b( X_{r, L^{(s)}}^{x    }) dr +  L_{t} - L_s
= x + \int_s^{t}  b(V_{r}) dr +  L_{t} - L_s.
$$
\underline{\it Uniqueness.} Let $(U^{s,x}_t)$ 
be another strong solution.
 We have, $P$-a.s., for $s \le t \le T$,
\begin{gather*}
U_{t-s +s}^{s,x} = 
   x + \int_s^{t}  b(U_{r}^{s,x}) dr +  L_{t} - L_s
\\=
x + \int_0^{t-s}  b(U_{r+s}^{s,x}) dr +  L_{t} - L_s
= x + \int_0^{t-s}   b(U_{r+s}^{s,x}) dr +   L_{t-s}^{(s)}.
\end{gather*}
Hence $(U_{r +s}^{s,x})_{r \in [0, T-s]}$ solves \eqref{dedo} 
on $[0, T-s]$. By \eqref{ciao223} we get 
$$P (U_{r +s}^{s,x} = X^x_{r, L^{(s)}}, \, r \in [0, T-s])= P (U_{r +s}^{s,x} = \tilde X^{s,x}_{r +s}, \, r \in [0, T-s])=1.$$ 
This shows the assertion.

\vskip 1mm \hh \underline{\it $L^p$-estimates.} We have for any fixed $s \in [0,T]$,
   $p \ge 2$, 
$E[ \sup_{ s \le t \le T} |{U}_t^{s,x} - {U}_t^{s,y}|^p] 
= 
E[\sup_{ s \le t \le T}  |X_{t-s , \, L^{(s)}}^{x } - X_{t-s , \, L^{(s)} }^y|^p]$ by uniqueness. 
Using   \eqref{ciao223} we get
\begin{gather*}
\sup_{s \in [0,T]} E[ \sup_{ s \le t \le T} |{U}_t^{s,x} - {U}_t^{s,y}|^p] 
= 
\sup_{s \in [0,T]} E[\sup_{ s \le t \le T}  |X_{t-s , \, L^{(s)}}^{x } - X_{t-s , \, L^{(s)} }^y|^p] 
\\
\le \sup_{s \in [0,T]}E[\sup_{t \in [0,T]}  |X_{t , \, L^{(s)}}^{x } - X_{t, \, L^{(s)} }^y|^p]  
\le C(T) \,
 |x-y|^p. 
 \end{gather*}
\end{proof}

\subsection { A  Davie's type uniqueness result  when 
 $\alpha_0 \in [1, 2)$ 
 }
 
Here we prove a   Davie's type uniqueness result for \eqref{66} (cf. Theorem \ref{d32}).
 We consider  the Blumenthal-Getoor index ${\alpha_0} \in [1,2)$ (see \eqref{bgg}) and assume as in \cite{Pr10} and \cite{Pr14} that  $b \in C_b^{0, \beta}({\mathbb R}^d, {\mathbb R}^d)$  with 
 $\beta \in  \big( 1 - \frac{{\alpha_0}}{2} , 1 \big]$.

 To check Hypothesis \ref{primo} we will use Theorem
\ref{uno11}
 and the following purely analytic result
 (see Theorem 4.3 in \cite{Pr14}; its the proof follows the one in Theorem 3.4 of \cite{Pr10}). Note that the next hypothesis  $\alpha_0 + \beta <2$ could be dropped. Moreover,  
to simplify we have only considered the case $\lambda \ge 1$ instead of $\lambda >0$.  
\begin{theorem} 
\label{reg} Assume Hypothesis \ref{nondeg1}
  with ${\alpha_0} = \alpha_0(\nu) \ge 1$.  Let $0< \beta  < 1  $ with $ {\alpha_0} + \beta \in (1,2)$ and consider $\cal L$ in \eqref{lll}.
 Then,  for any
 $\lambda \ge 1$, $f \in C^{\beta}_b ({\mathbb R}^d)$,
   there exists a unique  solution
  $ w_{\lambda} \in C^{{\alpha_0} + \beta}_b
  ({\mathbb R}^d)$ to 
  \begin{gather} \label{mia}
 \lambda w(x) -  {\mathcal L} w(x)  - b(x) \cdot Dw(x) = f(x), \;\; x \in  {\mathbb R}^d
 \end{gather}
  Moreover,
  there exists  $C_0 $ $= C_0(\alpha_0 (\nu), $ $ d, \beta, \| b\|_{C^{\beta}_b}, \nu) >0$ such that
 \begin{equation} \label{sch4}
 \begin{array}{l}
 \lambda \| w_{\lambda}\|_0 + [
Dw_{\lambda}]_{C^{{\alpha_0} + \beta - 1}_b}
  \le C_0 \| f\|_{C^{\beta}_b}, \;\;  \lambda \ge 1.
\end{array} 
\end{equation}
 Finally,  we have 
 $ \| Dw_{\lambda} \|_{0} < 1/3$, for any  $\lambda  \ge \lambda_0( d, \| b\|_{C^{\beta}_b}, {\alpha_0} (\nu), \beta, \nu)\ge 1$.
\end{theorem}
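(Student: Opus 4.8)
The plan is to prove Theorem \ref{reg} in three stages: first the Schauder-type a priori estimate \eqref{sch4}, then existence and uniqueness of $w_\lambda \in C_b^{\alpha_0+\beta}({\mathbb R}^d)$ via a continuity (homotopy) argument, and finally the smallness condition $\|Dw_\lambda\|_0 < 1/3$ for large $\lambda$.

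\textbf{Schauder estimate.} The key analytic input is the gradient estimate \eqref{grad} from Hypothesis \ref{nondeg1}, together with the fact that $(P_t)$ maps $C_b$ into $C_b^1$. Writing the resolvent $R_\lambda g = \int_0^\infty e^{-\lambda t} P_t g\, dt$ for the process without drift, one shows by standard parabolic-type interpolation that $R_\lambda$ gains $\alpha_0$ derivatives in the scale of Hölder spaces: there is $C$ with $\|R_\lambda g\|_{C_b^{\alpha_0+\beta}} \le C \|g\|_{C_b^\beta}$ and $\lambda \|R_\lambda g\|_0 \le \|g\|_0$, uniformly in $\lambda \ge 1$. This is exactly the type of estimate proved in Theorem 3.4 of \cite{Pr10} and restated in \cite{Pr14}; I would cite those proofs rather than redo the kernel computations. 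Then one rewrites \eqref{mia} as a fixed point: $w = R_\lambda\big(f + b \cdot Dw\big)$. Since $\alpha_0 \ge 1$ and $\beta < 1$, the term $b \cdot Dw$ lies in $C_b^\beta$ whenever $Dw \in C_b^{\alpha_0+\beta-1} \supseteq C_b^\beta$ (using $\alpha_0 + \beta - 1 \ge \beta$, i.e. $\alpha_0 \ge 1$), with $\|b \cdot Dw\|_{C_b^\beta} \le \|b\|_{C_b^\beta}\big(\|Dw\|_0 + [Dw]_{C_b^{\alpha_0+\beta-1}}\big)$ up to a constant. Feeding this into the resolvent estimate and absorbing, for $\lambda$ large, the lower-order contribution of $b \cdot Dw$, one obtains \eqref{sch4}. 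For the full range $\lambda \ge 1$ one then uses that any $C_b^{\alpha_0+\beta}$ solution automatically satisfies the estimate once it is known for one large $\lambda$, by treating $(\lambda - \lambda')w$ as a perturbation on the right-hand side.

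\textbf{Existence and uniqueness.} With the a priori bound in hand, I would run the method of continuity between $\lambda w - {\cal L} w = f$ (which is solvable by $w = R_\lambda f \in C_b^{\alpha_0+\beta}$, directly from the resolvent mapping property) and the full operator $\lambda w - {\cal L} w - b \cdot Dw = f$. Define $L_\tau w = \lambda w - {\cal L} w - \tau\, b \cdot Dw$ for $\tau \in [0,1]$; the estimate \eqref{sch4} holds uniformly in $\tau$ for $\lambda$ large (the constant only improves as $\tau$ decreases), so the set of $\tau$ for which $L_\tau : C_b^{\alpha_0+\beta} \to C_b^\beta$ is onto is open and closed, hence all of $[0,1]$. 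Uniqueness is immediate from \eqref{sch4} applied to the difference of two solutions with $f = 0$. Again, the cleanest route is to invoke Theorem 4.3 of \cite{Pr14} and Theorem 3.4 of \cite{Pr10} where precisely this argument is carried out, and simply note that the hypothesis $\alpha_0 + \beta < 2$ is used only to keep ${\cal L} w$ well defined and Hölder (as recalled in the paragraph preceding \eqref{eq1}), while the statement remains true without it.

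\textbf{Smallness of the gradient.} From \eqref{sch4}, $[Dw_\lambda]_{C_b^{\alpha_0+\beta-1}} \le C_0 \|f\|_{C_b^\beta}$ uniformly in $\lambda \ge 1$, and $\|w_\lambda\|_0 \le C_0 \lambda^{-1} \|f\|_{C_b^\beta} \to 0$. An interpolation inequality of the form $\|Dw\|_0 \le C \|w\|_0^{\vartheta} \, [Dw]_{C_b^{\alpha_0+\beta-1}}^{1-\vartheta}$ (valid since $1 < \alpha_0 + \beta$, with $\vartheta = (\alpha_0+\beta-1)/(\alpha_0+\beta) \in (0,1)$) then gives $\|Dw_\lambda\|_0 \le C \lambda^{-\vartheta} \|f\|_{C_b^\beta} \to 0$ as $\lambda \to \infty$. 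Hence there is $\lambda_0 = \lambda_0(d, \|b\|_{C_b^\beta}, \alpha_0(\nu), \beta, \nu) \ge 1$ with $\|Dw_\lambda\|_0 < 1/3$ for all $\lambda \ge \lambda_0$. The main obstacle in the whole argument is the first stage: establishing the $\alpha_0$-order smoothing of the resolvent in Hölder norms purely from the single gradient bound \eqref{grad}, which requires the iteration/bootstrap on $P_t$ (differentiating $P_t = P_{t/n} \circ \cdots \circ P_{t/n}$ and summing the $t^{-1/\alpha_0}$ singularities) carried out in \cite{Pr10}; everything downstream is soft functional analysis. I would therefore lean on those references for the kernel-level estimates and present the continuity argument and interpolation step in detail.
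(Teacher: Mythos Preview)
Your proposal and the paper's proof both defer the substantive Schauder theory to Theorem 4.3 of \cite{Pr14} and Theorem 3.4 of \cite{Pr10}, and your final interpolation step for $\|Dw_\lambda\|_0 < 1/3$ is essentially identical to the paper's (same inequality, same exponent $\vartheta = (\alpha_0+\beta-1)/(\alpha_0+\beta)$). So at the level of what is actually proved, the approaches coincide.

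Where your sketch diverges is in how you describe absorbing the drift term. You propose to write $w = R_\lambda(f + b\cdot Dw)$ and absorb $b\cdot Dw$ as a ``lower-order'' perturbation for large $\lambda$. This does not work at the endpoint $\alpha_0 = 1$, which is allowed by the hypotheses: there $\alpha_0+\beta-1 = \beta$, so $[Dw]_\beta$ is exactly the top-order seminorm, and $\|b\cdot Dw\|_{C_b^\beta}$ contains the term $\|b\|_0\,[Dw]_\beta$, which is of the same order as the left-hand side and does not shrink as $\lambda\to\infty$. The paper points out that \cite{Pr14} handles this by a localization/coefficient-freezing procedure (reducing to constant drift $b(x)=k$, then patching), which is precisely what gives smallness in the absorption. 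Since you ultimately cite those references for this step, the gap is in the narrative rather than in the logical dependence, but you should replace ``absorbing for $\lambda$ large'' with ``localization and coefficient freezing'' when describing the mechanism. Also a minor slip: for $\alpha_0\ge 1$ the inclusion reads $C_b^{\alpha_0+\beta-1}\subseteq C_b^\beta$, not $\supseteq$.
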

\begin{proof} We only make some comments on  $C_0$ and $\lambda_0$.
 Let us first consider $C_0$. 
 To see that $C_0 =  C_0({\alpha_0}(\nu), d, \beta, \| b\|_{C^{\beta}_b}, \nu)$ 
 we look into the proof of Theorem 4.3 in \cite{Pr14}. In such proof the Schauder estimates \eqref{sch4} are first established as apriori estimates by a localization procedure. 
 This method is based on Schauder estimates
already proved   in the constant coefficients case, i.e.,  when $b(x) =k$, ${x \in {\mathbb R}^d}$ (see Theorem 4.2 in \cite{Pr14}). 
The Schauder constant $C_0$ depends on the Schauder constant  $c$ appearing in formula (16) of Theorem 4.2 in \cite{Pr14} when  ${\lambda \ge 1}$. Such constant $c$ depends on ${\alpha_0}(\nu), \beta, d$  and also on the constant $c_{{\alpha_0}}$ of  the gradient estimates \eqref{grad} (see, in particular, estimates (18)-(21) in the proof of Theorem 4.2 in \cite{Pr14}). 

\noindent Let us consider $\lambda_0$. Recall  the  simple estimate  
$ \| Dw_{\lambda}\|_{0} $ $\le N
[Dw_{\lambda}]_{C^{{\alpha_0} + \beta - 1}_b}^{\frac{1}{{\alpha_0} + \beta}}
\, \| w_{\lambda} \|_{0}^{ \frac{ {\alpha_0} + \beta -1 }{{\alpha_0} + \beta}} ,$ where $N =N({\alpha_0}, \beta, d)$ (cf. the  proof of Theorem 3.4 in \cite{Pr10}).
By  \eqref{sch4}  we get
$  \| Dw_{\lambda}\|_{0} 
   $ $\le N  C_0  \, \lambda^{- {\frac{{\alpha_0} + \beta \, - 1 }{{\alpha_0} +
\beta}}} \; \| f\|_{C^{\beta}_b}, $ $\lambda \ge 1$, and the  assertion follows by choosing  $\lambda_0 > 1 \vee 
(3N  C_0)^{\frac{{\alpha_0} + \beta  }{{\alpha_0} + \beta -1}}$.
\end{proof}

Currently we do not know if the statements in Theorem \ref{reg}
hold also  when ${\alpha_0}  \in (0,1)$ (maintaining all the other assumptions). 

Now we apply Theorem \ref{d32} to get   Davie's type uniqueness  for the SDE \eqref{66}.
\begin{theorem} \label{main}
Let $L$ be a $d$-dimensional L\'evy process on $(\Omega, {\cal F}, P)$ with generating triple $(\nu, 0,0)$ satisfying Hypothesis \ref{nondeg1} with ${\alpha_0} \in [1,2)$. Suppose also that 
$ \int_{\{ |x| > 1  \, \}} |y|^{\theta} \nu (dy) < \infty,
 $ for some $\theta >0.$
Let us consider \eqref{66} with  
 $b \in C_{b}^{0,\beta}\left(  \mathbb{R} ^{d} ; {\mathbb R}^d \right) $
  and  ${ \beta \in \big( 1 - \frac{{\alpha_0}}{2},1 } \big]$.

Then $L$ satisfies Hypothesis \ref{primo} and, for any $T>0$, there exists a function $\phi$ 
as in Theorem \ref{d32}  such that  assertions (i)-(v)  hold on some almost sure event $\Omega'$.
 \end{theorem}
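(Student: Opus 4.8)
The plan is to verify that the hypotheses of Theorem \ref{d32} are met by the stable-type setting described here, and then simply invoke that theorem. The SDE \eqref{66} is the time-homogeneous special case of \eqref{SDE} with $b(t,x) = b(x) \in C_b^{0,\beta}(\R^d;\R^d)$, so $b \in L^\infty(0,T; C_b^{0,\beta}(\R^d;\R^d))$ trivially, and Hypothesis \ref{zero} holds by assumption (the moment condition $\int_{\{|y|>1\}} |y|^\theta \nu(dy) < \infty$). Hence the only thing to check is Hypothesis \ref{primo}, namely strong existence together with the $L^p$-Lipschitz estimate \eqref{ciao221}, uniform in the initial time $s$.

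First I would note that since $\beta > 1 - \alpha_0/2$ and $\alpha_0 \in [1,2)$, we are exactly in the range where Theorem \ref{reg} applies: choosing $\lambda \ge \lambda_0$ large, equation \eqref{eq1} (equivalently \eqref{mia} with $f = b$) has a unique solution $u = u_\lambda \in C_b^{\alpha_0 + \beta}(\R^d;\R^d) \subset C_b^{1+\gamma}(\R^d;\R^d)$ with $2\gamma > \alpha_0$ (take $\gamma = \alpha_0 + \beta - 1$, so $2\gamma = 2\alpha_0 + 2\beta - 2 > \alpha_0$ precisely because $\beta > 1 - \alpha_0/2$), and moreover $\|Du_\lambda\|_0 < 1/3$. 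This places us under the hypotheses of Theorem \ref{uno1} and Theorem \ref{uno11}. Then Theorem \ref{uno11} gives, for every $s \in [0,T]$ and $x \in \R^d$, a pathwise unique strong solution $\tilde X^{s,x}$ to \eqref{66}, and the $L^p$-estimate \eqref{ciao22}, which is literally \eqref{ciao221} with $C(T) = C(T,\nu,p,\lambda,d,\gamma,\|u_\lambda\|_{C^{1+\gamma}_b})$ independent of $s$, $x$, $y$. Thus both parts (i) and (ii) of Hypothesis \ref{primo} hold.

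With Hypotheses \ref{primo} and \ref{zero} verified, Theorem \ref{d32} applies directly: for each $T>0$ there is a measurable function $\phi$ as in \eqref{nuova} and an almost sure event $\Omega'$ (independent of $s,t,x$) on which assertions (i)--(v) hold. In particular (v) gives the Davie-type uniqueness for \eqref{66}. I would also remark that one needs to check \eqref{66} fits the form \eqref{SDE}: the generating triplet is $(\nu,0,0)$, so $Q=0$ and $a=0$, consistent with the normalization \eqref{d67} assumed throughout Sections 3--5; and the Blumenthal--Getoor condition $\alpha_0 \in [1,2)$ together with Hypothesis \ref{nondeg1} is exactly what Theorem \ref{uno1}/\ref{uno11} require (via the solvability of \eqref{eq1}).

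There is essentially no obstacle here beyond bookkeeping: the real work was done in Theorems \ref{reg}, \ref{uno1}, \ref{uno11} (reducing strong well-posedness and the $L^p$-bound to Schauder estimates for the resolvent equation) and in Theorem \ref{d32} itself. The one point deserving a sentence of care is the arithmetic $2\gamma > \alpha_0 \Leftrightarrow \beta > 1 - \alpha_0/2$, which is precisely the standing assumption on $\beta$, and the verification that $\lambda$ can be taken large enough (via $\lambda_0$ in Theorem \ref{reg}) to secure $\|Du_\lambda\|_0 < 1/3$, which is the remaining structural hypothesis of Theorem \ref{uno1}. After that, the conclusion is immediate.
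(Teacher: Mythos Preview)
Your approach is essentially the same as the paper's, but there is one technical oversight. Theorem \ref{reg} requires $0<\beta<1$ with $\alpha_0+\beta\in(1,2)$, and Theorem \ref{uno1} needs $\gamma\in(0,1)$; with your choice $\gamma=\alpha_0+\beta-1$ this forces $\alpha_0+\beta<2$, which is \emph{not} automatic from $\beta\in(1-\alpha_0/2,1]$ (e.g.\ $\alpha_0=1.5$, $\beta=0.8$ gives $\alpha_0+\beta=2.3$). The paper deals with this by first disposing of $\beta=1$ separately (Lipschitz drift, so Hypothesis \ref{primo} is trivial) and then, for $\beta\in(1-\alpha_0/2,1)$, invoking the inclusion $C_b^{0,\beta'}\subset C_b^{0,\beta}$ for $\beta\le\beta'$ to replace $\beta$ by any smaller exponent in $(1-\alpha_0/2,\,2-\alpha_0)$, which guarantees $\alpha_0+\beta<2$ and $\gamma\in(0,1)$. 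Once you insert this reduction, your argument is identical to the paper's, including the verification that the constant $C(T)$ in \eqref{ciao22} depends on the data only through $\|b\|_{C^\beta_b}$, $\nu$, $d$, $p$, $\beta$, $T$ (via the Schauder bound \eqref{sch4} on $\|u_\lambda\|_{C^{1+\gamma}_b}$), as required for \eqref{ciao221}.
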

\begin{proof}
When $\beta =1$ Hypothesis \ref{primo} is clearly satisfied. Let us consider $\beta \in \big( 1 - \frac{{\alpha_0}}{2},1  \big).$  Since $C_{b}^{\beta'} (  \mathbb{R} ^{d} , {\mathbb R}^d )
  \subset C_{b}^{\beta} (  \mathbb{R} ^{d} , {\mathbb R}^d ) $
 when $0 < \beta \le \beta' \le 1$,  we may  assume that  $1 - \frac{\alpha_0}{2} < \beta <  2 - {\alpha_0}$.
 To verify
 Hypothesis \ref{primo}  we  use   Theorems \ref{reg}
 and 
\ref{uno11}. By
   Theorem \ref{reg} we have
   a solution $u_{\lambda}
   \in C^{1+ \gamma}_b ({\mathbb R}^d, {\mathbb R}^d)$
   to  \eqref{eq1} with
    $\gamma = {\alpha_0} - 1 + \beta \in (0,1)$ for any $\lambda\ge 1$.  Note that  $
  2 \gamma$ $ = 2{\alpha_0} - 2 + 2 \beta$ $ > {\alpha_0}.$
 Choosing $\lambda = \lambda_0(d, \| b\|_{C^{\beta}_b}, {\alpha_0}(\nu), \beta) $
    we obtain that also $\| Du_{\lambda}\| < 1/3$ holds. 
  
 Using  Theorem
\ref{uno11} we can check the validity of 
   \eqref{ciao221}. Note that 
 the constant $C(T)$ appearing in \eqref{ciao22}
 depends on $T$,  $\nu$, $p$, ${\alpha_0} (\nu)$, $\lambda,$ $d$,  $\gamma$
 and $\| u_{\lambda}\|_{C^{1+ \gamma}_b}$. However 
 by Theorem \ref{reg} 
 $\gamma = {\alpha_0} - 1 + \beta$, 
  $\lambda = \lambda_0( d, \| b\|_{C^{\beta}_b}, {\alpha_0}, \beta)$ 
 and 
 $\| u_{\lambda}\|_{C^{1+ \gamma}_b}$ $ = \| u_{\lambda}\|_{C^{{\alpha_0} + \beta}_b} $ $\le N({\alpha_0}, \beta, d) \, C_0
  \, \|b \|_{C^{\beta}_b}
 $
where $C_0$ appears in the Schauder estimates \eqref{sch4}. It follows that $C(T)$ in \eqref{ciao22} has the right dependence on $d, p, $ $ \beta, \nu$, $\|b \|_{C^{\beta}_b}$ and  $T$ as required in   \eqref{ciao221}.
To finish the proof we  
  apply Theorem \ref{d32} since Hypotheses \ref{primo} and   \ref{zero} hold.
 \end{proof} 
 
\begin{remark} {\em Theorem \ref{main}  shows that under suitable  assumptions on $L$ and $b$  Davie's uniqueness (or path-by-path uniqueness)  holds for the SDE \eqref{SDE}.
 Moreover, the unique strong solution is given by a function $\phi$ which satisfies all the assertions of  Theorem \ref{d32}, 
 including \eqref{sd} and \eqref{toro},
for any $\omega \in \Omega'$, where $\Omega'$ 
is an almost sure event 
independent of $s,t$ and $x$.   There are no similar  results in the  literature on stochastic flows for SDEs \eqref{SDE}  driven by stable type processes  (cf.
 \cite{Pr10}, \cite{Pr14} and  the recent paper \cite{CSZ} which contains the most general available results about  existence and $C^1$-regularity of   
 stochastic flow). 
}
\end{remark}

\subsection{ Davie's type uniqueness   when 
  $\alpha_0 = \alpha \in (0,1)$}

Here we only consider the SDE \eqref{66} when 
 $L = L_{\alpha}$ is a symmetric rotationally  invariant $\alpha $-stable process with $\alpha \in (0,1)$ (the case of $\alpha \in [1,2)$ is already treated   in Theorem \ref{main}). For each $\alpha \in (0,1)$ its L\'evy measure $\nu = \nu_{\alpha}$ has density 
 $ \frac{ c_{\alpha,d}}{|y|^{d+ \alpha}},$ $  y \not =0,$
 and its generator ${\cal L} = {\cal L}^{(\alpha)}$  (see \eqref{lll}) 
   coincides with the
 fractional Laplacian
 $-(- \triangle)^{\alpha/2}$
 (see Example 32.7 in \cite{sato}). Note that, for any $g \in C^1_b({\mathbb R}^d)$, the mapping: 
\begin{equation} \label{lll1}
x \mapsto  {\cal L}g(x) =
c_{\alpha, d}\int_{{\mathbb R}^d} \frac{   g (x+y) -  g (x) }  {|y|^{d+ \alpha}}
   \,  dy  \;\; \text{belongs to $C_b({\mathbb R}^d)$.}
\end{equation} 
Clearly  $\alpha = \alpha_0$ (see \eqref{grr}).  Using 
Theorem
\ref{uno11}
of the previous section together with Theorem \ref{sil}
 we can apply    Theorem \ref{d32} and obtain
 \begin{theorem} \label{hp}
Let $L$ be a $d$-dimensional symmetric rotationally  invariant $\alpha $-stable process with $\alpha \in (0,1)$ defined on 
$(\Omega, {\cal F}, P)$. 
 Let us consider the SDE \eqref{66} with  
 $b \in C_{b}^{0, \beta}\left(  \mathbb{R} ^{d} ; {\mathbb R}^d \right) $
  and  ${ \beta \in \big( 1 - \frac{\alpha}{2},1 } \big]$.

Then $L$ satisfies Hypotheses \ref{primo} and \ref{zero} and, for any $T>0$, there exists a function $\phi$ as in Theorem \ref{d32}  such that  assertions (i)-(v)  hold on some almost sure event $\Omega'$.
 \end{theorem}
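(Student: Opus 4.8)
The plan is to verify that the hypotheses of Theorem \ref{d32} are met by the process $L=L_\alpha$ and the drift $b$, reducing everything to the machinery already developed. Hypothesis \ref{zero} is immediate: the L\'evy measure of a symmetric rotationally invariant $\alpha$-stable process has density $c_{\alpha,d}|y|^{-d-\alpha}$, so $\int_{\{|x|>1\}}|y|^\theta\nu(dy)<\infty$ exactly for $\theta\in(0,\alpha)$, and any such $\theta$ works. The real task is Hypothesis \ref{primo}, i.e. strong existence and uniqueness for \eqref{66} together with the $L^p$-Lipschitz bound \eqref{ciao221}. As in the proof of Theorem \ref{main}, the natural route is Theorem \ref{uno11}, which delivers both the pathwise unique strong solution for arbitrary initial time $s\in[0,T]$ and the desired $L^p$-estimate \eqref{ciao22} with the correct dependence of the constant, provided one has a solution $u_\lambda\in C^{1+\gamma}_b(\R^d;\R^d)$ of the resolvent equation \eqref{eq1} with $2\gamma>\alpha_0$ and $\|Du_\lambda\|_0<1/3$.

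First I would reduce to $\beta\in(1-\tfrac{\alpha}{2},\,2-\alpha)$ using the inclusion $C_b^{\beta'}\subset C_b^{\beta}$ for $\beta\le\beta'$, so that $\alpha+\beta\in(1,2)$; for $\beta=1$ the Lipschitz case Hypothesis \ref{primo} is automatic. The missing analytic input for $\alpha\in(0,1)$ — which is exactly the case Theorem \ref{reg} does \emph{not} cover — is the Schauder-type solvability of $\lambda w-\mathcal Lw-b\cdot Dw=f$ in $C^{\alpha+\beta}_b$ for the fractional Laplacian $\mathcal L=-(-\triangle)^{\alpha/2}$. Here I would invoke the analytic result from \cite{Si} (cited in the introduction and alluded to via ``Theorem \ref{sil}'' / Remark 5.5 in \cite{Pr14}): for the fractional Laplacian with a $\beta$-H\"older drift and $\alpha+\beta>1$, one obtains a unique $w_\lambda\in C^{\alpha+\beta}_b$ with Schauder estimates $\lambda\|w_\lambda\|_0+[Dw_\lambda]_{C^{\alpha+\beta-1}_b}\le C_0\|f\|_{C^\beta_b}$, and consequently, by the interpolation inequality $\|Dw_\lambda\|_0\le N[Dw_\lambda]_{C^{\alpha+\beta-1}_b}^{1/(\alpha+\beta)}\|w_\lambda\|_0^{(\alpha+\beta-1)/(\alpha+\beta)}$, the smallness $\|Du_\lambda\|_0<1/3$ for $\lambda$ large. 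Taking $f=b$ and $u_\lambda=w_\lambda$, with $\gamma=\alpha+\beta-1\in(0,1)$, note $2\gamma=2\alpha+2\beta-2>\alpha$ since $\beta>1-\alpha/2$, so $\mathcal Lu_\lambda\in C_b$ is well defined (using $\alpha<1+\gamma$) and \eqref{eq1} holds classically.

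With $u_\lambda$ in hand, Theorem \ref{uno11} gives, for every $x\in\R^d$ and $s\in[0,T]$, a pathwise unique strong solution ${\tilde X}^{s,x}$ to \eqref{66} and the bound $\sup_{s\in[0,T]}E[\sup_{s\le t\le T}|U^{s,x}_t-U^{s,y}_t|^p]\le C(T)|x-y|^p$ for $p\ge2$, which is precisely \eqref{ciao221}; one checks the constant $C(T)$ depends only on $T,\nu,p,\alpha,d,\beta$ and $\|b\|_{C^\beta_b}$ (through $\gamma$, $\lambda_0$ and $\|u_\lambda\|_{C^{1+\gamma}_b}\le N\,C_0\|b\|_{C^\beta_b}$), so it has the form required in Hypothesis \ref{primo}(ii). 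Hence Hypotheses \ref{primo} and \ref{zero} both hold, and applying Theorem \ref{d32} yields the function $\phi$ and assertions (i)–(v) on an almost sure event $\Omega'$ independent of $s,t,x$. The main obstacle is purely citational rather than computational: one must be careful that the analytic Schauder/solvability statement for the fractional Laplacian with H\"older drift in the regime $\alpha\in(0,1)$, $\alpha+\beta\in(1,2)$ is actually available in \cite{Si} (as opposed to Theorem \ref{reg}, which was stated only for $\alpha_0\ge1$) — this is the one nontrivial ingredient, and the rest of the proof is a transcription of the argument in Theorem \ref{main}.
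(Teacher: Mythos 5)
Your proposal is correct and follows essentially the same route as the paper: Hypothesis \ref{zero} is immediate from the form of the L\'evy measure, and Hypothesis \ref{primo} is obtained by feeding the Schauder solvability of the resolvent equation for the fractional Laplacian with H\"older drift (Theorem \ref{sil}, which the paper establishes from Theorem 1.1 of \cite{Si}) into Theorem \ref{uno11}, exactly as in the proof of Theorem \ref{main} with $\alpha_0=\alpha$. The one ingredient you flag as ``citational'' --- the $C^{\alpha+\beta}_b$ Schauder estimate in the regime $\alpha\in(0,1)$, $\alpha+\beta>1$ --- is indeed supplied by the paper's Theorem \ref{sil}, so nothing is missing.
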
 
 We first state
 a result which is related to Theorem \ref{reg}.  It shows sharp $C_b^{\alpha +\beta}$-regularity of  solutions to 
\eqref{mia}.
The proof 
 is based on  
 Theorem 1.1 in \cite{Si}. 
 \begin{theorem} 
\label{sil} Let us consider the fractional Laplacian ${\cal L} $ given in \eqref{lll1} with $\alpha \in (0,1)$. Let $\beta  \in (0, 1)$
 such that $\alpha + \beta >1$.
 Then,  for any
 $\lambda \ge 1$, $f \in C^{\beta}_b ({\mathbb R}^d)$,
   there exists a unique  solution
  $w= w_{\lambda} \in C^{\alpha + \beta}_b
  ({\mathbb R}^d)$ to \eqref{mia}.
 Moreover, 
  there exists  $C_0 $ $= C_0(\alpha, $ $ d, \beta, \| b\|_{C^{\beta}_b}) >0$ such that
 \begin{equation} \label{sch44}
\lambda \| w_{\lambda}\|_0 + [Dw_{\lambda}]_{C^{\alpha + \beta - 1}_b} 
 \le C_0 \| f\|_{C^{\beta}_b}, \;\;  \lambda \ge 1.
 \end{equation}
 Finally,  we have 
 $ \| Dw_{\lambda} \|_{0} < 1/3$, for any  $\lambda  \ge \lambda_0$, with $\lambda_0( d, \| b\|_{C^{\beta}_b}, \alpha , \beta)\ge 1$.
\end{theorem}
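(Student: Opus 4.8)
The plan is to follow the scheme used for Theorem \ref{reg} (that is, Theorem 4.3 in \cite{Pr14} and Theorem 3.4 in \cite{Pr10}), but replacing the gradient-estimate based localization argument — which works only when $\alpha_0 \ge 1$ — by the non-perturbative regularity result of \cite{Si}. First I would record the smoothing properties of the rotationally invariant $\alpha$-stable convolution semigroup $(P_t)$: since the density $p_t$ of $L_t$ satisfies $\int_{\R^d}|\nabla p_t(y)|\,dy \le c\,t^{-1/\alpha}$, estimate \eqref{grad} holds with $\alpha_0=\alpha$, hence the drift-free resolvent $R_\lambda g = \int_0^\infty e^{-\lambda t} P_t g\,dt$ maps $C^\beta_b(\R^d)$ into $C^{\alpha+\beta}_b(\R^d)$ and solves $\lambda w - \mathcal{L}w = g$, with the bound $\lambda\|w\|_0 + [Dw]_{C^{\alpha+\beta-1}_b} \le c\,\|g\|_{C^\beta_b}$ for $\lambda\ge1$; this is the $b\equiv 0$ case of \eqref{sch44} and, via a standard continuity method, it yields classical solvability of \eqref{mia} when $b$ is smooth.

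The core step is the a priori estimate \eqref{sch44} for the full equation \eqref{mia}. Here lies the main obstacle: for $\alpha\in(0,1)$ the first-order drift term $b\cdot Dw$ is of strictly higher differential order than $\mathcal{L}w$ (order $1$ versus order $\alpha$), so the equation is supercritical and the usual perturbative freezing-of-coefficients Schauder argument is unavailable. This is precisely where Theorem 1.1 of \cite{Si} enters: it supplies, for a bounded solution of an equation of the type $\mathcal{L}w + b\cdot Dw = g$ with merely $b\in L^\infty$ and $\alpha+\beta>1$, an interior Hölder estimate giving the decisive regularity gain that the perturbative theory cannot reach. Rewriting \eqref{mia} as $\mathcal{L}w + b\cdot Dw = \lambda w - f$ and combining the maximum-principle bound $\lambda\|w\|_0 \le \|f\|_0$ with a bootstrap — first obtain $w\in C^\gamma_b$ for a small $\gamma>0$, so that $\lambda w - f \in C^{\gamma\wedge\beta}_b$, then iterate the regularity gain until the exponent reaches $\alpha+\beta$, using $\alpha+\beta>1$ to cross the critical threshold — gives \eqref{sch44}, a covering/translation-invariance argument upgrading the interior bound to the global one on $\R^d$.

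Existence of $w_\lambda\in C^{\alpha+\beta}_b(\R^d)$ then follows by approximation: take $b_n\in C^\infty_b$ with $b_n\to b$ in $C^{\beta'}_b$ for every $\beta'<\beta$, solve $\lambda w_n - \mathcal{L}w_n - b_n\cdot Dw_n = f$ as above, apply \eqref{sch44} uniformly in $n$, and extract a limit in $C^{\alpha+\beta'}_{loc}$ which, by lower semicontinuity of the Hölder seminorm, lies in $C^{\alpha+\beta}_b$ and solves \eqref{mia}. Uniqueness is immediate: the difference $v$ of two bounded solutions solves the homogeneous equation, so \eqref{sch44} with $f\equiv0$ (or, directly, the maximum principle using $\lambda\ge1$, $Dv=0$ and $\mathcal{L}v\le0$ at an approximate maximum) forces $v\equiv0$. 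Finally, interpolation gives $\|Dw_\lambda\|_0 \le N\,[Dw_\lambda]_{C^{\alpha+\beta-1}_b}^{1/(\alpha+\beta)}\,\|w_\lambda\|_0^{(\alpha+\beta-1)/(\alpha+\beta)}$ with $N=N(\alpha,\beta,d)$; plugging in \eqref{sch44} yields $\|Dw_\lambda\|_0 \le N C_0\,\lambda^{-(\alpha+\beta-1)/(\alpha+\beta)}\,\|f\|_{C^\beta_b}$, so choosing $\lambda_0=\lambda_0(d,\|b\|_{C^\beta_b},\alpha,\beta)\ge1$ large enough gives $\|Dw_\lambda\|_0<1/3$ for all $\lambda\ge\lambda_0$. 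The hard part, to repeat, is the a priori $C^{\alpha+\beta}$-bound in the supercritical range $\alpha<1$, which rests entirely on the deep estimate of \cite{Si}.
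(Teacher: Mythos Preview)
Your overall architecture is right and matches the paper: use Silvestre's regularity theorem for the a priori $C^{\alpha+\beta}_b$-estimate, approximate $b$ (and $f$) by smooth functions for existence, invoke the maximum principle for uniqueness, and interpolate for the final bound on $\|Dw_\lambda\|_0$. Two points, however, deserve attention because the paper handles them differently and your versions are shakier.

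\emph{First, applying \cite{Si}.} Theorem 1.1 in \cite{Si} is a \emph{parabolic} interior estimate, for equations of the form $\partial_t u + (-\Delta)^{s} u + b\cdot Du = g$. The paper applies it to the stationary equation by the lifting $\tilde v(t,x)=e^{\lambda t}v(x)$, which converts $\lambda v - \mathcal L_b v = f$ into $\partial_t \tilde v - \mathcal L_b \tilde v = e^{\lambda t}f$ on $[-1,0]\times B_1$; since the new right-hand side is already $C^\beta$ in $x$, a \emph{single} application of \cite{Si} gives the local $C^{\alpha+\beta}$-bound, and translation invariance globalizes it. You do not mention this device; instead you propose a bootstrap starting from merely bounded $w$, which needs an initial $L^\infty\to C^\gamma$ step that Theorem 1.1 of \cite{Si} does not directly supply. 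The paper sidesteps this entirely by proving the a priori estimate only for $C^1_b$-solutions (so $\lambda w - f\in C^\beta_b$ from the start) and manufacturing such solutions in the smooth case; this is both simpler and cleaner than an iteration.

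\emph{Second, existence for smooth $b$.} Your ``standard continuity method'' is problematic here precisely because the equation is supercritical: with $\alpha<1$ the drift $b\cdot D$ does not map $C^{\alpha+\beta}_b$ into $C^\beta_b$ (it only lands in $C^{\alpha+\beta-1}_b\supsetneq C^\beta_b$), so the homotopy $L_t=\lambda-\mathcal L - t\,b\cdot D$ is not a bounded family between the spaces on which $L_0$ is invertible, and the usual open--closed argument breaks down. The paper instead constructs, for $b,\tilde f\in C^\infty_b$, a $C^1_b$-solution \emph{probabilistically}: it shows the Markov semigroup $R_t$ of the SDE $dX_t=b(X_t)\,dt+dL_t$ satisfies $\|DR_t\tilde f\|_0\le c\,(t\wedge 1)^{(\beta-1)/\alpha}\|\tilde f\|_{C^\beta_b}$ (via the Bismut-type formula of \cite{Z} and interpolation between $UC_b$ and $C^1_b$), so that $\tilde w=\int_0^\infty e^{-\lambda t}R_t\tilde f\,dt\in C^1_b$, and then checks $\tilde w$ solves the equation. (The paper also notes that a vanishing-viscosity argument, as in Section 3.2 of \cite{Si}, is an alternative.) If you want to avoid the probabilistic route you should replace the continuity method by one of these.
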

\begin{proof}  The uniqueness  follows by the maximum principle (see Proposition 3.2 in \cite{Pr10} or Proposition 4.1 in \cite{Pr14}) which states that $\lambda \| w_{\lambda}\|_0 \le  \| f\|_0$. Let ${\cal L}_b$ be the fractional Laplacian ${\cal L}$ plus the drift $b$ (i.e., 
 ${\cal L}_b$ $= {\cal L} + b \cdot D$). The proof proceeds in some steps.

\hh \textit{I step.} Let $\lambda \ge 1$.
 We provide apriori estimates for classical $C^{1}_b$-solutions $u$ to $\lambda u - {\cal L}_b u =f$ on ${\mathbb R}^d$ (with $f \in C_b^{\beta}({\mathbb R}^d)$, $b \in C_b^{\beta}({\mathbb R}^d; {\mathbb R}^d)$ and $\alpha + \beta >1$).

Let $u = u_{\lambda} \in C^{1}_b({\mathbb R}^d)$ be a solution to $\lambda u -  {\cal L}_b u =f$ on ${\mathbb R}^d$; in the sequel we will 
 consider 
open balls $B_r(x_0)$ of center $x_0 \in {\mathbb R}^d$ and radius $r>0$. 
  Let $x_0 \in {\mathbb R}^d.$
One can define   $v(x) = u(x+ x_0)$, $x \in {\mathbb R}^d$. 
 Since ${\cal L}v(x)$ $= {\cal L}u (x+ x_0)$, $x \in {\mathbb R}^d,$ 
 we get that  $v \in C^1_b({\mathbb R}^d)$ solves $\lambda v -  {\cal L}_{b_0} v = f_0$
on ${\mathbb R}^d$ where ${\cal L}_{b_0}$ has the  drift $b_0(\cdot) = b( \cdot + x_0 )$ and $f_0 (\cdot) = f(\cdot + x_0) $.

Setting $\tilde v(t,x) =e^{\lambda t} v(x)$, $\tilde f_0(t,x) = e^{\lambda t} f_0(x)$, $t \in [-1,0]$, $x \in {\mathbb R}^d$, we see  that $\tilde v$ is  a bounded solution of  
$$
\partial_t \tilde v - {\cal L}_{b_0} \tilde v = \tilde f_0
 \;\; \text{on}\;\;  [-1,0] \times B_1(0) 
$$
according to the  definition  of viscosity solution given at the  beginning of  Section 3.1 in \cite{Si}.
Hence we can apply  Theorem 1.1 in \cite{Si} to $\tilde v$. Recall that in the Silvestre notations his  $s \in (0,1)$ is our  $\alpha/2$ and his $\alpha \in (0, 2s)$ corresponds with  our $\alpha + \beta -1$.   
We deduce by \cite{Si} that $\tilde v(t, \cdot) \in C^{\alpha + \beta}(B_{1/2}(0))$
 and moreover
 \begin {gather*}
 \|v \|_{C^{\alpha + \beta}(B_{1/2}(0))} = \|\tilde v \|_{ L^{\infty}([-1/2,0]; C^{\alpha + \beta}(B_{1/2}(0))}
\\ \le 
C_2 (\| \tilde v \|_{ L^{\infty}([-1,0] \times {\mathbb R}^d)} + 
\|\tilde f  \|_{L^{\infty}([-1,0]; C^{\beta}(B_{1/2}(0))} )
=
C_2 (\|v \|_{0}  +   \|f_0 \|_{C^{\beta}_b({\mathbb R}^d)} ), 
\end{gather*} 
where $C_2$  depends only on $\| b_0 \|_{C^{\beta}_b({\mathbb R}^d; {\mathbb R}^d)}
$ $ = \| b \|_{C^{\beta}_b({\mathbb R}^d; {\mathbb R}^d)}$, $\alpha$ and $d$ and is independent of $\lambda$. 
 Thus we get that $u_{\lambda} \in C^{\alpha + \beta}(B_{1/2}(x_0))$
 with a bound for the $C^{\alpha + \beta}$-norm of $u_{\lambda} $  on 
$B_{1/2}(x_0)$ by the quantity $ C_2 (\|u_{\lambda} \|_{0}  +   \|f \|_{C^{\beta}_b({\mathbb R}^d)} )$.
Since $C_2$ is independent on $x_0$   it is clear that we have  
$u_{\lambda} \in  C^{\alpha + \beta}_b({\mathbb R}^d)$ (cf. for instance page 434 in \cite{Pr10}) and the following estimate holds with ${C_3 = C_3 (\| b \|_{C^{\beta}_b}, \alpha, d, \beta)>0}$
 \begin{equation*}
\begin{array}{l}
 \|u_{\lambda} \|_{C^{\alpha + \beta}_b({\mathbb R}^d)} \le C_3 (\|u_{\lambda} \|_{0}  +   \|f \|_{C^{\beta}_b({\mathbb R}^d)} ).
\end{array}
\end{equation*}
By Proposition 3.2 in \cite{Pr10} we know that $\lambda \| u_{\lambda}\|_0
 \le  \| f\|_0$. Hence we arrive at 
\begin{equation} \label{silvv}
 \|u_{\lambda} \|_{C^{\alpha + \beta}_b({\mathbb R}^d)} \le 2C_3   \|f \|_{C^{\beta}_b({\mathbb R}^d)},\;\;\; \lambda \ge 1.
\end{equation}
\hh \textit{II step.} Let $\lambda \ge 1$. We show the existence of  a $C^1_b$-solution to  $\lambda w - {\cal L}_b w= \tilde f$ when $b \in C^{\infty}_b ({\mathbb R}^d; {\mathbb R}^d)$ and $\tilde f \in C^{\infty}_b ({\mathbb R}^d)$. 
 
To construct the solution we use a probabilistic method (for an alternative vanishing viscosity method see Section 3.2 in \cite{Si}). Let $(X_t^x)$ be the  solution of $dX_t = b(X_t) dt + dL_t$, $X_0 =x \in {\mathbb R}^d$ and consider the associated Markov semigroup $(R_t)$, i.e.,
$R_t l (x)$ $= E [l(X_t^x)]$, $t \ge 0,$ $x \in {\mathbb R}^d$, $l \in UC_b({\mathbb R}^d)$ ($UC_b({\mathbb R}^d) \subset C_b({\mathbb R}^d)$ denotes  the Banach space of all uniformly continuous and bounded functions endowed with the sup-norm). Differentiating with respect to $x$ under the expectation (using the derivative of $X_t^x$ with respect to $x$, cf. \cite{Z}) it is straightforward to prove that $R_t g \in C^1_b({\mathbb R}^d)$, for any $t \ge 0$ and $g \in C^1_b({\mathbb R}^d)$.
For the given $\tilde f\in C^{\infty}_b ({\mathbb R}^d)$ we define 
\begin{equation} \label{ww1}
\tilde w(x) = \tilde w_{\lambda}(x) = \int_0^{\infty} e^{- \lambda t} R_t \tilde f(x)dt, \;\;\; x \in {\mathbb R}^d.
\end{equation}
It is clear that $\tilde w \in C_b({\mathbb R}^d)$. We now show that $\tilde w \in C^1_b({\mathbb R}^d)$ and solves our equation.
To this purpose we first prove that for $t>0$
 \begin{equation}
\label{stimo1}
\sup_{x \in {\mathbb R}^d } |DR_t \tilde f(x)| \le c(\alpha, \beta, \|D b\|_0) \,\,    { (t\wedge1)^{(\beta - 1) / \alpha} } \, \| \tilde f\|_{C^{\beta}_b({\mathbb R}^d)}.
\end{equation} 
Once  this estimate
is proved,  differentiating under the integral sign in \eqref{ww1} we obtain that     $w \in C^1_b({\mathbb R}^d)$ since $\alpha + \beta > 1$. 
Let us fix $t \in (0,1].$
 By Theorem 1.1 in \cite{Z} we know in particular that 
 $$ \| DR_t g\|_0 =
\sup_{x \in {\mathbb R}^d } |DR_t g(x)| \le {c(\alpha) \, e^{\| Db\|_0}}\,\,  {t^{- 1 / \alpha}} \, \| g\|_0,\;\;\; g \in C^1_b({\mathbb R}^d).
$$
 Using the total variation norm   as in  Lemma 7.1.5 of \cite{DZ} we 
deduce  that $R_t l$ is Lipschitz continuous for any $l \in UC_b({\mathbb R}^d)$ and moreover $|R_t l(x) - R_t l(y) | $ $\le {c(\alpha) \, e^{\| Db\|_0}}\,\, $ $ {t^{- 1 / \alpha}} \,|x-y|\, \| l\|_0,$ $x,y \in {\mathbb R}^d.$ 
By Theorem 1.1 in \cite{Z}, for any $g \in C^1_b({\mathbb R}^d)$,   we can  write   the directional derivative of $R_t g$ along $h \in {\mathbb R}^d$ as follows:
\begin{equation}
\label{zhang}
D_h R_t g(x) = E [g(X_t^x)\, J(t,x,h)],\;\; x \in {\mathbb R}^d,
\end{equation} 
where $J(t,x,h)$ is a suitable random  variable such that $(E|J(t,x,h)|^2 )^{1/2}$ $\le c(\alpha) \,$ $ e^{\| Db\|_0}\,\,  {t^{- 1 / \alpha}} |h|$, for any $x \in {\mathbb R}^d$.
Let again $l \in UC_b({\mathbb R}^d)$. Using mollifiers we can consider  an approximating sequence $(g_n) \subset C^{\infty}_b({\mathbb R}^d)$ such that $\| g_n - l\|_0 \to 0$ as $n \to \infty$. Using  \eqref{zhang}  when $g$ is replaced by  $g_n$ and passing to the limit it is not difficult to prove that $R_t  l \in C^1_b({\mathbb R}^d)$ and moreover \eqref{zhang} holds  when $g$ is replaced by $l$ (cf. page 480 in  \cite{PZ}).  

We have found that $R_t : UC_b({\mathbb R}^d) \to  C^1_b({\mathbb R}^d)$  is a linear and bounded operator and 
$ |DR_t l(x)| \le {c(\alpha) \, e^{\| Db\|_0}}\,\,  {t^{- 1 / \alpha}} $ $ \| l\|_0,$ for $x \in {\mathbb R}^d,$ $l \in UC_b({\mathbb R}^d).$ Moreover, 
$R_t : C_b^1({\mathbb R}^d) \to  C^1_b({\mathbb R}^d)$  is linear and bounded and 
$ |DR_t g(x)| \le e^{\| Db\|_{0}} \| Dg \|_0$, for $x \in {\mathbb R}^d,$ $g \in C^1_b({\mathbb R}^d)$.  To prove such estimate we fix $h \in \R^d$ and differentiate $R_t g(x)$ with respect to $x$ along the direction $h$.  One can 
show  that
\begin {equation} \label{z1}
D_{h} E[g(X_t^x)] = E [Dg(X_t^x) \eta_t]
\end{equation}
where  $ \eta_t = D_h X_t^x$ solves $\eta_t = h + \int_0^t Db (X_s^x) \eta_s ds$, $t \ge 0,$ $P$-a.s.. Note that  $|D_h X_t^x| \le |h| e^{\| Db\|_0 t}$ by the Gronwall lemma
 (cf.  page 1211 in \cite{Z}).

By interpolation techniques we know that $\big (UC_b ( {\mathbb R}^d), C_b^1 ({\mathbb R}^d) \big)_{\beta, \, \infty}
$ $ = C_b^{\beta} ({\mathbb R}^d),$  for  $\beta \in (0,1)$
 (cf. \cite[Chapter 1]{L} and the proof of Theorem 3.3 in \cite{Pr10});  it follows that for any $t \in (0,1]$ we have that 
 $R_t : C_b^{\beta}({\mathbb R}^d) \to  C^1_b({\mathbb R}^d)$  is linear and bounded and 
$ |DR_t f(x)| \le {c(\alpha, {\beta}) \, e^{\| Db\|_0}}\,\,  {t^{(\beta- 1) / \alpha}} $ $ \| f\|_{C^{\beta}_b},$ for any $x \in {\mathbb R}^d,$ $f \in C_b^{\beta}({\mathbb R}^d).$ 

We have verified  \eqref{stimo1}  when $t \in (0,1]$. 
If $t>1$ we use a standard argument based on the semigroup property and get,  for any  $x \in {\mathbb R}^d,$
 $|DR_t \tilde f(x)|= |D R_{1} (R_{t-1} \tilde f)(x)|$   
 $\le {c(\alpha) \, e^{\| Db\|_0}} \, \| R_{t-1} \tilde f\|_0 $ $\le {c(\alpha) \, e^{\| Db\|_0}}\,  \|\tilde f\|_0.$
 Thus  \eqref{stimo1} holds and we know that   $\tilde w \in C^1_b({\mathbb R}^d)$. 
To prove that $\tilde w$ is a solution we  first establish the identity
\begin{equation} \label{timed}
\partial_t (R_t \tilde f)(s,x) = R_s ({\cal L}_b \tilde f)(x) = {\cal L}_b (R_s \tilde f)(x), \;\; s \ge 0, \; x \in {\mathbb R}^d.
\end{equation} 
 By using Ito's formula (see  \cite[Section 2.3]{Ku}) and taking the expectation we find
$
E[\tilde f (X_{s+h}^x)] - E[\tilde f (X_{s}^x)] $ $ = \int_s^{s+h} E[ ({\cal L}_b \tilde f)(X^x_r)] dr, 
$ for $ h \in {\mathbb R}$ such that $s +h >0$.
  It follows that, for $x \in {\mathbb R}^d,$
\begin{gather}
\label{diro0}
\partial_t (R_t \tilde f)(s,x) = \lim_{h \to 0}
{h^{-1}}{ \big(R_{s+h} \tilde f (x) - R_{s} \tilde f (x) \big) }  = R_s ({\cal L}_b \tilde f)(x),\; s>0,
\\ \text{and } \;\;
\label{diro}
\lim_{h \to 0^+}
{h^{-1}}{ \big( R_{h} \tilde f (x) -  \tilde f (x) \big) } = {\cal L}_b \tilde f(x).
\end{gather} 
If $s>0$ by \eqref{diro} we get
 $\lim_{h \to 0^+}
\frac{ R_{h} (R_s \tilde f) (x) - R_{s} \tilde f (x)  }{h} = 
{\cal L}_b ( R_s\tilde f)(x)$ when $\tilde f$ in \eqref{diro} is replaced by $R_s \tilde f$. By    the semigroup law,  the last limit and \eqref{diro0} coincide and so  \eqref{timed} holds.
To check that $\tilde w$ verifies  $\lambda \tilde w - {\cal L}_b \tilde w= \tilde f$ we use \eqref{stimo1} and \eqref{timed}. 
 First by the Fubini theorem we have
$$
{\cal L}_b \tilde w (x) = \int_0^{\infty} e^{- \lambda t} {\cal L}_b (R_t \tilde f)(x)dt =  \int_0^{\infty} e^{- \lambda t} R_t ( {\cal L}_b\tilde f)(x)dt.
$$
By \eqref{timed} it follows that, for any $x \in {\mathbb R}^d$,
 $
{\cal L}_b \tilde w (x) = \int_0^{\infty} e^{-\lambda t} \frac{d}{dt} (R_t 
\tilde f(x))dt.
$
Integrating by parts, we get the assertion.

\hh \textit{III step.} Let $\lambda \ge 1$. We  prove the existence of  a $C^{\alpha+ \beta }_b$-solution to  $\lambda w - {\cal L}_b w=f$ on ${\mathbb R}^d$ when $b \in C^{\beta}_b ({\mathbb R}^d; {\mathbb R}^d)$ and $f \in C^{\beta}_b ({\mathbb R}^d)$,  $\alpha + \beta >1$, and show \eqref{sch44}.

Using convolution with mollifiers  and
   possibly  passing to  subsequences (see, for instance, page 431 in \cite{Pr10}) 
   one can consider operators ${\cal L}_{b_n}$ with 
drifts $b_n \in C^{\infty}_b ({\mathbb R}^d; {\mathbb R}^d)$   such that  
   $\| b_n \|_{C^{\beta}_b}$ $\le 
 $ $\| b \|_{C^{\beta}_b}$, $n \ge 1$, and   
 $b_n \to b $  $\;\; \text{in}$ $ \;\; C^{\beta'}(K; {\mathbb R}^d)$
 for any compact set $K \subset {\mathbb R}^d$ and $\beta' \in (0,  \beta)$. Similarly one can construct $(f_n) \subset C_b^{\infty}({\mathbb R}^d)$ such that  $\| f_n \|_{C^{\beta}_b}$ $\le 
 $ $\| f \|_{C^{\beta}_b}$, $n \ge 1$, and  
  $f_n \to f $  {in} $C^{\beta'}(K)$ for any compact set $K \subset {\mathbb R}^d$ and $\beta' \in (0,  \beta)$.
 By II step  there exist $C^1_b$-solutions $w_n $ to  ${\cal L}_{b_n} w_n = \lambda w_n - f_n $, $n \ge 1$.
By step I we know that 
 $w_n \in  C^{\alpha + \beta}_b({\mathbb R}^d)$, $n \ge 1$, with  the  estimate  
 \begin{equation} \label{silvv2}
 \|w_n \|_{C^{\alpha + \beta}_b({\mathbb R}^d)} \le 2  C_3 
  \|f \|_{C^{\beta}_b({\mathbb R}^d)},
\end{equation}
 ($C_3 = C_3 (\| b \|_{C^{\beta}_b}, \alpha, \beta, d)$ is independent of $\lambda$ and $n$).
 Possibly
passing to \ {a subsequence}  still denoted with $(w_n)$,
   we have that
  $
 w_n \to w $  in $  C^{\alpha + \beta'}(K),$
 for any compact set $K \subset {\mathbb R}^d$ with ${\beta}' >0$
  such that $1 < \alpha +
 {\beta}' < \alpha +\, \beta.$ Moreover, \eqref{silvv2} holds with $w_n$ replaced by $w.$ 
   We can easily
 pass to the limit in each term of 
  $ \lambda w_n(x)- {\cal L} w_n(x) - b_n(x)
   \cdot Dw_n(x)$ $ = f_n(x)$ as $n \to \infty$
 and obtain that $w$ solves our equation.

\hh  \textit{IV step.} We prove the final assertion.

We already know that there exists a unique solution $w_{\lambda} 
 \in C^{\alpha + \beta}_b({\mathbb R}^d)$ and that \eqref{sch44} holds.
To complete the proof we argue as in the final part of the proof of Theorem \ref{reg}. By 
    the interpolatory estimate   
$ \| Dw_{\lambda}\|_{0} $ $\le N(\alpha, \beta, d)
[Dw_{\lambda}]_{C^{\alpha + \beta - 1}_b}^{\frac{1}{\alpha + \beta}}
\, \| w_{\lambda} \|_{0}^{ \frac{\alpha + \beta -1}{\alpha + \beta}} $, 
we obtain easily that 
  $ \| Dw_{\lambda}\|_{0} 
   < 1/3$ for    $\lambda \ge \lambda_0(d, 
\| b\|_{C^{\beta}_b}, \alpha , \beta) $.
  \end{proof}

\begin{proof}[Proof of Theorem \ref{hp}] As in  the proof of
Theorem \ref{main} we  verify  the assumptions of  Theorem \ref{d32}. Note that Hypothesis \ref{zero} holds since 
$ \int_{\{ |x| > 1  \, \}} \frac{|y|^{\theta}} 
{|y|^{d+ \alpha}} dy < \infty,
 $ for any $\theta \in (0, \alpha).$ 
 In order to check Hypothesis \ref{primo} we argue as in the proof of Theorem \ref{main} (using  Theorems \ref{sil} and
\ref{uno11}; recall that 
 $\alpha = \alpha_0$).  The proof is complete.
\end{proof}

\vskip 8mm

\noindent \textbf{Acknowledgement.}
The author would like to thank the anonymous  referees
 for their
 useful comments and suggestions.

{\small

}

\end{document}